
\documentclass[12pt]{amsart}
\usepackage{latexsym}
\usepackage{amssymb}
\usepackage{amscd}
\usepackage{mathrsfs}
\usepackage[all]{xy}

\setlength{\textwidth}{15.0truecm}
\setlength{\textheight}{22.5truecm}
\setlength{\topmargin}{0mm}
\setlength{\oddsidemargin}{0.3cm}
\setlength{\evensidemargin}{0.3cm}
\renewcommand\a{\alpha}
\renewcommand\b{\beta}
\newcommand\g{\gamma}
\renewcommand\d{\delta}
\newcommand\la{\lambda}
\newcommand\z{\zeta}

\newcommand\s{\sigma}

\newcommand\f{\phi}
\newcommand\vf{\varphi}

\renewcommand\t{\tau}

\newcommand{\vT}{\varTheta}

\newcommand\ve{\varepsilon}

\newcommand\Ql{\bar{\mathbf Q}_l}
\newcommand\BA{\mathbf A}
\newcommand\BP{\mathbf P}
\newcommand\BQ{\mathbf Q}
\newcommand\BF{\mathbf F}
\newcommand\BC{\mathbf C}
\newcommand\BD{\mathbf D}
\newcommand\BE{\mathbf E}
\newcommand\BR{\mathbf R}

\newcommand\BZ{\mathbf Z}

\newcommand\BN{\mathbf N}

\newcommand\BJ{\mathbf J}
\newcommand\bB{\mathbf B}

\newcommand\BU{\mathbf U}
\newcommand\BV{\mathbf V}

\newcommand\Bn{\mathbf n}

\newcommand\Be{\mathbf e}
\newcommand\Bf{\mathbf f}

\newcommand\Bk{\mathbf k}

\newcommand\CB{\mathcal{B}}
\newcommand\ZC{\mathcal{C}}

\newcommand\CQ{\mathcal{Q}}

\newcommand\CV{\mathcal{V}}

\newcommand\CJ{\mathcal{J}}

\newcommand\SC{\mathscr{C}}

\newcommand\SL{\mathscr{L}}

\newcommand\SP{\mathscr{P}}

\newcommand\Fg{\mathfrak g}

\newcommand\iv{^{-1}}

\newcommand\wt{\widetilde}

\newcommand\ol{\overline}
\newcommand\ul{\underline}

\newcommand\Hom{\operatorname{Hom}}

\newcommand\Ind{\operatorname{Ind}}

\newcommand\Res{\operatorname{Res}}
\newcommand\Mod{\operatorname{-Mod}}

\newcommand\gm{\operatorname{{\bf gm}}}
\newcommand\gp{\operatorname{{\bf gp}}}
\newcommand\gpp{\operatorname{-{\bf gp}}}
\newcommand\gmm{\operatorname{-{\bf gm}}}
\newcommand\qdim{\operatorname{qdim}}

\newcommand\Tr{\operatorname{Tr}\,}
\newcommand\ch{\operatorname{ch}}

\newcommand\id{\operatorname{id}}
\newcommand\Id{\operatorname{Id}}
\newcommand\lp{\operatorname{\langle}}
\newcommand\rp{\operatorname{\rangle}}
\newcommand\lpp{\operatorname{\lp\!\!\lp\,}}
\newcommand\rpp{\operatorname{\rp\!\!\rp}}

\newcommand\nat{^{\natural}}

\newcommand\weit{\operatorname{wt}}

\newcommand\soc{\operatorname{soc}}
\newcommand\head{\operatorname{hd}}

\newcommand{\isom}{\,\raise2pt\hbox{$\underrightarrow{\sim}$}\,}
\numberwithin{equation}{section}

\newtheorem{thm}{Theorem}[section]
\newtheorem{lem}[thm]{Lemma}
\newtheorem{cor}[thm]{Corollary}
\newtheorem{prop}[thm]{Proposition}

\def \para#1{\par\medskip\textbf{#1}
              \addtocounter{thm}{1}}

\def \remark#1{\par\medskip\noindent
                \textbf{Remark #1}
                \addtocounter{thm}{1}}

\def \remarks#1{\par\medskip\noindent
                \textbf{Remarks #1}
                \addtocounter{thm}{1}}

\begin{document}
\setlength{\baselineskip}{4.9mm}
\setlength{\abovedisplayskip}{4.5mm}
\setlength{\belowdisplayskip}{4.5mm}
\renewcommand{\theenumi}{\roman{enumi}}
\renewcommand{\labelenumi}{(\theenumi)}
\renewcommand{\thefootnote}{\fnsymbol{footnote}}
\renewcommand{\thefootnote}{\fnsymbol{footnote}}
\allowdisplaybreaks[2]
\parindent=20pt
\medskip
\begin{center}
{\bf Foldings of KLR  algebras}  
\par
\vspace{1cm}
Ying Ma, Toshiaki Shoji and Zhiping Zhou 
\\
\title{}
\end{center}

\begin{abstract}
Let $\BU_q^-$ be the negative half of the quantum group associated to the 
Kac-Moody algebra $\Fg$, and $\ul\BU_q^-$ the quantum group obtained 
by a folding of $\Fg$. Let $\BA = \BZ[q,q\iv]$. 
McNamara showed that $\ul\BU_q^-$ is categorified over a certain extension 
ring $\wt\BA$ of $\BA$, by using the folding theory of KLR algebras.
He posed a question whether $\wt\BA$ coincides with $\BA$ or not. 
In this paper, we give an affirmative answer for this problem. 
\end{abstract}

\maketitle
\pagestyle{myheadings}

\begin{center}
{\sc Introduction}
\end{center}

\par\medskip
Let $X = (I,(\ ,\ ))$ be a Cartan datum, 
and $\BU_q^-$ the negative half of the quantum group associated to $X$.  
Let $Q = \bigoplus_{i \in I}\BZ \a_i$ be the root lattice of $X$,
and set $Q_+ = \sum_{i \in I}\BN\a_i$.
Set $\BA = \BZ[q,q\iv]$, and let ${}_{\BA}\BU_q^-$ be Lusztig's integral
form of $\BU_q^-$, which is an $\BA$-subalgebra of $\BU_q^-$.  
In the case where $X$ is of symmetric type, 
Lusztig constructed in [L1] the canonical basis of $\BU_q^-$, by using 
the geometry of a quiver $\overrightarrow{Q}$ associated to $X$.  More precisely, 
he constructed, for each $\b \in Q_+$, a category $\CQ_{V_{\b}}$ consisting of 
certain semisimple $\Ql$-complexes on the representation space $V_{\b}$ of $\overrightarrow{Q}$ 
associated to $\b$, and showed that the direct sum  
$K(\CQ) = \bigoplus_{\b \in Q_+}K(\CQ_{V_{\b}})$ 
of the Grothendieck group $K(\CQ_{V_{\b}})$ of $\CQ_{V_{\b}}$ has a structure of 
$A$-algebra, which  
is isomorphic to ${}_{\BA}\BU_q^-$.  
The canonical basis of $\BU_q^-$ is obtained from a natural basis of 
$K(\CQ_{V_{\b}})$ coming from simple perverse sheaves in 
$\CQ_{V_{\b}}$. 
\par
Let $\s$ be an admissible diagram automorphism on $X$ (see 1.5), which 
gives a bijection $\s : I \to I$.
Let $J$ be the set of $\s$-orbits in $I$. 
Then $\ul X = (J, (\ ,\ ))$ gives a Cartan datum.
We denote by $\ul\BU_q^-$ the corresponding quantum group.   
In [L2], Lusztig constructed the canonical basis of $\ul\BU_q^-$, 
by extending the discussion in the symmetric case. 
By a suitable choice of the orientation, $\s$ acts on the quiver $\overrightarrow{Q}$, 
and it induces a periodic functor $\s^*$ on $\CQ_{V_{\b}}$ whenever $\b$ is $\s$-stable. 
He defined a new category $\wt\CQ_{V_{\b}}$ attached to $\CQ_{V_{\b}}$ and $\s^*$, 
and a (modified) Grotehndieck group $K(\wt\CQ_{V_{\b}})$. 
Let $Q_+^{\s}$ be the set of $\s$-stable elements in $Q_+$.  Then the direct sum 
$K(\wt\CQ) = \bigoplus_{\b \in Q_+^{\s}}K(\wt\CQ_{V_{\b}})$ has a structure of 
$\BZ[\z_{\Bn}, q, q\iv]$-algebra, where $\Bn$ is the order of $\s$, and $\z_{\Bn}$ 
is a primitive $\Bn$-th root of unity in $\Ql$. 
He proved that a certain $\BA$-subalgebra ${}_{\BA}K(\wt\CQ)$ is isomorphic to 
${}_{\BA}\ul\BU_q^-$.  In that case, a natural basis of ${}_{\BA}K(\wt\CQ)$ gives 
the canonical basis of $\ul\BU_q^-$, up to sign, i.e., gives the canonical signed basis. 
The canonical basis of $\ul\BU_q^-$ is obtained from it by suitably fixing the signs, which 
was done by an algebraic argument using Kashiwara's tensor product rule.  
\par
$K(\CQ)$ (for symmetric $X$) gives a geometric categorification of $\BU_q^-$.  
On the other hand,   
in [KL], [R1], Khovanov-Lauda, and Rouquier introduced the Khovanov-Lauda-Rouquier
algebra $R = \bigoplus_{\b \in Q_+}R(\b)$ (KLR algebra in short) over an algebraically 
closed field $\Bk$, and gave 
a categorification of $\BU_q^-$ (for general $X$) in terms of KLR algebras. 
For each $\b \in Q_+$, let 
$R(\b)\gpp$ be the category of finitely generated graded projective $R(\b)$-modules, 
and $K_{\gp}(\b)$ the Grothendieck group of $R(\b)\gpp$.  They proved that
$K_{\gp} = \bigoplus_{\b \in Q_+}K_{\gp}(\b)$ has a structure of $\BA$-algebra, and 
is isomorphic to ${}_{\BA}\BU_q^-$.    
\par
It is natural to expect a similar discussion as in Lusztig's case will hold 
also for KLR algebras. McNamara [M] developed the theory of foldings for
KLR algebras. Let $X$ and $\ul X$ be as above (for $X$: general type),
and choose $\b \in Q_+^{\s}$.     
By a suitable choice of defining parameters of $R$, $\s$ acts on $R(\b)$, thus
we obtain a periodic functor $\s^*$ on $R(\b)\gpp$. One can define a new category 
$\wt R(\b)\gpp = \SP_{\b}$, and a modified Grothendieck group $K(\SP_{\b})$. 
$K(\SP) = \bigoplus_{\b \in Q_+^{\s}}K(\SP_{\b})$ has a structure of  
$\BZ[\z_{\Bn}, q, q\iv]$-algebra. McNamara constructed a natural basis $\ul\bB$ 
of $K(\SP)$ by using the theory of crystals.  Define an extension $\wt\BA$ of $\BA$ 
as the smallest 
subring $\BZ[\z_{\Bn},q,q\iv]$ containing the structure constants of 
$\ul\bB$ with respect to the product in $K(\SP)$. Hence the $\wt\BA$-span ${}_{\wt\BA}K(\SP)$ 
of $\ul\bB$ gives an $\wt\BA$-algebra.  He proved that ${}_{\wt\BA}K(\SP)$ is 
isomorphic to ${}_{\wt\BA}\ul\BU_q^-$, where ${}_{\wt\BA}\ul\BU_q^-$ is the extension 
of ${}_{\BA}\ul\BU_q^-$ to $\wt\BA$, thus obtained a categorification of $\ul\BU_q^-$.
He showed that $\BA \subset \wt\BA \subset \BZ[\z_{\Bn} + \z_{\Bn}\iv, q,q\iv]$, 
and posed a question whether $\wt\BA$ coincides with $\BA$ or not ([M, Question 11.2]). 
\par
The aim of this paper is to show that this certainly holds (Theorem  3.16). 
The main ingredient for the proof is the following result proved in [MSZ].
We consider the case where the order $\Bn$ of $\s$ is a power of a prime number $\ell$. 
Let $\BF = \BZ/\ell\BZ$ be the finite field of $\ell$-elements. Let 
$\BA' = \BF[q, q\iv] = \BA/\ell \BA$, and consider the $\BA'$-algebras
${}_{\BA'}\BU_q^-$ and ${}_{\BA'}\ul\BU_q^-$, by changing the base ring from $\BA$ 
to $\BA'$.  $\s$ acts naturally on ${}_{\BA'}\BU_q^-$, and let ${}_{\BA'}\BU_q^{-,\s}$
be the fixed point subalgebra by $\s$. In [MSZ, Thm. 3.4, Thm. 4.27], it is proved 
that ${}_{\BA'}\ul\BU_q^-$ is realized as the quotient algebra of ${}_{\BA'}\BU_q^{-,\s}$, 
and that the canonical basis of $\ul\BU_q^-$ is obtained from the $\s$-stable canonical 
basis of $\BU_q^-$, up to sign, through the quotient map.  
This gives an alternate approach for Lusztig's geometric construction of canonical signed 
basis of $\ul\BU_q^-$.  A similar argument also works for the KLR algebras. In this scheme, 
the finite field $\BF$ enters in the theory, but $\z_{\Bn}$ is not involved.     
By comparing this with McNamara's isomorphism, we obtain the required result. 
\par
In the case where $X$ is symmetric, and $R$ is a symmetric type KLR algebra (see 5.1) 
over a field $\Bk$ of characteristic zero, Varagnolo-Vasserot [VV] and Rouquier [R2] proved 
that the natural basis of $K_{\gp}$ gives the canonical basis (which coincides with 
Kashiwara's lower global basis by [GL]) of $\BU_q^-$, by using 
the geometric interpretation of $R$ in terms of the Ext algebra coming from  Lusztig's 
category $\CQ_{V_{\b}}$.  A similar result holds also for $K(\SP)$, namely, 
the basis $\ul\bB$ corresponds, via McNamara's isomorphism, to the canonical basis 
of $\ul\BU_q^-$, which coincides with the lower global basis.  This is stated in 
the last paragraph of [M], with a brief indication for the proof, which is based  
on the geometric argument.  In Theorem 5.13 and Theorem 5.16, 
we give a simple proof of this fact, without using the geometry 
(though assuming the results in [VV], [R2]).       

\section{ Quantum groups }

\para{1.1.}
Let $X = (I, (\ ,\ ))$ be a Cartan datum, where $I$ is a finite set, 
and $(\ ,\ )$ is a symmetric bilinear form on the vector space 
$\bigoplus_{i \in I}\BQ \a_i$ with basis $\a_i$, satisfying the properties 
that 
\begin{enumerate}
\item 
$(\a_i,\a_i) \in 2\BZ_{> 0}$ for any $i \in I$, 
\item $\frac{2(\a_i,\a_{i'})}{(\a_i,\a_i)} \in \BZ_{\le 0}$ 
for any $i \ne i' \in I$.
\end{enumerate}

For $i, i' \in I$, set $a_{ii'} = 2(\a_i, \a_{i'})/(\a_i, \a_i) \in \BZ$. 
$A = (a_{ii'})$ is called the Cartan matrix associated to $X$.  
The Cartan datum is called symmetric if $(\a_i, \a_i) = 2$ for any $i \in I$. 
\par
Let $Q = \bigoplus_{i \in I}\BZ \a_i$ be the root lattice of $X$. 
We set $Q_+ = \sum_{i \in I}\BN \a_i$, and $Q_- = -Q_+$. 

\para{1.2.}
Let $q$ be an indeterminate, and for an integer $n$, a positive integer $m$, 
set
\begin{equation*}
[n] = \frac{q^n - q^{-n}}{q -q\iv}, \quad [m]^! = [1][2]\cdots [m], 
    \quad [0]^! = 1.
\end{equation*}
\par
For each $i \in I$, set $d_i = (\a_i,\a_i)/2 \in \BN$, and 
$q_i = q^{d_i}$. 
We  denote by $[n]_i$ the element obtained from $[n]$ by 
replacing $q$ by $q_i$.  
\par
For the Cartan datum $X$, let $\BU_q^- = \BU_q^-(X)$ be the negative 
half of the quantum group associated to $X$. $\BU_q^-$ is an associative 
algebra over $\BQ(q)$ generated by $f_i \ (i \in I)$ satisfying the $q$-Serre relations 
\begin{equation*}
\tag{1.2.1}
\sum_{k + k' = 1 - a_{ii'}}(-1)^kf_i^{(k)}f_{i'}f_i^{(k')} = 0,
\quad\text{ for $i \ne i' \in I$,}
\end{equation*}
where $f_i^{(n)} = f_i^n/[n]_i^!$.
\par
\par   
Set $\BA = \BZ[q,q\iv]$, and let ${}_{\BA}\BU_q^-$ be Lusztig's integral form 
of $\BU_q^-$, namely, the $\BA$-subalgebra of $\BU_q^-$ generated by 
$f_i^{(n)}$ for $i \in I, n \in \BN$.   
\par
We define a $\BQ$-algebra automorphism, called the bar-involution, 
${}^-: \BU_q^- \to \BU_q^-$ by $\ol q = q\iv$, $\ol f_i = f_i$ for $i \in I$.
We define an anti-algebra automorphism ${}^* : \BU_q^- \to \BU_q^-$
by $f_i^* = f_i$ for any $i \in I$.  

\para{1.3.}
$\BU_q^-$ has a weight space decomposition 
$\BU_q^- = \bigoplus_{\b \in Q_-}(\BU_q^-)_{\b}$, where 
$(\BU_q^-)_{\b}$ is a subspace of $\BU_q^-$ spanned by elements $f_{i_1}\cdots f_{i_N}$
such that $\a_{i_1} + \cdots + \a_{i_N} = -\b$. 
$x \in \BU_q^-$ is called homogeneous with $\weit x = \b$ if $x \in (\BU_q^-)_{\b}$. 
We define a multiplication on $\BU_q^- \otimes \BU_q^-$ by 

\begin{equation*}
\tag{1.3.1}
(x_1\otimes x_2)\cdot (x_1'\otimes x_2') = q^{-(\weit x_2, \weit x_1')}
                                             x_1x_1'\otimes x_2x_2' 
\end{equation*}
where $x_1, x_1', x_2, x_2'$ are homogeneous in $\BU_q^-$. 
Then $\BU_q^-\otimes \BU_q^-$ becomes an associative algebra with respect to
this twisted product. One can define a homomorphism 
$r : \BU_q^- \to \BU_q^-\otimes \BU_q^-$ by $r(f_i) = f_i\otimes 1 + 1 \otimes f_i$ 
for each $i \in I$. It is known 
that there exists a unique bilinear form 
$(\ ,\ )$ on $\BU_q^-$ satisfying the following properties; 
$(1,1) = 1$ and

\begin{equation*} 
\tag{1.3.2}
\begin{aligned}
(f_i, f_j) = \d_{ij}(1 - q_i^2)\iv, \\
(x, y'y'') = (r(x), y'\otimes y''), \\
(x'x'', y) = (x'\otimes x'', r(y)),
\end{aligned}
\end{equation*}
where the bilinear form on $\BU_q^-\otimes \BU_q^-$ is
defined by $(x_1\otimes x_2, x_1'\otimes x_2') = (x_1, x_1')(x_2, x_2')$. 
Thus defined bilinear form is symmetric and non-degenerate.
\par
Following [L2, 1.2.13], 
for each $i$, we define a $\BQ(q)$-linear map ${}_ir : \BU_q^- \to \BU_q^-$
by the condition that  
\begin{equation*}
\tag{1.3.3}
r(x) = f_i\otimes {}_ir(x) + \sum y\otimes z,
\end{equation*}
 where $y$ runs over homogeneous
elements such that $\weit y \ne -\a_i$. 
Since ${}_ir$ coincides with the operator $e_i'$ defined in 
[K1, 3.4], (see [L2, Prop. 3.1.6] and [K1, Lemma 3.4.1]),  
hereafter we write ${}_ir$ as $e_i'$. 
Thus, if we define the action of $f_i$ on $\BU_q^-$ by the left 
multiplication, then $e_i', f_{i'}$ satisfies the $q$-boson relations, as operators 
on $\BU_q^-$,  
\begin{equation*}
\tag{1.3.4}
e_i'f_{i'} = q^{-(\a_i, a_{i'})}f_{i'}e_i' + \d_{ii'}.
\end{equation*}

\para{1.4.}
Let $V$ be a $\BQ(q)$-subspace of $\BU_q^-$.
A basis $\CB$ of $V$ is said to be almost orthonormal if
\begin{equation*}
\tag{1.4.1}
(b, b') \in \begin{cases}
               1 + q\BZ[[q]] \cap \BQ(q) &\quad\text{ if } b = b', \\
               q\BZ[[q]] \cap \BQ(q)     &\quad\text{ if } b \ne b'.
             \end{cases}
\end{equation*}
\par
Recall that $\BA = \BZ[q,q\iv]$.  Let $\BA_0 = \BQ[[q]] \cap \BQ(q)$. 
Set
\begin{equation*}
\tag{1.4.2}
\SL_{\BZ}(\infty) = \{ x \in {}_{\BA}\BU_q^- \mid (x,x) \in \BA_0\}.
\end{equation*}
Then $\SL_{\BZ}(\infty)$ is a $\BZ[q]$-submodule of ${}_{\BA}\BU_q^-$. 
It is known that if $\CB$ is an $\BA$-basis of ${}_{\BA}\BU_q^-$, which is almost orthonormal, 
then $\CB$ gives a $\BZ[q]$-basis of $\SL_{\BZ}(\infty)$ by [L2, Lemma 16.2.5].  
\par
We define a subset $\wt\CB$ of $\BU_q^-$ by 
\begin{equation*}
\tag{1.4.3}
\wt\CB = \{ x \in {}_{\BA}\BU_q^- \mid  \ol x = x, (x, x) \in 1 + q\BZ[[q]]\}. 
\end{equation*}  
If there exists a basis $\CB'$ of $\BU_q^-$ such that 
$\wt\CB = \CB' \sqcup -\CB'$, then $\wt\CB$ is called the canonical signed basis.  
In that case, the basis $\CB'$ is determined uniquely, up to sign, by the condition
(1.4.3). 
Lusztig proved in [L2], by using the geometric theory of quivers, that 
$\wt\CB$ is the canonical signed basis. Kashiwara's global crystal basis 
$\CB$ in [K1] also satisfies the condition that $\wt\CB = \CB \sqcup -\CB$ 
(see [K1, Prop. 5.1.2], but note that the inner products $(\ ,\ )$ on $\BU_q^-$ 
defined by Lusztig and by Kashiwara differ by a scalar factor on each weight space).

\para{1.5.}
A permutation $\s : I \to I$ is called an admissible automorphism on $X$ if
it satisfies the property that  $(\a_i, \a_{i'}) = (\a_{\s(i)}, \a_{\s(i')})$
for any $i,i' \in I$, and that $(\a_i, \a_{i'}) = 0$ if $i$ and $i'$ belong 
to the same $\s$-orbit in $I$. 
Assume that $\s$ is admissible. We denote by $\Bn$ the order of $\s : I \to I$. 
Let $J$ be the set of $\s$-orbits in $I$. 
For each $j \in J$, set $\a_j = \sum_{i \in j}\a_i$, and consider the subspace
$\bigoplus_{j \in J}\BQ \a_j$ of $\bigoplus_{i \in I}\BQ \a_i$, with basis $\a_j$.
We denote by $|j|$ the size of the orbit $j$ in $I$. 
The restriction of the form $(\ ,\ )$ on $\bigoplus_{j \in J}\BQ \a_j$ 
is given by 
\begin{equation*}
(\a_j, \a_{j'})  = \begin{cases} 
                    (\a_i,\a_i)|j|  \quad (i \in j) &\quad\text{ if } j = j', \\
                    \sum_{i \in j, i' \in j'}(\a_i, \a_{i'})
                           &\quad\text{ if } j \ne j'.  
                   \end{cases}
\end{equation*}  
Then $\ul X = (J, (\ ,\ ))$ turns out to be a Cartan datum, which is called the Cartan 
datum induced from $(X, \s)$. $\s$ acts naturally on $Q$, and the set $Q^{\s}$
of $\s$-fixed elements in $Q$ is identified with the root lattice of $\ul X$. 
We have $Q_+^{\s} = \sum_{j \in J}\BN \a_j$. 

\par
Let $\s$ be an admissible automorphism of $X$.  Then $\s$ induces 
an algebra automorphism on $\BU_q^- \to \BU_q^-$ 
by $f_i \mapsto f_{\s(i)}$, which we also denote by $\s$. 
The action of $\s$ leaves ${}_{\BA}\BU_q^-$ invariant.  We denote by
${}_{\BA}\BU_q^{-,\s}$ the fixed point subalgebra of ${}_{\BA}\BU_q^-$.  
\par
Let $\ul X$ be the Cartan datum induced from $(X, \s)$. 
Let $\ul \BU_q^- = \BU_q^-(\ul X)$ the quantum group associated to $\ul X$. 
Thus $\ul\BU_q^-$ is an associative $\BQ(q)$-algebra, with generators 
$f_j$ ($j \in J$), and similar relations as in (1.2.1).
The integral form ${}_{\BA}\ul\BU_q^-$ of $\ul\BU_q^-$ is the $\BA$-subalgebra
generated by $f_j^{(n)}$ ($j \in J, n \in \BZ$).  

\para{1.6.}{\bf Quotient algebras $\BV_q$.} 
In general, let $V$ be a vector space on which $\s$ acts. For each $x \in V$, 
we denote by $O(x)$ the orbit sum of $x$, namely 
$O(x) = \sum_{0 \le i < k}\s^i(x)$, where $k$ is the smallest integer such that 
$\s^k(x) = x$. Hence $O(x)$ is a $\s$-invariant element in $V$.
\par
Let $\s : X \to X$ be as in 1.5, and   
$\Bn$ the order of $\s : I \to I$. In the rest of this section, we assume that
$\Bn$ is a power of a prime number $\ell$.  Let $\BF = \BZ/\ell\BZ$ be the finite field 
of $\ell$ elements, and set $\BA' = \BF[q,q\iv] = \BA/\ell\BA$. 
We consider the $\BA'$-algebra
\begin{equation*}
{}_{\BA'}\BU_q^{-,\s} = \BA'\otimes_{\BA}{}_{\BA}\BU_q^{-,\s}
                     \simeq {}_{\BA}\BU_q^{-,\s}/\ell({}_{\BA}\BU_q^{-,\s}). 
\end{equation*}  
\par  
Let $\BJ$ be an $\BA'$-submodule of ${}_{\BA'}\BU_q^{-, \s}$ generated by 
$O(x)$ for $x \in {}_{\BA'}\BU_q^-$ such that $\s(x) \ne x$.  
Then $\BJ$ is a two-sided ideal of ${}_{\BA'}\BU_q^{-,\s}$. 
We define an $\BA'$-algebra $\BV_q$ as the quotient algebra of ${}_{\BA'}\BU_q^{-,\s}$, 
\begin{equation*}
\tag{1.6.1}
\BV_q = {}_{\BA'}\BU_q^{-,\s}/\BJ.
\end{equation*}
Let $\pi : {}_{\BA'}\BU_q^{-,\s} \to \BV_q$ be the natural projection.  
\par 
For each $j \in J$ and $a \in \BN$, set 
$\wt f_j^{(a)} = \prod_{i \in j}f_i^{(a)}$. 
Since $\s$ is admissible, $f_i^{(a)}$ and $f_{i'}^{(a)}$ commute each other 
for $i,i' \in j$. Hence $\wt f_j^{(a)}$ does not depend on the order of the product, 
and $\wt f^{(a)}_j \in {}_{\BA}\wt\BU_q^{-,\s}$. We denote its image 
in ${}_{\BA'}\BU_q^{-,\s}$ also by $\wt f^{(a)}_j$.  
Thus we can define $g_j^{(a)} \in \BV_q$ by 
\begin{equation*}
\tag{1.6.2}
g^{(a)}_j = \pi(\wt f_j^{(a)}). 
\end{equation*}
In the case where $a = 1$, we set 
$\wt f_j^{(1)} = \wt f_j = \prod_{i \in j}f_i$ 
and $g^{(1)}_j = g_j$. 
\par
We define an ${\BA'}$-algebra ${}_{\BA'}\ul\BU_q^-$ by 
${}_{\BA'}\ul\BU_q^- = \BA'\otimes_{\BA}{}_{\BA}\ul\BU_q^-$. 
\par
The following result was proved in [MSZ].

\begin{thm}[{[MSZ, Thm. 3.4]}]  
The assignment $f_j^{(a)} \mapsto g_j^{(a)}$ ($j \in J$) 
gives an isomorphism $\Phi : {}_{\BA'}\ul\BU_q^- \isom \BV_q$
of $\BA'$-algebras. 
\end{thm}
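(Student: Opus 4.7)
The plan is to prove the theorem in three stages: verify that the assignment $f_j^{(a)} \mapsto g_j^{(a)}$ extends to a well-defined $\BA'$-algebra homomorphism $\Phi$, establish that $\Phi$ is surjective, and deduce injectivity by a weight-by-weight rank comparison. For well-definedness, the $g_j^{(a)}$'s must satisfy the defining relations of ${}_{\BA'}\ul\BU_q^-$: the divided-power identities $g_j^{(a)} g_j^{(b)} = \binom{a+b}{a}_{q_j} g_j^{(a+b)}$ and the $q$-Serre relations for $\ul{X}$. Since admissibility of $\s$ forces $(\a_i,\a_{i'})=0$ whenever $i,i'$ lie in the same orbit $j$, the factors in $\wt f_j^{(a)} = \prod_{i\in j} f_i^{(a)}$ commute pairwise, and a direct computation in ${}_{\BA'}\BU_q^{-,\s}$ gives $\wt f_j^{(a)}\wt f_j^{(b)} = \binom{a+b}{a}_{q_i}^{|j|}\wt f_j^{(a+b)}$ for any $i\in j$. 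Because $|j| = \ell^s$ and the base ring has characteristic $\ell$, the Frobenius identity $h(q_i)^{\ell^s}=h(q_i^{\ell^s})=h(q_j)$ collapses the exponent to $\binom{a+b}{a}_{q_j}$. For the cross-orbit $q$-Serre relations, I would expand $\wt f_j^{(k)}\wt f_{j'}\wt f_j^{(k')}$ as a product of elementary terms $f_i^{(k)}f_{i'}f_i^{(k')}$, apply the $q$-Serre identities of $\BU_q^-$ at each pair $(i,i')\in j\times j'$, and observe that the non-$\s$-fixed contributions collect into orbit sums in $\BJ$ (and so vanish in $\BV_q$), while the $\s$-fixed residue reassembles via a further Frobenius collapse into the Serre relation for $\ul{X}$ governed by $a_{jj'} = 2(\a_j,\a_{j'})/(\a_j,\a_j)$; here one uses the key identity $(\a_j,\a_{j'})=\sum_{i\in j,\,i'\in j'}(\a_i,\a_{i'})$.

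For surjectivity, fix a $\s$-equivariant $\BA$-basis $\bB$ of ${}_{\BA}\BU_q^-$, such as Lusztig's canonical basis. A bookkeeping argument, exploiting that the orbit size $k_x$ of any non-$\s$-fixed element is a positive power of $\ell$ (hence vanishes in $\BA'$), shows that $\BJ$ coincides with the $\BA'$-span of $\{O(b):b\in\bB,\ \s(b)\ne b\}$. Consequently $\BV_q$ is free over $\BA'$ with basis the images of the $\s$-fixed elements of $\bB$, and surjectivity reduces to showing that each such image lies in the subalgebra generated by the $g_j^{(a)}$. This I would establish by induction on weight: for a $\s$-fixed basis element $b$ of weight $-\b$, select $j\in J$ for which one can extract a factor of $\wt f_j^{(a)}$ (using the $\s$-equivariant action of the lowering operators $e_i'$ grouped across the orbit $j$), write $b = \wt f_j^{(a)} b' + (\text{terms of strictly lower weight})$ modulo $\BJ$, and apply the inductive hypothesis.

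For injectivity, $(\BV_q)_{-\b}$ is free over $\BA'$ of rank equal to the number of $\s$-fixed elements of $\bB$ of weight $-\b$, whereas $({}_{\BA'}\ul\BU_q^-)_{-\b}$ is free of rank equal to the size of the canonical basis of $\ul\BU_q^-$ in the same weight. By Lusztig's folding theory for canonical bases ([L2]), these two ranks coincide, and hence the surjection $\Phi$ between free $\BA'$-modules of equal finite rank in each weight is automatically an isomorphism. The principal obstacle in this plan is the cross-orbit $q$-Serre verification: it combines the $q$-Serre identities of $\BU_q^-$ with the characteristic-$\ell$ Frobenius collapse and the vanishing of non-$\s$-fixed orbit sums modulo $\BJ$, and the bookkeeping is intricate because the expansion of $\wt f_j^{(k)}\wt f_{j'}\wt f_j^{(k')}$ mixes commuting and non-commuting factors that must be carefully aligned with the folded Serre exponent $a_{jj'}$.
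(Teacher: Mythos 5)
The paper offers no proof of this statement---it is quoted verbatim from [MSZ, Thm.~3.4]---so your proposal must stand on its own, and it has a genuine gap at its foundation. You propose to obtain $\Phi$ by verifying that the $g_j^{(a)}$ satisfy ``the defining relations of ${}_{\BA'}\ul\BU_q^-$,'' namely the divided-power identities and the $q$-Serre relations. But ${}_{\BA'}\ul\BU_q^- = \BA'\otimes_{\BA}{}_{\BA}\ul\BU_q^-$, and ${}_{\BA}\ul\BU_q^-$ is \emph{defined} as the $\BA$-subalgebra of $\ul\BU_q^-$ generated by the $f_j^{(n)}$; neither it nor its reduction over $\BA'=\BF[q,q\iv]$ is known to be presented by those two families of relations. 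The Serre presentation is a theorem over the field $\BQ(q)$, whereas the divided-power integral form satisfies many further relations (higher-order Serre relations, rank-two commutation formulas among divided powers) that are not formal consequences of the ones you check. Your verification therefore only produces a homomorphism out of some algebra that surjects onto ${}_{\BA'}\ul\BU_q^-$, with no argument that it factors through ${}_{\BA'}\ul\BU_q^-$. This is precisely why a construction via a presentation is avoided in [MSZ]; their route goes through the operators $e_i'$ and the $q$-boson relations (1.3.4), i.e.\ through the bilinear-form characterization of the negative half rather than through generators and relations.

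Beyond this, the step you yourself flag as the principal obstacle would fail as described: $\wt f_j^{(k)}\wt f_{j'}\wt f_j^{(k')}$ does not factor into commuting blocks $f_i^{(k)}f_{i'}f_i^{(k')}$ indexed by pairs $(i,i')\in j\times j'$, since the middle factors $f_{i'}$ do not commute past the outer $f_i^{(k)}$ when $(\a_i,\a_{i'})\ne 0$, and the folded exponent $1-a_{jj'}$ generally differs from $1-a_{ii'}$; so the alternating sum you must kill is not assembled from instances of the Serre identity you know. Your injectivity step is valid as a count but circular in spirit: it invokes Lusztig's geometric bijection $\CB^{\s}\leftrightarrow\ul\CB$ to match ranks, which is essentially the result that Theorem 1.7 (together with [MSZ, Thm.~4.27]) is designed to reprove without geometry (see Remark 1.8 and the Introduction). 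On the positive side, your reduction of $\BJ$ to the $\BA'$-span of the orbit sums $O(b)$ of non-$\s$-fixed canonical basis elements---using that every nontrivial orbit has length a positive power of $\ell$, hence vanishing coefficient on the fixed part in characteristic $\ell$---is correct, identifies exactly where the hypothesis on $\Bn$ enters, and gives the right basis of $\BV_q$; likewise your Frobenius collapse $\binom{a+b}{a}_{q_i}^{|j|}=\binom{a+b}{a}_{q_j}$ over $\BF$ is sound.
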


\remark{1.8} In Theorem 3.4 in [MSZ],  the existence of the canonical 
signed basis of $\BU_q^-$ is assumed.  This certainly holds, by 1.4. 
In [MSZ, Thm. 4.27], 
the canonical signed basis for $\ul\BU_q^-$ 
was constructed from the canonical basis in the symmetric case, 
by making use of Theorem 1.7.  

\para{1.9.}{\bf The dual space $(\BU_q^-)^*$.} 
Let $\BU_q^- = \bigoplus_{\b \in Q^-}(\BU_q^-)_{\b}$ be the weight space 
decomposition of $\BU_q^-$.
We define the graded dual $(\BU_q^-)^*$ of $\BU_q^-$ by 
\begin{equation*}
(\BU_q^-)^* = \bigoplus_{\b \in Q^-}(\BU_q^-)_{\b}^*,
\end{equation*}
where $(\BU_q^-)_{\b}^* = \Hom_{\BQ(q)}((\BU_q^-)_{\b}, \BQ(q))$. 
$f_i^{(n)}$ and $e_i'$ act on $\BU_q^-$, and we define the actions of 
${e_i'}^{(n)}$ and $f_i$ on $(\BU_q^-)^*$ as the transpose of $f_i^{(n)}, e_i'$. 
We write $e_i' = e_i'^{(n)}$ for $n = 1$ (as operators on $(\BU_q^-)^*$). 
\par
${}_{\BA}\BU^-_q$ is decomposed as 
\begin{equation*}
_{\BA}\BU^-_q = \bigoplus_{\b \in Q_-}{}_{\BA}(\BU_q^-)_{\b},
\end{equation*}
where ${}_{\BA}(\BU_q^-)_{\b} = {}_{\BA}\BU_q^- \cap (\BU_q^-)_{\b}$
is a free $\BA$-module. 
We define $_{\BA}(\BU_q^-)^*$ by 
\begin{equation*}
_{\BA}(\BU_q^-)^* = \bigoplus_{\b \in Q_-} {}_{\BA}(\BU_q^-)^*_{\b}, 
\end{equation*}
where $_{\BA}(\BU_q^-)_{\b}^* = \Hom_{\BA}({}_{\BA}(\BU_q^-)_{\b}, \BA)$.
Since $\BQ(q)\otimes_{\BA}{}_{\BA}(\BU_q^-)^* \simeq (\BU_q^-)^*$, 
$_{\BA}(\BU_q^-)^*$ is regarded as an $\BA$-submodule of $(\BU_q^-)^*$,  
and $_{\BA}(\BU_q^-)^*$ gives an $\BA$-lattice of $(\BU_q^-)^*$.
\par
The actions of $f_i^{(n)}, e_i'$ preserve $_{\BA}\BU_q^-$, and 
the actions of ${e_i'}^{(n)}, f_i$ preserve $_{\BA}(\BU_q^-)^*$. 
\par
Recall that $u \mapsto \ol u$ is the bar involution on $\BU_q^-$.
We define the bar involution $\vf \mapsto \ol\vf$ on $(\BU_q^-)^*$ by 
$\ol\vf(\ol u) = \ol{\vf(u)}$ ($\vf \in (\BU_q^-)^*, u \in \BU_q^-$). 

In [K2], Kashiwara introduced the upper global basis $\CB^*$ of $(\BU_q^-)^*$, which 
is characterized by the following properties. 

\begin{thm}  
There exists a unique $\BA$-basis $\CB^*$ of ${}_{\BA}(\BU_q^-)^*$ 
satisfying the following properties. For $b \in \CB^*$, set
\begin{equation*}
\ve_i(b) = \max\{ k \ge 0 \mid e_i'^kb \ne 0 \}.
\end{equation*}
\begin{enumerate}
\item  \ $\CB^*$ has a weight space decomposition 
$\CB^* = \bigsqcup_{\b \in Q_-}\CB^*_{\b}$, where 
$\CB^*_{\b} = \CB^* \cap {}_{\BA}(\BU_q^-)_{\b}^*$.  

\item \ $1^* \in \CB^*$, where $\{ 1^* \} \subset {}_{\BA}(\BU_q^-)^*_0$ 
is the dual basis of $\{ 1\} \subset {}_{\BA}(\BU_q^-)_0$. 

\item \ Assume that $e_i'b \ne 0$.  Then there exists 
a unique $\wt\Be_ib \in \CB^*$ satisfying the following. 
\begin{equation*}
e_i'b = [\ve_i(b)]_i\wt\Be_ib + 
  \sum_{\substack{b' \in \CB^* \\ \ve_i(b') < \ve_i(b)-1}}
     E_{bb'}^{(i)}b' \qquad (E_{bb'}^{(i)} \in qq_i^{1 - \ve_i(b)}\BZ[q]).        
\end{equation*} 

\item \ There exists a unique $\wt\Bf_ib \in \CB^*$ 
satisfying the following.
\begin{equation*}
f_ib = q_i^{-\ve_i(b)}\wt\Bf_ib + 
       \sum_{\substack{ b' \in \bB^* \\ \ve_i(b') < \ve_i(b) + 1}}F^{(i)}_{bb'}b'
      \qquad (F^{(i)}_{bb'} \in qq_i^{-\ve_i(b)}\BZ[q]).
\end{equation*}

\item \ $\wt\Be_i\wt\Bf_i b = b$. 

\item \ $\wt\Bf_i\wt\Be_i b = b$ if $e_i'b \ne 0$. 

\item \ $\ol b = b$. 
\end{enumerate}
\end{thm}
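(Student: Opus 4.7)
The plan is to construct $\CB^*$ as the upper global basis dual to Kashiwara's lower global basis of $\BU_q^-$, and then verify that the listed properties (i)--(vii) hold and characterize it uniquely. First I would invoke Kashiwara's crystal basis theorem for $\BU_q^-$, which produces a lower crystal lattice $L(\infty) \subset \BU_q^-$ and crystal basis $\CB(\infty) \subset L(\infty)/qL(\infty)$ equipped with Kashiwara operators $\wt e_i, \wt f_i$ built from the decomposition $\BU_q^- = \bigoplus_n f_i^{(n)}\ker e_i'$ and the $q$-boson relation (1.3.4). Dualizing through the non-degenerate pairing of 1.3 yields the upper crystal lattice $L^*(\infty) \subset (\BU_q^-)^*$, the dual basis $\CB^*(\infty)$ at $q=0$, and transposed operators $\wt\Be_i, \wt\Bf_i$. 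Kashiwara's lifting lemma, applied to the balanced triple consisting of ${}_{\BA}(\BU_q^-)^*$, $L^*(\infty)$, and its bar-conjugate (with the bar involution of 1.9), then produces a unique bar-invariant $\BA$-basis $\CB^*$ of ${}_{\BA}(\BU_q^-)^*$ lifting $\CB^*(\infty)$. Properties (i), (ii), (v), (vi), (vii) are immediate from the construction: weight preservation and duality of the highest-weight vector $1 \in \CB(\infty)$ give (i) and (ii), the crystal relations on $\CB^*(\infty)$ at $q = 0$ lift to (v) and (vi), and bar-invariance is (vii).

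The substantive content is to verify the triangular formulas (iii) and (iv) with their sharp coefficient bounds $E^{(i)}_{bb'} \in qq_i^{1 - \ve_i(b)}\BZ[q]$ and $F^{(i)}_{bb'} \in qq_i^{-\ve_i(b)}\BZ[q]$. The leading coefficients $[\ve_i(b)]_i$ and $q_i^{-\ve_i(b)}$ arise as the $q \to 0$ limits of the actions of $e_i'$ and $f_i$ on $b \in \CB^*$; these are the duals under the pairing of Kashiwara's formulas for $f_i^{(n)}$ and $e_i$ on the lower crystal basis, with the $q_i$-shifts recording the weight translation and the divided-power normalization carried across the duality. Integrality of $\CB^*$ forces the error terms to lie in $\BZ[q]$, and a downward induction on $\ve_i(b')$ combined with the bar-invariance (vii) then pins their lowest $q$-degree to $qq_i^{1 - \ve_i(b)}$ and $qq_i^{-\ve_i(b)}$ respectively.

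For uniqueness, one argues that any $\BA$-basis of ${}_{\BA}(\BU_q^-)^*$ satisfying (i)--(vii) can be recovered inductively from $1^* \in \CB^*$ through (ii) and (iv): at each step the formula in (iv) determines the next basis element $\wt\Bf_i b$ up to terms of strictly lower $\ve_i$, condition (iii) sharpens this via the dual $e_i'$-action, and (vii) fixes the remaining sign and scalar ambiguity inherent in any bar-invariant lift. The main obstacle in executing this plan is the sharp $q_i$-degree bookkeeping in (iii) and (iv), which is what gives the characterization its rigidity; all other ingredients are essentially formal consequences of Kashiwara's crystal and lifting theorems transported across the pairing of 1.3.
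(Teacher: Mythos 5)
This is Theorem 1.10, which the paper does not prove but simply quotes from Kashiwara [K2]; your outline --- the crystal basis theorem for $\BU_q^-$, dualization to get the upper crystal lattice and the transposed operators $\wt\Be_i,\wt\Bf_i$, the balanced-triple lifting to produce the bar-invariant $\BA$-basis, and then the triangularity estimates (iii)--(iv) followed by an inductive uniqueness argument --- is precisely Kashiwara's construction, i.e.\ the same route as the paper's cited source. The only thin spots are the sharp $q_i$-degree bounds in (iii)--(iv) and the uniqueness step, both of which rest on the balanced-triple rigidity (the error terms must vanish modulo $q$ times the upper crystal lattice) rather than on integrality and bar-invariance alone, but as a plan this is sound.
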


The $\BA$-basis $\CB$ of ${}_{\BA}\BU_q^-$ is defined as the dual basis 
of $\CB^*$.  $\CB$ is called the lower global basis. $\CB$ coincides with 
the global crystal basis given in 1.4.

\para{1.11.}{\bf The dual module $\BV_q^*$.} 
We define ${}_{\BA'}(\BU_q^-)^*$ similarly to ${}_{\BA'}\BU_q^-$.
Then $e'^{(n)}_i$ act on ${}_{\BA'}(\BU_q^-)^*$. 
\par
We define an action of $\s$ on $(\BU_q^-)^*$ by 
$(\s \vf)(x) = \vf(\s\iv(x))$ for $\vf \in (\BU_q^-)^*$ and $x \in \BU_q^-$.
Thus the natural pairing $\lp\ ,\ \rp : \BU_q^- \times (\BU_q^-)^* \to \BQ(q)$
satisfies the property $\lp \s(x), \s(\vf)\rp = \lp x, \vf\rp$. 
By the above discussion, the pairing $\lp\ ,\ \rp$ induces a perfect pairing 
$\lp\ ,\ \rp : {}_{\BA'}\BU_q^- \times {}_{\BA'}(\BU_q^-)^* \to \BA'$, 
which is compatible with the action of $\s$. 
\par
By Theorem 1.10, $\s$ acts on $\CB^*$ as a permutation, 
hence acts on $\CB$ similarly. 
\par
Let ${}_{\BA'}(\BU_q^-)^{*,\s}$ be the submodule of ${}_{\BA'}(\BU_q^-)^*$
of $\s$-fixed elements. 
Let $\BJ^*$ be an $\BA'$-submodule of ${}_{\BA'}(\BU_q^-)^{*,\s}$ 
consisting of $O(x)$ for $x \in {}_{\BA'}(\BU_q^-)^*$ such that $O(x) \ne x$. 
We define $\BV_q^*$ as the quotient $\BA'$-module
\begin{equation*}
\tag{1.11.1}
\BV_q^* = {}_{\BA'}(\BU_q^-)^{*,\s}/\BJ^*.
\end{equation*}
\par
Since 
$\lp {}_{\BA}\BU_q^{-,\s}, \BJ^* \rp 
      = \lp \BJ, {}_{\BA'}(\BU_q^-)^{*,\s} \rp = 0$, 
the pairing $\lp\ ,\ \rp$ induces a pairing 
$\lp\ ,\ \rp : \BV_q \times \BV_q^* \to \BA'$. 
Let $\CB^{\s}$ be the set of $\s$-fixed elements in $\CB$, and 
$\CB^{*,\s}$ the set of $\s$-fixed elements in $\CB^*$. Then 
the image of $\CB^{\s}$ gives a basis of $\BV_q$, and the image of 
$\CB^{*,\s}$ gives a basis of $\BV_q^*$.  
Hence the pairing $\lp\ ,\ \rp : \BV_q \times \BV_q^* \to \BA'$ gives 
a perfect pairing on $\BA'$. 
\par
For each $j \in J$, we define an action of $\wt f_j^{(n)}$ on $\BV_q$ by 
the left multiplication by $g_j^{(n)} = \pi(f_j^{(n)})$ (see (1.6.2)).  
We define an operator $\wt e'^{(n)}_j$ on 
${}_{\BA'}(\BU_q^-)^*$ 
by $\wt e'^{(n)}_j = \prod_{i \in j}e'^{(n)}_i$. 
Then $\wt e'^{(n)}_j$ commutes with $\s$, and induces an action on 
${}_{\BA'}(\BU_q^-)^{*, \s}$. Since $\wt e'^{(n)}_j$ preserves $\BJ^*$, 
$\wt e'^{(n)}_j$ acts on $\BV_q^*$, which we also denote by $\wt e'^{(n)}_j$. 
$\wt e'^{(n)}_j$ coincides with the transpose of the operation $\wt f_j^{(n)}$ on 
$\BV_q$.  
\par
The operators $e'^{(n)}_j$ on ${}_{\BA'}(\ul\BU_q^-)^*$ is defined 
as in the case of ${}_{\BA'}(\BU_q^-)^*$. 
The following result is immediate from  Theorem 1.7.

\begin{prop}  
Let $\Phi^* : \BV_q^* \isom {}_{\BA'}(\ul\BU_q^-)^*$ be the $\BA'$-module isomorphism 
obtained as the transpose 
of the map $\Phi : {}_{\BA'}\ul\BU_q^- \isom \BV_q$.  
Then $\Phi^*$ commutes with the actions $\wt e'^{(n)}_j$  
on $\BV_q^*$ and $e'^{(n)}_j$ on ${}_{\BA'}(\ul\BU_q^-)^*$. 
\end{prop}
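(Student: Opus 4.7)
The plan is to obtain the proposition as a direct consequence of Theorem 1.7 by a standard transpose argument. First, I would unfold the definitions of the two operators involved: by 1.9 applied to the folded Cartan datum $\ul X$, the operator $e'^{(n)}_j$ on ${}_{\BA'}(\ul\BU_q^-)^*$ is the transpose of left multiplication by $f_j^{(n)}$ on ${}_{\BA'}\ul\BU_q^-$; and by the last sentence of 1.11, the operator $\wt e'^{(n)}_j$ on $\BV_q^*$ is the transpose of left multiplication by $g_j^{(n)}$ on $\BV_q$, with respect to the induced perfect pairings.

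Next, I would invoke Theorem 1.7. Since $\Phi$ is an $\BA'$-algebra isomorphism with $\Phi(f_j^{(n)}) = g_j^{(n)}$, the homomorphism property yields the intertwining identity
\begin{equation*}
\Phi(f_j^{(n)} x) = g_j^{(n)}\Phi(x) \qquad (x \in {}_{\BA'}\ul\BU_q^-).
\end{equation*}
Taking transposes with respect to the two perfect pairings and using the defining relation $\lp \Phi(x), \vf \rp = \lp x, \Phi^*(\vf) \rp$ of $\Phi^*$, a short calculation gives
\begin{equation*}
\lp x, \Phi^*(\wt e'^{(n)}_j \vf) \rp = \lp g_j^{(n)}\Phi(x), \vf \rp = \lp \Phi(f_j^{(n)} x), \vf \rp = \lp x, e'^{(n)}_j \Phi^*(\vf) \rp
\end{equation*}
for every $x \in {}_{\BA'}\ul\BU_q^-$ and $\vf \in \BV_q^*$. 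The non-degeneracy of the pairing on ${}_{\BA'}\ul\BU_q^- \times {}_{\BA'}(\ul\BU_q^-)^*$ in its second argument (automatic, since the latter is by definition the graded dual of the former) then yields the desired identity $\Phi^*(\wt e'^{(n)}_j \vf) = e'^{(n)}_j \Phi^*(\vf)$.

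I do not foresee a genuine obstacle in this argument: once Theorem 1.7 is available, the proposition is essentially its formal adjoint. The only points requiring care are that $\wt e'^{(n)}_j$ is well defined on the quotient $\BV_q^*$ (recorded in 1.11 via its preservation of $\BJ^*$) and that it really is the transpose of left multiplication by $g_j^{(n)}$ on $\BV_q$ (also stated explicitly in 1.11); with these two facts in hand, the transpose calculation above is purely formal.
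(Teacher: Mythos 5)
Your argument is correct and is exactly what the paper has in mind: Proposition 1.12 is stated as ``immediate from Theorem 1.7'' precisely because, as you note, both $e'^{(n)}_j$ and $\wt e'^{(n)}_j$ are defined as transposes of left multiplication by $f_j^{(n)}$ and $g_j^{(n)}=\Phi(f_j^{(n)})$ with respect to perfect pairings, so the intertwining follows formally by taking the transpose of the identity $\Phi(f_j^{(n)}x)=g_j^{(n)}\Phi(x)$.
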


\par\bigskip
\section{ KLR algebras }

\para{2.1.}
Let $X = (I, (\,\ ))$ be a Cartan datum, and $\s$ an admissible automorphism on $X$. 
Assume that $\Bk$ is an algebraically closed field.  We define 
a KLR algebra associated to $(X, \s)$ as follows. For each $i, i' \in I$, 
choose  a polynomial $Q_{i,i'}(u,v) \in \Bk[u,v]$ 
satisfying the following properties;
\begin{enumerate}
\item \ $Q_{i,i}(u,v) = 0$,  
\item  \ $Q_{i,i'}(u,v)$ is written as 
$Q_{i,i'}(u,v) = \sum_{p,q}t_{ii';pq}u^pv^q$, where the 
coefficients $t_{ii';pq} \in \Bk$ satisfy the condition 
\begin{equation*}
(\a_i,\a_i)p + (\a_{i'},\a_{i'})q = -2(\a_i,\a_{i'}) 
        \quad\text{ if $t_{ii';pq} \ne 0$. }
\end{equation*}
Moreover, $t_{ii';-a_{ii'},0} \ne 0, t_{ii';0,-a_{ii'}} \ne 0$.  
\item  \ $Q_{i,i'}(u,v) = Q_{i',i}(v,u)$, 
\item \ $Q_{\s(i),\s(i')}(u,v) = Q_{i,i'}(u,v)$.  
\end{enumerate}
\par
For $\b = \sum_{i \in I}n_i\a_i \in Q_+$, we set $|\b| = \sum_in_i$.
For $\b \in Q_+$  such that $|\b| = n$, define $I^{\b}$ by 
\begin{equation*}
I^{\b} = \{ (i_1, \dots, i_n) \in I^n \mid \sum_{1 \le k \le n}\a_{i_k} = \b\}.
\end{equation*}
The KLR algebra $R(\b)$ is an associative $\Bk$-algebra 
defined by the generators, $e(\nu)$ ($\nu = (\nu_1, \dots, \nu_n) \in I^{\b}$), 
$x_k$ $(1 \le k \le n)$, $\tau_k \ (1 \le k < n)$,  with relations
\begin{align*}
\tag{1}
e(\nu)e(\nu') &= \d_{\nu,\nu'}e(\nu), \quad \sum_{\nu \in I^{\b}}e(\nu) = 1, \\
\tag{2}
x_kx_l &= x_lx_k, \quad  x_ke(\nu) = e(\nu)x_k, \\ 
\tag{3}
\tau_ke(\nu) &= e(s_k\nu)\tau_k, \quad 
           \tau_k\tau_l = \tau_l\tau_k, \quad\text{ if } |k - l| > 1, \\
\tag{4}
\tau_k^2e(\nu) &= Q_{\nu_k, \nu_{k+1}}(x_k, x_{k+1})e(\nu), \\
\tag{5}
(\tau_k x_l &- x_{s_k l}\tau_k)e(\nu) = 
                    \begin{cases}
                      -e(\nu)  &\quad\text{ if } l = k, \nu_k = \nu_{k+1}, \\
                       e(\nu)  &\quad\text{ if } l = k+1, \nu_k = \nu_{k+1}, \\
                       0       &\quad\text{ otherwise, }  
                    \end{cases}  \\
\tag{6}
(\tau_{k+1}&\tau_{k}\tau_{k+1} - \tau_{k}\tau_{k+1}\tau_{k})e(\nu)  \\
        &= \begin{cases}
     \displaystyle
     \frac{Q_{\nu_k,\nu_{k+1}}(x_k, x_{k+1}) - Q_{\nu_k,\nu_{k+1}}(x_{k+2}, x_{k+1})}
          {x_k - x_{k+2}}e(\nu)  &\quad\text{ if } \nu_k = \nu_{k+2}, \\
     0                           &\quad\text{ otherwise. }
          \end{cases}
\end{align*}  
(Here $s_k$ ($1 \le k < n$) is a transvection $(k, k+1)$ in the symmetric group
$S_n$. $S_n$ acts on $I^n$ by 
$w : (\nu_1, \dots, \nu_n) \mapsto (\nu_{w\iv(1)}, \dots, \nu_{w\iv (n)})$.)
\par
We define the algebra $R_n = \bigoplus_{|\b| = n}R(\b)$, and the algebra $R$ by
\begin{equation*}   
R = \bigoplus_{n \ge 0}R_n = \bigoplus_{\b \in Q_+}R(\b).
\end{equation*}
Note that the condition (iv) is used in considering the $\s$-setup for 
KLR algebras. The discussion until 2.9 is independent of $\s$, hence the condition 
(iv) is redundant.     
\par
The algebra $R(\b)$ is a $\BZ$-graded algebra, where the degree is 
defined, for $\nu = (\nu_1, \dots, \nu_n) \in I^{\b}$, 
 by 
\begin{equation*}
\deg e(\nu) = 0, \quad \deg x_ke(\nu) = (\a_{\nu_k}, \a_{\nu_k}), \quad 
\deg\tau_ke(\nu) = -(\a_{\nu_k}, \a_{\nu_{k+1}}).
\end{equation*}  
\par
For a graded $R(\b)$-module $M = \bigoplus_{i \in \BZ}M_i$, 
the grading shift by $-1$ is denoted by $qM$, 
namely $(qM)_i = M_{i-1}$. (Here $q$ is the indeterminate given in Section 1).  
There exists an anti-involution $\psi : R(\b) \to R(\b)$, which 
leaves all the generators $e(\nu), x_k$ and $\tau_k$ invariant. 

\para{2.2}
For $m, n \in \BN$,  there exists a $\Bk$-bilinear map
$R_m \times R_n \to R_{m+n}$ defined by 
\begin{align*}
&(x_k,1) \mapsto x_k, \quad (\tau_k,1) \mapsto \tau_k, 
\quad (1, x_k) \mapsto x_{m + k},  \quad (1, \tau_k) \mapsto \tau_{m+k}, \\ 
&(e(\nu), e(\nu')) \mapsto e(\nu,\nu'),
\end{align*} 
where for $\nu \in I^m, \nu' \in I^n$, we write their juxtaposition 
by $(\nu, \nu') \in I^{m+n}$. 
This defines an injective $\Bk$-algebra homomorphism 
$R_m \otimes R_n \to R_{m+n}$. By this map, 
$R_{m+n}$ has a structure of a right $R_m\otimes R_n$-module. 
\par
For an $R_m$-module $M$ and an $R_n$-module $N$, we define an $R_{m +n}$-module
$M\circ N$ by 
\begin{equation*}
M\circ N = R_{m +n}\otimes_{R_m\otimes R_n}M \otimes N.
\end{equation*} 
$M\circ N$ is called the convolution product of $M$ and $N$. 
Assume that $M$ is an $R(\b)$ module, and $N$ is an $R(\g)$-module 
with $|\b| = m, |\g| = n$.  Then $M$ is an $R_m$-module by the projection 
$R_m \to R(\b)$, and similarly, $N$ is an $R_n$-module. The 
$R_{m+n}$-module $M\circ N$ has the property that 
$e(\b + \g)(M \circ N) = M\circ N$, hence $M\circ N$ is regarded as an 
$R(\b + \g)$-module. The $R(\b+\g)$-module $M\circ N$ is defined directly as
\begin{equation*}
M\circ N = R(\b + \g)e(\b, \g)\otimes_{R(\b)\otimes R(\g)}M \otimes N,   
\end{equation*} 
where 
\begin{equation*}
e(\b,\g) = \sum_{\substack{ \nu \in I^{\b}, \nu' \in I^{\g}}}e(\nu,\nu').
\end{equation*}
\par
Let $\b,\g \in Q_+$. For an $R(\b + \g)$-module $M$, 
$e(\b,\g)M$ has a natural structure of $R(\b)\otimes R(\g)$-module.  

\para{2.3.}
Let $R(\b)\Mod$ be the abelian category of graded $R(\b)$-modules. 
Let $R(\b)\gmm$ be the full subcategory of $R(\b)\Mod$ 
consisting of finite dimensional graded $R(\b)$-modules,  
and $R(\b)\gpp$ the full subcategory of $R(\b)\Mod$ 
consisting of finitely generated graded projective 
$R(\b)$-modules.  Thus $R(\b)\gmm$ is an abelian category, and 
$R(\b)\gpp$ is an additive category.  
Let $K_{\gm}(\b)$ be the Grothendieck group of the category $R(\b)\gmm$, and 
$K_{\gp}(\b)$ the Grothendieck group of the category $R(\b)\gpp$. 
Set
\begin{align*}
\tag{2.3.1}
K_{\gm} = \bigoplus_{\b \in Q_+}K_{\gm}(\b), \qquad
K_{\gp} = \bigoplus_{\b \in Q_+}K_{\gp}(\b).
\end{align*}
By the convolution product, $K_{\gm}$ and $K_{\gp}$ are equipped with 
structures of associative algebras over $\BZ$. 
The shift operation $M \mapsto qM$ on the graded module $M$ 
induces an action of $\BA$ on the Grothendieck groups.  Thus 
$K_{\gm}$ and $K_{\gp}$ have the structure of $\BA$-algebras. 

\para{2.4.}
Let $P$ be a finitely generated projective $R(\b)$-module. We define a dual module
$\BD P$ by 
\begin{equation*}
\tag{2.4.1}
\BD P = \Hom _{R(\b)}(P, R(\b)) = \bigoplus_{n \in \BZ}\Hom_{R(\b)}(q^nP, R(\b))_0,
\end{equation*}
where $\Hom_{R(\b)}(-,-)_0$ is the space of degree preserving homomorphisms.
Hence $\BD P$ is a graded $\Bk$-vector space.
$R(\b)$ acts on $\BD P$ by 
\begin{equation*}
\tag{2.4.2}
x(\la)(m) = \la(\psi(x)m)
\end{equation*}
for $x \in R(\b), \la \in \BD P, m \in P$. 
Then $\BD P \in R(\b)\gpp$. 
\par
Let $M$ be a finite dimensional $R(\b)$-module. Define a dual $\BD M$ by
\begin{equation*}
\tag{2.4.3}
\BD M = \Hom_{\Bk}(M, \Bk) = \bigoplus_{n \in \BZ}\Hom_{\Bk}(q^n M, \Bk)_0.
\end{equation*}
Similarly to (2.4.2), $\BD M$ has a structure of $R(\b)$-module, and 
$\BD M \in R(\b)\gmm$. 
\par
Thus $\BD$ gives a contravariant functor $R(\b)\gpp \to R(\b)\gpp$ or
$R(\b)\gm \to R(\b)\gm$ such that $\BD^2 \simeq \Id$. 
\par
The object $M$ in $R(\b)\gpp$ or in $R(\b)\gmm$ is said to be self-dual
if $\BD M \isom M$. 
\par
For $\b \in Q_+$, let $\bB_{\b}$ be the set of $[P]$ in $K_{\gp}(\b)$, where 
$P$ runs over self-dual finitely generated projective indecomposable $R(\b)$-modules.  
Let $\bB^*_{\b}$ be the set of $[L]$ in $K_{\gm}(\b)$, 
where $L$ runs over self-dual finite dimensional 
simple $R(\b)$-modules. Then $\bB_{\b}$ gives an $\BA$-basis of $K_{\gp}(\b)$, and 
$\bB^*_{\b}$ gives an $\BA$-basis of $K_{\gm}(\b)$. We set 
$\bB = \bigsqcup_{\b \in Q_+}\bB_{\b}$, and $\bB^* = \bigsqcup_{\b \in Q_+}\bB^*_{\b}$. 

\para{2.5.}
For any $P \in R(\b)\gpp, M \in R(\b)\gmm$, we define 
\begin{equation*}
\tag{2.5.1}
\lpp [P], [M] \rpp = \qdim_{\Bk}(P^{\psi}\otimes_{R(\b)}M)  
                   = \sum_{d \in \BZ}\dim_{\Bk}(P^{\psi}\otimes_{R(\b)}M )_d q^d.
\end{equation*}
Then this defines a $\BA$-bilinear map $K_{\gp}(\b) \times K_{\gm}(\b) \to \BA$.
Since $\Bk$ is algebraically closed, an irreducible module $L$ is absolutely 
irreducible.   Then for a finitely generated projective indecomposable module $P$ and 
a finite dimensional simple module $L$, we have
\begin{equation*}
\tag{2.5.2}
\lpp [P], [L]\rpp = \begin{cases}
                      1   &\quad\text{ if $P$ is the projective cover of $\BD L$, } \\
                      0   &\quad\text{ otherwise.}
                  \end{cases}
\end{equation*}
Thus $\lpp\ ,\ \rpp : K_{\gp}(\b) \times K_{\gm}(\b) \to \BA$ gives a perfect pairing on $\BA$, 
and $\bB_{\b}$ and $\bB^*_{\b}$ are dual to each other.   
\par
On the other hand, for any $P, Q \in R(\b)\gpp$, we define 
\begin{equation*}
\tag{2.5.3}
([P], [Q]) = \sum_{d \in \BZ}\dim (P^{\psi}\otimes_{R(\b)}Q)_d q^d.
\end{equation*}
This gives a well-defined symmetric bilinear form 
$(\ ,\ ) : K_{\gp}(\b) \times K_{\gp}(\b) \to \BZ((q))$. 

\para{2.6.}
For $i \in I$, let $L_i = \Bk$ be the (graded) simple $R(\a_i)$-module, and
$P_i$ the projective cover of $L_i$.  Then $P_i = \Bk[x_1]e(i) = R(\a_i)$.   
For any $n \ge 1$, set
\begin{equation*}
\tag{2.6.1}
L_i^{(n)} = q_i^{\binom {n}{2}}L_i^{\circ n}, \qquad 
P_i^{(n)} = q_i^{\binom {n}{2}}P_i^{\,\circ n}, 
\end{equation*}
where $L_i^{\circ n} = \underbrace{L_i\circ \cdots \circ L_i}_{n\text{-times}}$, 
and similarly for $P_i^{\circ n}$. 
Then $L_i^{(n)}$ is a simple $R(n\a_i)$-module lying in $R(n\a_i)\gmm$, 
and $P_i^{(n)} \in R(n\a_i)\gpp$ is the projective cover of $L_i^{(n)}$. 

\par
Assume that $\b, \in Q_+$ and $n \in \BN$.  We further assume that
$\b - n\a_i \in Q_+$.  
We define a functor $E_i^{(n)}: R(\b)\gmm \to R(\b-n\a_i)\gmm$ 
by 
\begin{equation*}
\tag{2.6.2}
M \mapsto \bigl(R(\b-n\a_i)\circ P_i^{(n)}\bigr)^{\psi}\otimes_{R(\b)}M.
\end{equation*}
If $n = 1$, we write $E_i^{(n)}$ as $E_i$.  $E_i^{(n)}M$ is also written as
\begin{align*}
E_i^{(n)}M  &= \bigl(R(\b - n\a_i)\otimes_{\Bk}(P_i^{(n)})^{\psi}\bigr)
                    \otimes_{R(\b-n\a_i)\otimes R(n\a_i)}M  \\
            &= \bigl(R(\b - n\a_i)\otimes_{\Bk}(P_i^{(n)})^{\psi}\bigr)
                    \otimes_{R(\b-n\a_i)\otimes R(n\a_i)}e(\b-n\a_i, n\a_i)M.
\end{align*}
The action of $R(\b-n\a_i)\otimes R(n\a_i)$ on $M$ 
is equal to the $R(\b-n\a_i)\otimes R(n\a_i)$-module $e(\b- n\a_i, n\a_i)M$, and 
it acts as 0 outside. 
Since $P_i = R(\a_i)$, we have
$R(\b-\a_i)\circ P_i = R(\b)e(\b-\a_i,\a_i)$.  Hence 

\begin{equation*}
\tag{2.6.3}
E_iM = e(\b-\a_i, \a_i)R(\b)\otimes_{R(\b)}M = e(\b-\a_i,\a_i)M.
\end{equation*}

\par
$E_i^{(n)}$ is an exact functor, and it induces an $\BA$-homomorphism 
\begin{equation*}
{e'}_i^{(n)} : K_{\gm}(\b) \to K_{\gm}(\b- n\a_i). 
\end{equation*}

\par
For $\b \in Q_+, n \in \BN$, define an 
additive functor 
$F_i^{(n)}: R(\b)\gpp \to R(\b + n\a_i)\gpp$ by 
\begin{equation*}
\tag{2.6.4}
P \mapsto P \circ P_i^{(n)}.
\end{equation*}
The functor $F_i^{(n)}$ induces an $\BA$-homomorphism on the Grothendieck groups, 
\begin{equation*}
f_i^{(n)} : K_{\gp}(\b) \to K_{\gp}(\b + n\a_i).
\end{equation*}

It is known that the operators $f_i^{(n)}$ on $K_{\gp}$, and 
$e_i'^{(n)}$ on $K_{\gm}$ 
satisfy the following 
adjunction relation with respect to the pairing $\lpp\ ,\ \rpp$.
(The proof is also given as the special case of Lemma 3.10). 

\begin{equation*}
\tag{2.6.5}
\lpp f_i^{(n)}[P], [M]\rpp = \lpp [P], e'^{(n)}_i[M]\rpp.
\end{equation*} 

The following categorification theorem was proved by 
Khovanov-Lauda [KL], and Rouquier [R1]. 

\begin{thm}[{[KL], [R1]}]  
There exists an isomorphism 
of $\BA$-algebras
\begin{equation*}
\tag{2.7.1}
\wt\vT_0 : {}_{\BA}\BU_q^- \isom K_{\gp}, 
\end{equation*}
which maps $f_i^{(n)}$ to $[P_i^{(n)}]$.  
Moreover, $\wt\vT_0$ is an isometry. 
\end{thm}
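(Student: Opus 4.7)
The plan is to define $\wt\vT_0$ on generators by $f_i^{(n)} \mapsto [P_i^{(n)}]$ and extend multiplicatively, then separately verify well-definedness, bijectivity, and the isometry property. For well-definedness, what must be checked is that the images $[P_i^{(n)}]$ in $K_{\gp}$ satisfy the quantum Serre relations (1.2.1) and the divided-power identity $[P_i^{(n)}] = [P_i]^n/[n]_i^!$. The latter follows from the definition (2.6.1) of $P_i^{(n)}$ together with an explicit decomposition of $P_i^{\circ n}$ as a direct sum of shifts of $P_i^{(n)}$ indexed by a basis of the nilHecke algebra $R(n\a_i)$.

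The hard technical step is verifying the Serre relations. For each pair $i \ne i'$ and each $k + k' = 1 - a_{ii'}$, one computes the convolution $P_i^{(k)} \circ P_{i'} \circ P_i^{(k')}$ by analyzing the $R(k\a_i + \a_{i'} + k'\a_i)$-module structure through the idempotents $e(\nu)$ and the braid operators $\tau_k$. A Mackey-style filtration argument, combined with the defining relations (4)--(6) of $R(\b)$, produces an explicit graded decomposition; the alternating sum over $k$ in (1.2.1) then cancels term by term after collecting signs and shifts via the Pascal identity for $q$-binomials. This is carried out in detail in [KL] using explicit diagrammatics, and in [R1] via a 2-representation theoretic framework; either route involves the same combinatorial core.

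For injectivity and the isometry claim, I would compare the bilinear form on $\BU_q^-$ characterized by (1.3.2) with the pairing on $K_{\gp}$ from (2.5.3). On generators, a direct computation gives $([P_i],[P_i]) = \qdim_\Bk \Bk[x_1] = (1-q_i^2)\iv = (f_i,f_i)$. The compatibility with the coproduct $r$ reduces, via the adjunction (2.6.5) between $F_i^{(n)}$ and $E_i^{(n)}$, to the fact that restriction through $e(\b,\g)$ categorifies $r$. By induction on weight, $\wt\vT_0$ preserves the inner product, and since the form on $\BU_q^-$ is non-degenerate, $\wt\vT_0$ must be injective.

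For surjectivity, one shows that $K_{\gp}$ is generated over $\BA$ by the classes $[P_i]$. Any object of $R(\b)\gpp$ is a direct summand (with graded multiplicities) of $P_{i_1}\circ\cdots\circ P_{i_n}$ for some sequence with $\sum \a_{i_k} = \b$, since the cyclotomic quotients of $R(\b)$ arise as endomorphism rings of such iterated inductions, and the idempotent decomposition of $R(\b)e(\nu)$ recovers all projective indecomposables. Combined with injectivity and the identification of ranks on each weight space (both ${}_{\BA}(\BU_q^-)_\b$ and $K_{\gp}(\b)$ are free $\BA$-modules of rank $|I^\b|/\,|\text{Weyl orbits}|$, matching via PBW-type bases), surjectivity follows. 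The principal obstacle throughout is the Serre-relation verification in Step~2, which is where the specific choice of relations (4)--(6) defining the KLR algebra, and in particular the rational expression in (6), is indispensable.
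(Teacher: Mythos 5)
The paper does not actually prove this theorem: it is imported wholesale from [KL] and [R1], so your sketch has to be measured against those sources rather than against anything internal to the paper. At the level of architecture you follow them correctly: define the map on divided powers, verify the categorified Serre relations by an explicit decomposition of $P_i^{(k)}\circ P_{i'}\circ P_i^{(k')}$, deduce $[P_i]^n=[n]_i^![P_i^{(n)}]$ from the nilHecke structure of $R(n\a_i)$, match $([P_i],[P_i])=\qdim_{\Bk}\Bk[x_1]=(1-q_i^2)\iv$ with $(f_i,f_i)$, use the $\Ind$--$\Res$ adjunction and the Mackey filtration to show that restriction categorifies $r$, and conclude injectivity from non-degeneracy of the form. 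All of that is sound.

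The genuine gap is in your surjectivity step. The (true) observation that every finitely generated projective is a direct summand of some $P_{i_1}\circ\cdots\circ P_{i_n}$ --- which follows simply from $R(\b)e(\nu)\simeq P_{\nu_1}\circ\cdots\circ P_{\nu_n}$; cyclotomic quotients are irrelevant here --- does \emph{not} show that $K_{\gp}$ is generated over $\BA$ by the $[P_i]$: the image of $\wt\vT_0$ is an $\BA$-submodule, and the class of a direct summand of a module whose class lies in a submodule need not itself lie in that submodule. Moreover the rank formula you invoke to close the argument is false: the rank of $K_{\gp}(\b)$ equals the number of shift-classes of projective indecomposables of $R(\b)$, which must be shown to equal $\dim_{\QQ(q)}(\BU_q^-)_{-\b}$ (a Kostant-partition-type number), not $|I^{\b}|$ divided by a count of Weyl orbits (already for $\b=2\a_1+\a_2$ in type $A_2$ the weight space has dimension $2$ while $|I^{\b}|=3$ forms a single orbit). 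Establishing this equality of ranks is precisely the hard half of the theorem. In [KL] it is done on the dual side: one proves that the character map $K_{\gm}(\b)\to\bigoplus_{\nu\in I^{\b}}\BA$, $[M]\mapsto\sum_{\nu}\qdim e(\nu)M$, is injective, embeds $K_{\gm}(\b)$ into the graded dual of $(\BU_q^-)_{-\b}$ compatibly with the operators $e_i'$, and bounds the number of simples by an inductive crystal-operator argument; surjectivity of $\wt\vT_0$ then follows through the perfect pairing $\lpp\ ,\ \rpp$ of (2.5.2). Without some version of this counting/character step your proof does not close.
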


\para{2.8.}
Let $[\Bk]$ be the isomorphism class of the trivial representation 
$\Bk$ of $R(0)$, which gives a basis of $K_{\gm}(0)$ and of $K_{\gp}(0)$. 
Let $1^* \in {}_{\BA}(\BU_q^-)^*_0$ be the dual basis of $1 \in {}_{\BA}(\BU_q^-)_0$. 
As a corollary to Theorem 2.7, we have the following result.

\begin{prop}   
\begin{enumerate}
\item \ 
There exists an isomorphism of $\BA$-modules
 (actually, an anti-algebra isomorphism), 
\begin{equation*}
\wt\vT  : {}_{\BA}\BU_q^- \isom K_{\gp}, 
\end{equation*}
which maps $1 \in {}_{\BA}(\BU_q^-)_0$ to $[\Bk] \in K_{\gp}(0)$.  
$\wt \vT$ commutes with the actions of $f_i^{(n)}$. 
$\wt\vT$ is an isometry with respect to the inner product $(\ , \ )$
on ${}_{\BA}\BU_q^-$ and on $K_{\gp}$. 
\item \
There exists an isomorphism of $\BA$-modules, 
\begin{equation*}
\wt \vT^* : K_{\gm} \isom {}_{\BA}(\BU_q^-)^* 
\end{equation*}
which maps $[\Bk]$ to $1^*$.  Moreover, $\wt \vT^*$ commutes with 
the actions of $e'^{(n)}_i$. 
\end{enumerate}
\end{prop}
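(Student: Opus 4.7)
The plan is to obtain (i) by composing the algebra isomorphism $\wt\vT_0$ of Theorem 2.7 with the anti-algebra automorphism $*$ of ${}_{\BA}\BU_q^-$ from 1.2, and then to obtain (ii) by transposition using the perfect pairings already available on both sides.

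For (i), define $\wt\vT(x) := \wt\vT_0(x^*)$. Since $*$ is an anti-algebra involution and $\wt\vT_0$ is an algebra isomorphism, $\wt\vT$ is an anti-algebra isomorphism of $\BA$-modules. Because $1^* = 1$ and $(f_i^{(n)})^* = f_i^{(n)}$, one reads off $\wt\vT(1) = [\Bk]$ and $\wt\vT(f_i^{(n)}) = [P_i^{(n)}]$ from Theorem 2.7. The intertwining with the $f_i^{(n)}$-actions is then automatic: on ${}_{\BA}\BU_q^-$ the operator $f_i^{(n)}$ is left multiplication, while $F_i^{(n)}$ in (2.6.4) is convolution by $P_i^{(n)}$ on the right, and an anti-homomorphism converts one into the other,
\[
\wt\vT(f_i^{(n)}x) \;=\; \wt\vT(x)\cdot\wt\vT(f_i^{(n)}) \;=\; \wt\vT(x)\circ P_i^{(n)} \;=\; F_i^{(n)}\wt\vT(x).
\]
For the isometry assertion, $\wt\vT_0$ is already an isometry by Theorem 2.7, so it suffices to observe that $*$ is itself an isometry of $(\ ,\ )$ on ${}_{\BA}\BU_q^-$; this follows from the uniqueness of the bilinear form characterized by (1.3.2), together with the fact that those defining relations are manifestly preserved under $*$.

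For (ii), define $\wt\vT^*$ as the transpose of $\wt\vT$ with respect to the perfect pairing $\lpp\ ,\ \rpp : K_{\gp}\times K_{\gm}\to\BA$ of (2.5.2) and the natural pairing $\lp\ ,\ \rp : {}_{\BA}\BU_q^-\times {}_{\BA}(\BU_q^-)^*\to\BA$, i.e.\ by $\lp x, \wt\vT^*([M])\rp = \lpp \wt\vT(x), [M]\rpp$. Perfectness of both pairings, together with bijectivity of $\wt\vT$, ensures that $\wt\vT^*$ is an $\BA$-module isomorphism. The identity $\wt\vT^*([\Bk]) = 1^*$ is checked by testing against the basis of ${}_{\BA}\BU_q^-$: in weight zero one has $\lp 1, \wt\vT^*([\Bk])\rp = \lpp [\Bk],[\Bk]\rpp = 1 = \lp 1, 1^*\rp$, and both sides vanish outside weight zero by weight homogeneity. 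Finally, by 1.9 the operator $e'^{(n)}_i$ on ${}_{\BA}(\BU_q^-)^*$ is by definition the transpose of left multiplication by $f_i^{(n)}$, while the adjunction formula (2.6.5) states that $e'^{(n)}_i$ on $K_{\gm}$ is the transpose of $F_i^{(n)}$ under $\lpp\ ,\ \rpp$. Since (i) shows that $\wt\vT$ intertwines these two multiplication operators, taking transposes yields that $\wt\vT^*$ intertwines $e'^{(n)}_i$ on $K_{\gm}$ with $e'^{(n)}_i$ on ${}_{\BA}(\BU_q^-)^*$. No serious obstacle is anticipated; the only mildly delicate point is the left/right bookkeeping in (i), which is precisely what forces the appearance of~$*$.
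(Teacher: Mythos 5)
Your proposal is correct and follows essentially the same route as the paper: the paper also sets $\wt\vT = \wt\vT_0\circ *$, notes that the anti-involution converts left multiplication by $f_i^{(n)}$ into right convolution with $P_i^{(n)}$, uses $*$-invariance of the inner product for the isometry claim, and defines $\wt\vT^*$ as the transpose, with the intertwining of $e_i'^{(n)}$ coming from the adjunction (2.6.5). The only differences are cosmetic (you verify $\wt\vT^*([\Bk])=1^*$ explicitly and sketch why $*$ preserves the form, both of which the paper leaves implicit).
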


\begin{proof}
We define $\wt\vT : {}_{\BA}\BU_q^- \to K_{\gp}$ by 
$\wt\vT = \wt\vT_0 \circ *$, where $* : {}_{\BA}\BU_q \to {}_{\BA}\BU_q^-$
is the anti-involution on ${}_{\BA}\BU_q^-$ (see 1.2). 
Since the action of $f_i^{(n)}$ on $K_{\gp}$ is defined by the functor 
$F_i^{(n)}: R(\b)\gpp \to R(\b + n\a_i)\gpp, P \mapsto P \circ P_i^{(n)}$, 
it is clear from (2.7.1) that $\wt\vT$ commutes with $f_i^{(n)}$. 
Since the inner product on $\BU_q^-$ is invariant under the action of $*$, 
$\wt\vT$ is an isometry.  Hence (i) holds. 
By using $\lpp\ ,\ \rpp$, we have an isomorphism 
$K_{\gm}(\b) \simeq \Hom_{\BA}(K_{\gp}(\b), \BA)$. Thus one can define 
a map $\wt\vT^*$ as the transpose of $\wt\vT$. 
Then by (2.6.5), 
the map $\wt\vT^*$ commutes with $e_i'^{(n)}$.  Hence (ii) holds. 
\end{proof}

\para{2.10.}
From now on , we consider the diagram automorphism 
$\s: I \to I$. $\s$ acts on $\nu = (\nu_1, \dots, \nu_n) \in I^n$
by $\s\nu = (\s(\nu_1), \dots, \s(\nu_n))$. In view of the condition 
(iv) in 2.1, the assignment 
$e(\nu) \mapsto e(\s(\nu)), x_i \mapsto x_i, \tau_i \mapsto \tau_i$ 
induces an algebra isomorphism $R(\b) \isom R(\s(\b))$.
Thus $\s$ gives a functor $\s^* : R(\s(\b))\Mod \to R(\b)\Mod$, 
where, for an $R(\s(\b))$-module $M$, $\s^*M$ is the pull-back of $M$ by 
the homomorphism  $R(\b) \to R(\s(\b))$.  
We consider the functor $\tau = (\s\iv)^* : R(\b)\Mod \to R(\s(\b))\Mod$ 
for any $\b \in Q_+$. 
Then $\tau$ gives functors $R(\b)\gmm \to R(\s(\b))\gmm$, and 
$R(\b)\gpp \to R(\s(\b))\gpp$. 
Thus $\tau$ induces automorphisms on 
$K_{\gp}$ and on $K_{\gm}$, which we also denote by $\tau$. 
Note that $\tau$ permutes the bases $\bB$ and $\bB^*$. 
\par
Since $\tau(L_i^{(n)}) = L_{\s(i)}^{(n)}, \tau(P_i^{(n)}) = P_{\s(i)}^{(n)}$, 
we see that
\begin{equation*}
\tag{2.10.1}
\tau\circ f_i^{(n)} = f_{\s(i)}^{(n)} \circ \tau  \quad\text{ on } \ K_{\gp}, \qquad 
\tau\circ e'^{(n)}_i = e'^{(n)}_{\s(i)}\circ \tau \quad\text{ on } \ K_{\gm}.
\end{equation*}

It follows that the map $\wt\vT : {}_{\BA}\BU_q^- \isom K_{\gp}$ is 
compatible with the actions of $\s$ and $\tau$.     
Hence the map $\wt\vT^* : K_{\gm} \isom {}_{\BA}(\BU_q^-)^*$ is also
compatible with the actions of $\tau$ and $\s$. 

\para{2.11.}
We follow the setup in 1.6.  Hence we assume that $\Bn$ is a power of 
a prime number $\ell$, and set 
$\BA' = \BF[q,q\iv]$. 
We define ${}_{\BA'}K_{\gp} = \BA'\otimes_{\BA}K_{\gp}$, 
${}_{\BA'}K_{\gm} = \BA'\otimes_{\BA}K_{\gm}$.  Then $\bB$ and $\bB^*$ give 
the $\BA'$-bases of ${}_{\BA'}K_{\gp}$, and ${}_{\BA'}K_{\gm}$, respectively. 
The pairing $\lpp\ ,\ \rpp$ is extended  to a perfect $\BA'$-pairing 
${}_{\BA'}K_{\gp} \times {}_{\BA'}K_{\gm} \to \BA'$.  
\par
Let ${}_{\BA'}K_{\gp}^{\tau}$ be the set of $\tau$-fixed elements in 
${}_{\BA'}K_{\gp}$.
Then ${}_{\BA'}K_{\gp}^{\tau}$ is an $\BA'$-submodule of ${}_{\BA'}K_{\gp}$, and  
$\{ O(b) \mid b \in \bB\}$ gives an $\BA'$-basis of $_{\BA'}K_{\gp}^{\tau}$.
Let $\CJ$ be the subset of ${}_{\BA'}K_{\gp}^{\tau}$ consisting of $O(x)$ for 
$x \in {}_{\BA'}K_{\gp}$ such that $O(x) \ne x$. Then $\CJ$ is an $\BA'$-submodule 
of ${}_{\BA'}K_{\gp}^{\tau}$.  
We define the quotient module $\CV_q$ by
\begin{equation*}
\tag{2.11.1}
\CV_q = {}_{\BA'}K_{\gp}^{\tau}/\CJ.
\end{equation*}

Since the isomorphism $\wt\vT : {}_{\BA}\BU_q^- \isom K_{\gp}$ 
is compatible with the actions of $\s$ and $\tau$ by 2.10, 
it induces an isomorphism ${}_{\BA'}\BU_q^{-,\s} \isom {}_{\BA'}K^{\tau}_{\gp}$, 
which maps $\BJ$ onto $\CJ$.  Thus
$\wt\vT$ induces an isomorphism 
\begin{equation*}
\tag{2.11.2}
\vT : \BV_q \isom \CV_q.
\end{equation*}

\par
For each $j \in J$, let $j = \{ i_1, \dots, i_t\}$. 
We define a functor $\wt F^{(n)}_j : R\gpp \to R\gpp$ 
by  $\wt F^{(n)}_j = F_{i_t}^{(n)}\cdots F_{i_1}^{(n)}$.  
This induces an $\BA'$-homomorphism 
$\wt f_j^{(n)} = f_{i_t}^{(n)}\cdots f_{i_1}^{(n)}$ on ${}_{\BA'}K_{\gp}$.
Since $f^{(n)}_{i_k}$ commute each other by Theorem 2.7, and $\tau$ permutes $f_{i_k}^{(n)}$, 
$\wt f_j^{(n)}$ commutes with $\tau$, and acts on $\CV_q$.   
The map $\vT : \BV_q \to \CV_q$ is compatible 
with the action of $\wt f_j^{(n)}$.
\par
Next consider the $\BA'$-submodule ${}_{\BA'}K_{\gm}^{\tau}$ consisting of $\tau$-fixed
elements. We define $\CJ^*$ as the $\BA'$-submodule of ${}_{\BA'}K_{\gm}^{\tau}$ 
consisting of $O(x)$ for $x \in {}_{\BA'}K_{\gm}$ such that $O(x) \ne x$. 
We define a quotient module $\CV_q^*$ by 
\begin{equation*}
\tag{2.11.3}
\CV_q^* = {}_{\BA'}K_{\gm}^{\tau}/\CJ^*.
\end{equation*}

\par
Similarly to the case of $\wt\vT$, 
the isomorphism $\wt\vT^* : K_{\gm} \isom {}_{\BA}(\wt\BU_q^-)^*$ 
induces an isomorphism ${}_{\BA'}K_{\gm}^{\tau} \isom {}_{\BA'}(\wt\BU_q^-)^{*,\s}$, 
which maps $\CJ^*$ onto $\BJ^*$. 
Thus $\wt\vT^*$ induces an isomorphism 
\begin{equation*}
\tag{2.11.4}
\vT^* : \CV_q^* \isom \BV_q^*.
\end{equation*}   

\par
For $j \in J$, we define a functor $\wt E^{(n)}_j : R\gmm \to R\gmm$ 
by $\wt E_j^{(n)} = E_{i_1}^{(n)}\cdots E_{i_t}^{(n)}$.  This induces
an $\BA'$-homomorphism $\wt e'^{(n)}_j = {e'}_{i_1}^{(n)}\cdots {e'}_{i_t}^{(n)}$, 
commuting with $\tau$.  Thus, it induces an action of $\wt e'^{(n)}_j$ 
on $\CV_q^*$.
The map  $\vT^*$ commutes with the action 
of $\wt{e'}_j^{(n)}$.
\par
Let $\ul{\bB}_{\bullet}$ be the image of $\bB^{\tau}$ onto $\CV_q$, 
and $\ul{\bB}^*_{\bullet}$ the image 
of $\bB^{*,\tau}$ onto $\CV_q^*$. 
Then $\ul\bB_{\bullet}$ gives a basis of $\CV_q$, and 
$\ul{\bB}^*_{\bullet}$ gives a basis of $\CV_q^*$, which are dual to each other. 
\par
Summing up the above discussion,  we obtain the following.

\begin{prop}  
There exist isomorphisms of $\BA'$-modules, 
\begin{equation*}
\vT : \BV_q \isom \CV_q, \qquad \vT^* : \CV_q^* \isom \BV_q^*.
\end{equation*}
\par
The map $\vT$ commutes with the actions of $\wt f_j^{(n)}$, 
and the map $\vT^*$ commutes with the actions of $\wt{e'}_j^{(n)}$.  
\end{prop}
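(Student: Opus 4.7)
The plan is to package Proposition 2.9 together with the $\s$/$\tau$-equivariance observed in 2.10, extend scalars from $\BA$ to $\BA'$, restrict to fixed-point subspaces, and finally pass to quotients by $\BJ$ and $\CJ$ (resp.\ $\BJ^*$ and $\CJ^*$). Each step is a piece of routine functoriality, but the key content lies in identifying the ideal/submodule on the ${}_{\BA'}\BU_q^-$ side with its counterpart on the $K_{\gp}$/$K_{\gm}$ side.

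First I would apply base change $- \otimes_\BA \BA'$ to the isomorphism $\wt\vT$ of Proposition 2.9(i) to obtain an $\BA'$-module isomorphism ${}_{\BA'}\BU_q^- \isom {}_{\BA'}K_{\gp}$ that still intertwines the actions of $f_i^{(n)}$ (and likewise for $\wt\vT^*$). By the conclusion of 2.10, this map intertwines $\s$ on the source with $\tau$ on the target, so it restricts to an isomorphism ${}_{\BA'}\BU_q^{-,\s} \isom {}_{\BA'}K_{\gp}^{\tau}$ between the fixed-point $\BA'$-submodules. The analogous statement for the dual side gives ${}_{\BA'}(\BU_q^-)^{*,\s} \isom {}_{\BA'}K_{\gm}^{\tau}$.

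Second I would verify that under the above identification $\BJ$ maps onto $\CJ$. Since $\wt\vT$ is $\s$-$\tau$-equivariant, it commutes with the orbit-sum operation $O(-)$; that is, $\wt\vT(O(x)) = O(\wt\vT(x))$, and the condition $\s(x)\neq x$ is equivalent to $\tau(\wt\vT(x))\neq \wt\vT(x)$. Hence $\wt\vT$ carries the $\BA'$-span of the generators of $\BJ$ onto the $\BA'$-span of the generators of $\CJ$. Passing to quotients yields the isomorphism $\vT : \BV_q \isom \CV_q$ of (2.11.2). The same reasoning, applied with $\wt\vT^*$, $\BJ^*$, and $\CJ^*$, produces $\vT^* : \CV_q^* \isom \BV_q^*$ of (2.11.4).

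Finally I would check compatibility with the operators. For each $j\in J$ with $j = \{i_1,\dots,i_t\}$, the equivariance in (2.10.1) implies that $\wt\vT$ intertwines left multiplication by $\wt f_j^{(n)} = \prod_{i\in j} f_i^{(n)}$ on ${}_{\BA'}\BU_q^{-,\s}$ with the convolution functor $\wt F_j^{(n)} = F_{i_t}^{(n)}\cdots F_{i_1}^{(n)}$ on ${}_{\BA'}K_{\gp}^{\tau}$ (the product is well defined because the $f_{i_k}^{(n)}$ commute by Theorem 2.7, and independence of order matches the analogous commutativity after reduction). Both $\wt f_j^{(n)}$ and $\wt F_j^{(n)}$ preserve $\BJ$ and $\CJ$ respectively, so the intertwining descends to $\vT$, giving $\vT \circ \wt f_j^{(n)} = \wt f_j^{(n)} \circ \vT$, where on $\BV_q$ this is the left action by $g_j^{(n)}$ as in (1.6.2). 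Dualizing via the pairing $\lpp\ ,\ \rpp$, for which $\wt e'^{(n)}_j$ is adjoint to $\wt f_j^{(n)}$ by the adjunction (2.6.5) iterated over the orbit, yields the parallel statement $\vT^* \circ \wt e'^{(n)}_j = \wt e'^{(n)}_j \circ \vT^*$. The only delicate point — and the only step I would worry about — is confirming that $\BJ$ is exactly the preimage of $\CJ$; however, this is automatic because $\wt\vT$ bijectively matches the two natural $\BA'$-spanning sets $\{O(x): x\in{}_{\BA'}\BU_q^-,\, \s(x)\ne x\}$ and $\{O(y): y\in {}_{\BA'}K_{\gp},\, \tau(y)\ne y\}$.
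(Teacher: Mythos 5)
Your proposal is correct and follows essentially the same route as the paper: the paper's ``proof'' is precisely the discussion in 2.11, namely base-changing $\wt\vT$ and $\wt\vT^*$ from Proposition 2.9, using the $\s$--$\tau$ equivariance of 2.10 to restrict to fixed-point submodules and match $\BJ$ with $\CJ$ (resp.\ $\BJ^*$ with $\CJ^*$), and then descending the orbit-products $\wt f_j^{(n)}$ and $\wt e_j'^{(n)}$ to the quotients. The one point worth making explicit (which both you and the paper leave implicit) is that $\wt f_j^{(n)}$ preserves $\CJ$ because applying it to an orbit sum $O(x)$ with $\tau(x)\ne x$ yields either another such orbit sum or a multiple of an orbit sum by a positive power of $\ell$, which vanishes over $\BF=\BZ/\ell\BZ$.
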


\par\bigskip
\section{Periodic functors associated to KLR algebras }

\para{3.1.}
First we recall the general theory of periodic functors due to 
[L2, 11.1].  
Let $\ZC$ be a linear category over a field $\Bk$, 
namely, a category where the space of morphisms between 
any two objects has the structure of $\Bk$-vector space such that the composition 
of morphisms is bilinear, and that finite direct sums exist.  A functor between 
two linear categories is said ro be linear if it preserves the $\Bk$-vector space structure.
\par
A linear functor $\s^* : \ZC \to \ZC$ is said to be periodic if there exists 
$\Bn \ge 1$ such that $(\s^*)^{\Bn} = \id_{\ZC}$. 
Assume that $\s^*$ is a periodic functor on $\ZC$.  We define a new category 
$\wt\ZC$ as follows. The objects of $\wt\ZC$ are the pairs $(A, \f)$, where 
$A \in \ZC$, and $\f : \s^*A \isom A$ is an isomorphism in $\ZC$ such that the composition
satisfies the relation 
\begin{equation*}
\tag{3.1.1}
\f\circ \s^*\f \circ\cdots\circ (\s^*)^{\Bn-1}\f = \id_A.
\end{equation*}
A morphism from $(A,\f)$ to $(A',\f')$  in $\wt\ZC$ is a morphism 
$f : A \to A'$ in $\ZC$ satisfying the following commutative diagram, 
\begin{equation*}
\tag{3.1.2}
\xymatrix@C=36pt@ R=30pt@ M =8pt{
\s^*A     \ar[r]^-{\s^*f}  \ar[d]^{\f} 
       &  \s^*A ' \ar[d]^-{\f'}    \\  
A  \ar[r]^-{f}  &  A'. 
}
\end{equation*}

The category $\wt\ZC$ is a linear category, hence is an additive category.  
$\wt\ZC$ is called 
the category associated to the periodic functor $\s^*$ on $\ZC$ 

\para{3.2.}
Let $\wt\ZC$ be as in 3.1. 
Let $\Bn$ be the smallest integer such that $(\s^*)^{\Bn} = \id_{\ZC}$.
Assume that $\Bk$ is an algebraically closed field such that
$\ch \Bk$ does not divide $\Bn$. 
Let $\z_{\Bn}$ be a primitive $\Bn$-th root of unity in $\BC$, and fix, once 
for all, a ring homomorphism $\BZ[\z_{\Bn}] \to \Bk$, which maps $\z_{\Bn}$ 
to a primitive $\Bn$-th root of unity in $\Bk$.  
If $(A, \f) \in \wt\ZC$, then $(A, \z_{\Bn}\f) \in \wt\ZC$, which we denote by
$\z_{\Bn}(A,\f)$. 
\par
An object $(A, \f) \in \wt\ZC$ is said to be traceless 
if there exists an object $B \in \ZC$, an integer $t\ge 2$ which is a divisor of $\Bn$ such that 
$(\s^*)^t B \simeq B$, and an isomorphism 
\begin{equation*}
A \simeq B \oplus \s^*B \oplus \cdots \oplus (\s^*)^{t-1}B,
\end{equation*}
where, $\f: \s^*A \isom A$ is given 
by the identity maps $(\s^*)^kB \isom (\s^*)^kB$ ($1 \le k \le t-1$) and 
an isomorphism $(\s^*)^tB \isom B$ on each direct summand, 
under the above isomorphism. 
\par
For the category $\wt\ZC$,  
$K(\wt\ZC)$ is defined as a $\BZ[\z_{\Bn}]$-module generated by 
symbols $[A, \f]$ associated  to the isomorphism class of the object $(A,\f) \in \wt\ZC$,
subject to the relations 
\par\medskip
\begin{enumerate}
\item \ $[X] = [X'] + [X'']$  if $X \simeq X'\oplus X''$, 
\item \ $[A, \z_{\Bn}\f] = \z_{\Bn}[A,\f]$, 
\item \ $[X] = 0$ if $X$ is traceless.
\end{enumerate}
In the case where $\wt\ZC$ is an abelian category, 
the condition (i) is replaced by the condition 
\par\medskip
(i$'$) \ $[X] = [X'] + [X'']$ if there exists a short exact sequence
\begin{equation*}
0 \to X' \to X \to X'' \to 0.
\end{equation*}

$K(\wt\ZC)$ is an analogue of the Grothendieck group.  In fact, if 
$\Bn = 1$, $\wt\ZC = \ZC$, then $K(\wt\ZC)$ coincides with the Grothendieck group 
$K(\ZC)$. We call $K(\wt\ZC)$ the Grothendieck group of $\wt\ZC$.  
\par\medskip

\para{3.3.}
We consider the KLR algebra $R = \bigoplus_{\b \in Q_+}R(\b)$ 
associated to $(X, \s)$ as in 2.1. Let $\Bn$ be the order of $\s$, 
and assume that $\ch \Bk$ does not divide $\Bn$ (here we don't give any 
restriction on $\Bn$).  
Assume that $\b$ is $\s$-stable.  Then by 2.10, we obtain a functor 
$\s^*: R(\b)\Mod \to R(\b)\Mod$. 
We apply the discussion in 3.1, and let $\SC_{\b}$ be 
the category $\wt\ZC$ associated to the periodic functor $\s^*$ on 
$\ZC = R(\b)\Mod$.  
Then as remarked in [M, Lemma 2.2], the category $\SC_{\b}$ is equivalent 
to the category of graded representations of the algebra
\begin{equation*}
\tag{3.3.1}
R(\b)\sharp (\BZ/\Bn\BZ) = R(\b) \otimes_{\Bk}\Bk[\BZ/\Bn \BZ],  
\end{equation*}
where $\Bk[\BZ/\Bn\BZ]$ is the group algebra of $\BZ/\Bn\BZ$, and 
$\Bk[\BZ/\Bn\BZ]$ acts on $R(\b)$ through the action of $\s^*$.
Hence $\SC_{\b}$ is an abelian category.  
We denote by $\SP_{\b}$ the full subcategory of $\SC_{\b}$ consisting 
of finitely generated projective objects in $\SC_{\b}$.  
Let $\SL_{\b}$ be the full subcategory of $\SC_{\b}$
consisting of $(M,\f)$ such that $M $ is a finite dimensional $R(\b)$-module. 
Then $\SP_{\b}$ is an additive category and $\SL_{\b}$ is an abelian category. 
\par
Let $K(\SP_{\b})$ (resp. $K(\SL_{\b})$) be the Grothendieck group of 
the category $\SP_{\b}$ (resp. $\SL_{\b}$) as defined in 3.2.  
$K(\SP_{\b})$ and $K(\SL_{\b})$ have structures of $\BZ[\z_{\Bn}, q, q\iv]$-modules,  
where $q$ acts as the grading shift as in the case of $K_{\gp}(\b)$ or $K_{\gm}(\b)$. 
We set
\begin{equation*}
K(\SP) = \bigoplus_{\b \in Q_+^{\s}}K(\SP_{\b}), \qquad
K(\SL) = \bigoplus_{\b \in Q_+^{\s}}K(\SL_{\b}). 
\end{equation*} 

\para{3.4.}
The contravariant functor $\BD$ on $R(\b)\gmm$ or on $R(\b)\gpp$ given 
in 2.4 can be extended to the case of $\SL_{\b}$ or $\SP_{\b}$ as follows.  
For $(M,\f)$ in $\SP_{\b}$ or in $\SL_{\b}$, we define the dual object by
\begin{equation*}
\BD (M,\f) = (\BD M, \BD(\f)\iv),
\end{equation*}
where $\BD(\f) : \BD M \isom \BD(\s^*M) = \s^*\BD M$. 
It is clear that $\BD$ gives a functor $\SL_{\b} \to \SL_{\b}$.  
It was shown in [M, 5] that $\BD$ gives a functor $\SP_{\b} \to \SP_{\b}$. 
\par
$(M,\f)$ in $\SL_{\b}$ or in $\SP_{\b}$ is said to be self-dual if 
$\BD (M,\f) \simeq (M,\f)$ in that category. 
Note that if $(M,\f)$ is a self-dual object, then $\f$ is uniquely 
determined by $M$ in the case where $\Bn$ is odd, and unique up to
$\pm 1$ in the case where $\Bn$ is even. 
In particular, $(\Bk, \id) \in \SL_0$ is a unique self-dual simple object
if $\Bn$ is odd, and there exist two self-dual simple object 
$(\Bk, \pm \id) \in \SL_0$. We write $[\pm 1] = [\Bk, \pm\id ] \in K(\SL_0)$.  
\par
Let $\wt{\ul\bB}^*_{\b}$ be the set of elements in $K(\SL_{\b})$ 
consisting of isomorphism classes of self-dual objects $(M,\f)$ such that
$M$ is a finite dimensional simple $R(\b)$-module.
It was proved in [M, Thm.10.8], by using the crystal structure of 
$\wt{\ul\bB}^* = \bigsqcup_{\b}\wt{\ul\bB}^*_{\b}$, 
 that there exists 
a $\BZ[\z_{\Bn}, q, q\iv]$-basis $\ul\bB^*_{\b}$ of $K(\SL_{\b})$ 
such that $\ul\bB^*_{\b} \subset \wt{\ul\bB}^*_{\b}$ 
and that $[1] \in \ul\bB^*_0$. 
Also there exists a $\BZ[\z_{\Bn}, q, q\iv]$-basis 
$\ul\bB_{\b}$ of $K(\SP_{\b})$ consisting of isomorphism classes of  
finitely generated projective indecomposable self-dual objects 
in $\SP_{\b}$ which 
are projective cover of elements in $\ul\bB_{\b}^*$. 
We set $\ul\bB = \bigsqcup_{\b \in Q_+^{\s}}\ul\bB_{\b}$, and 
$\ul\bB^* = \bigsqcup_{\b \in Q_+^{\s}}\ul\bB_{\b}^*$. 

\para{3.5.}
We extend the pairing $\lpp\ ,\ \rpp : K_{\gp}(\b) \times K_{\gm}(\b) \to \BA$ 
given in 2.5 to our situation.
We define a bilinear map 
$\lpp\ ,\ \rpp : K(\SP_{\b}) \times K(\SL_{\b})  \to \BZ[\z_{\Bn}, q, q\iv]$ by 

\begin{equation*}
\tag{3.5.1}
\lpp [P, \f], [M, \f']\rpp 
   = \sum_{d \in \BZ}\Tr\bigl(\f\otimes \f', (P^{\psi}\otimes_{R(\b)} M)_d\bigr) q^d.  
\end{equation*}
Here for given $\f : \s^*P \isom P, \f': \s^*M \isom M$, 
$\f\otimes \f'$ is the induced map $\s^*P^{\psi}\otimes \s^*M \to P^{\psi}\otimes M$. 
Since $\s^*P^{\psi}\otimes \s^*M$ is canonically isomorphic to 
$P^{\psi}\otimes M$, 
$\f\otimes \f'$ is regarded as a linear transformation on $P^{\psi}\otimes M$.  
\par
Note that in [M, Lemma 3.1], instead of $\lpp\ ,\ \rpp$, 
 a sesqui-linear form $\lp\ ,\ \rp$ is used, which is defined by 
\begin{equation*}
\lp\, [P, \f], [M, \f']\rp = \sum_{d \in \BZ}
               \Tr(\s_{\f,\f'}, \Hom_{R(\b)}(P, M)_d)q^d.
\end{equation*}
The relation between these forms is given 
by the isomorphism of graded vector spaces,
\begin{equation*}
\tag{3.5.2}
\Hom_{R(\b)}(P, M) \simeq \BD P^{\psi}\otimes_{R(\b)}M.
\end{equation*}
\par
By [M, Thm.11.1], we have the following.
\par\medskip\noindent
(3.5.3) \ The pairing 
$\lpp\ ,\ \rpp : K(\SP_{\b}) \times K(\SL_{\b}) \to \BZ[\z_{\Bn}, q,q\iv]$
gives a perfect pairing.  $\ul\bB$ and $\ul\bB^*$ are dual to each other.  
\par\medskip
The symmetric bilinear form $(\ ,\ ) : K_{\gp}(\b) \times K_{\gp}(\b) \to \BZ((q))$
is also extended to our situation as follows.
For $(P,\f), (Q,\f') \in \SP_{\b}$, set
\begin{equation*}
\tag{3.5.4}
([P,\f], [Q,\f']) = \sum_{d \in \BZ}\Tr(\f\otimes \f', (P^{\psi}\otimes_{R(\b)}Q)_d)q^d.
\end{equation*}
Here $\f\otimes \f'$ gives an isomorphism 
$\s^*P^{\psi} \otimes_{R(\b)} \s^*Q \isom P^{\psi}\otimes_{R(\b)}Q$.  By using the canonical 
isomorphism $\s^*P^{\psi}\otimes_{R(\b)}\s^*Q \simeq P^{\psi}\otimes_{R(\b)}Q$, we regard 
$\f\otimes \f'$ as a linear transformation on $P^{\psi}\otimes_{R(\b)}Q$. 
(3.5.3) induces a symmetric bilinear pairing 
$(\ ,\ ): K(\SP_{\b}) \times K(\SP_{\b}) \to \BZ[\z_{\Bn}]((q))$. 
\para{3.6.}
We extend the convolution product defined in 2.2 to the case of 
$\SC_{\b}$.  Take $(M, \f) \in \SC_{\b}, (N,\f') \in \SC_{\g}$.
Set 
\begin{equation*}
\tag{3.6.1}
(M,\f)\circ (N,\f') = (M\circ N, \f \circ \f'),
\end{equation*}
where $\f\circ \f'$ is given by the composite of the map 
$\s^*M \circ \s^*N  \to M \circ N$ and the canonical isomorphism 
$\s^*(M\circ N) \simeq \s^*M \circ \s^*N$. 
\par
For $\b, \g \in Q_+^{\s}$, the automorphisms $\s$ on $R(\b)$ and $R(\g)$ 
induces an automorphism $\s$ on $R(\b)\otimes R(\g)$ by 
$\s(u\otimes w) = \s(u)\otimes \s(w)$.  Thus one can consider 
the category of graded representations of 
$(R(\b)\otimes R(\g))\sharp(\BZ/\Bn\BZ)$, 
which we denote by $\SC_{\b \sqcup \g}$. 
We can define full subcategories $\SL_{\b \sqcup \g}, \SP_{\b \sqcup \g}$ of 
$\SC_{\b \sqcup \g}$ similarly. 
\par
The convolution product (3.6.1) gives an induction functor 
$\Ind_{\b,\g} : \SC_{\b \sqcup \g} \to \SC_{\b + \g}$.
On the other hand, for each $(M, \f) \in \SC_{\b+\g}$, we define 
\begin{equation*}
\tag{3.6.2}
\Res_{\b,\g}(M,\f) = (e(\b,\g)M, \f'),
\end{equation*}
where $\Res M = e(\b,\g)M$ is 
an $R(\b)\otimes R(\g)$-module given in 2.2.
We have the canonical isomoprhism 
$\s^*(\Res M) \isom \Res(\s^*M)$ since $e(\b,\g)$ is $\s$-invariant, 
which induces the isomorphism  $\f': \s^*(\Res M) \isom \Res M$. 
Thus $\Res_{\b,\g}(M,\f)$ belongs to $\SC_{\b \sqcup \g}$, and   
we obtain the restriction functor 
$\Res_{\b,\g} : \SC_{\b + \g} \to \SC_{\b\sqcup \g}$. 
\par
It is known by [M, Thm. 4.3] that the induction functor and the restriction 
functor form an adjoint pair of exact functors between $\SC_{\b \sqcup \g}$ 
and $\SC_{\b+\g}$. 
\par
We have isomorphisms of $\BZ[\z_{\Bn}, q,q\iv]$-modules
\begin{equation*}
K(\SP_{\b})\otimes_{\BZ[\z_{\Bn}, q^{\pm 1}]}K(\SP_{\g}) \simeq K(\SP_{\b\sqcup \g}), 
\quad
K(\SL_{\b})\otimes_{\BZ[\z_{\Bn}, q^{\pm 1}]}K(\SL_{\g}) \simeq K(\SL_{\b\sqcup \g}), 
\end{equation*} 
by [M, Prop. 4.1], and the induction functors $\Ind_{\b,\g}$ induces homomorphisms 
of Grothendieck groups,
\begin{equation*}
K(\SP_{\b}) \otimes_{\BZ[\z_{\Bn}, q^{\pm 1}]}K(\SP_{\g}) \to K(\SP_{\b + \g}), \quad 
K(\SL_{\b}) \otimes_{\BZ[\z_{\Bn}, q^{\pm 1}]} K(\SL_{\g}) \to K(\SL_{\b + \g}). 
\end{equation*}
It is shown that $K(\SP)$ and $K(\SL)$ turn out to be associative algebras 
over $\BZ[\z_{\Bn}, q^{\pm 1}]$ with respect to this product.  

\para{3.7.}
Let $j = \{ i_1, \dots, i_t \} \in J$, and set  
$\a_j = \a_{i_1} + \cdots + \a_{i_t}$, then $\a_j \in Q_+^{\s}$. 
We consider $R(\a_j)$. 
Let $\nu = (i_1, \dots, i_t)$.
Then $I^{\a_j} = \{ w\nu \mid w \in S_t\}$, and in particular, for any 
$\nu' = (\nu_1', \dots, \nu_t') \in I^{\a_j}$, $\nu_1', \dots, \nu_t'$ 
are all distinct.  
Moreover, $Q_{i,i'}(u,v) \in \Bk^*$ since $(\a_i, \a_{i'}) = 0$ if $i \ne i' \in j$.
Hence by the defining relations on the KLR algebra $R(\a_j)$ im 2.1,  
we have, for $1 \le k < t$, 

\begin{align*}
\tag{3.7.1}
\tau_k^2 e(\nu) \in \Bk^*e(\nu),  
   \qquad  \tau_kx_{\ell} = x_{s_k(\ell)}\tau_k,
   \qquad  \tau_k\tau_{k+1}\tau_k = \tau_{k+1}\tau_k\tau_{k+1}.
\end{align*}

In particular, $\tau_w$ is well-defined for $w \in S_t$. 
Let 
\begin{equation*}
\BP[x_1, \dots, x_t] = \bigoplus_{\nu' \in I^{\a_j}}\Bk[x_1, \dots, x_t]e(\nu')
                     = \bigoplus_{w \in S_t}\Bk[x_1, \dots, x_t]e(w\nu)
\end{equation*}
be the polynomial ring of $R(\a_j)$. 
By the basis theorem, $R(\a_j)$ is a free $\BP[x_1, \dots, x_t]$-module with 
basis $\{ \tau_w \mid w \in S_t\}$, namely, 
\begin{equation*}
\tag{3.7.2}
R(\a_j) = \bigoplus_{w \in S_t}\BP[x_1, \dots, x_t]\tau_w.
\end{equation*}

For any $f \in \Bk[x_1, \dots, x_t]$, we have 
$\tau_w f = r_w(f) \tau_w$, where $r_w \in \Bk[S_t]$ is the element 
corresponding to $w \in S_t$. Thus
$\tau_w\BP[x_1, \dots, x_t] = \BP[x_1, \dots, x_t]\tau_w$.
The left multiplication of $\tau_w$ sends $\BP[x_1, \dots, x_t]\tau_{w'}$ to 
$\BP[x_1, \dots, x_t]\tau_{ww'}$. 

\par
Recall that 
$R(\a_{i_k}) = P_{i_k} = \Bk[x_{i_k}]e(i_k)$ as $R(\a_{i_k})$-modules.
Then 
\begin{align*}
P_{i_1}\circ \cdots \circ P_{i_t} 
       &= R(\a_j)\otimes_{R(\a_{i_1})\otimes \cdots \otimes R(\a_{i_t})}
                 P_{i_1}\otimes \cdots \otimes P_{i_t}  \\
       &\simeq \bigoplus_{w \in S_t}\tau_w\Bk[x_1, \dots, x_t]e(i_1, \dots, i_t)  \\
       &= \bigoplus_{w \in S_t}\Bk[x_1, \dots, x_t]\tau_w e(i_1, \dots, i_t).
\end{align*}

Set $P_j = P_{i_1}\circ \cdots \circ P_{i_t}$.  Then $P_j$ is written as 
$P_j = \bigoplus_{w \in S_t}\Bk[x_1, \dots, x_t][w]$, where 
$[w] = \tau_{w}e(i_1, \dots, i_t)$.  
Thus $P_j$ is a free $\Bk[x_1, \dots x_t]$-module with 
basis $[w]$, where $e(\nu')$ acts on $[w]$ by 1 if $\nu' = w\nu$, and  
by 0 otherwise, $\tau_i$ acts on $[w]$ by $[s_i(w)]$.
\par
We define a quotient module $L_j$ of $P_j$ by 
\begin{equation*}
L_j = P_j/\lp x_kP_j \mid  1 \le k \le t\rp.
\end{equation*} 
Then $L_j \simeq \bigoplus_{w \in S_t}\Bk [w] \simeq L_{i_1}\circ \cdots \circ L_{i_t}$, 
where $x_k$ acts on $[w]$ 
as 0.  Thus $P_j$ and $L_j$ are isomorphic to the ones defined 
in [M, 7]. 
$L_j$ is a simple $R(\a_j)$-module lying in $R(\a_j)\gmm$, and 
$P_j$ is the projective cover of $L_j$ lying in $R(\a_j)\gpp$. 

\par
We have $P_j = R(\a_j)e(\nu)$ with $\nu = (i_1, \dots, i_t)$, and 
\begin{equation*}
\tag{3.7.3}
R(\a_j) = \bigoplus_{\nu' \in I^{\a_j}}R(\a_j)e(\nu') 
        = \bigoplus_{\nu' \in I^{\a_j}}P_{j, \nu'},
\end{equation*}
where $P_{j,\nu'} = \bigoplus_{w \in S_t}\Bk[x_1, \dots, x_t]\tau_we(\nu')$, 
which is isomorphic to $P_j$ as $R(\a_j)$-modules. 

\par
Since $\a_j$ is $\s$-stable, one can consider the category 
$\SC_{\a_j}, \SL_{\a_j}$ and $\SP_{\a_j}$. 
We define $\f_j : \s^*L_j \isom  L_j$ by $\f_j[w] = [\s(w)]$ (here 
$[w]$ is identified with the permutation $(i_1', \dots, i_t')$ of 
$(i_1, \dots, i_t)$ on which $\s$ acts).  We define 
$L(j) = (L_j, \f_j)$, which is a simple object in $\SL_{\a_j}$.  
We define $P(j)$ the projective cover of $L(j)$, which lies in $\SP_{\a_j}$.
$P(j)$ is given as $P(j) = (P_j, \f_j)$, where $\f_j : \s^*P_j \isom P_j$ 
is the extension of $\f_j : \s^*L_j \isom  L_j$.  

\para{3.8.}
For $n \ge 1$, we define $L(j)^{(n)} \in \SL_{n\a_j}$ by 
\begin{equation*}
\tag{3.8.1}
L(j)^{(n)} = q_j^{\binom{n}{2}}L(j)^{\circ n}.
\end{equation*}
By [M, Lemma 7.2], $L(j)^{(n)}$ is a self-dual simple object in $\SL_{n\a_j}$. 
Note that $L(j)^{(n)} = (L_j^{(n)}, \f_{nj})$, 
where $L_j^{(n)} = q_j^{\binom {n}{2}}L_j^{\circ n}$, and 
$\f_{nj} : \s^*L_j^{(n)} \isom L_j^{(n)}$ is induced from $\f_j : \s^*L_j \isom L_j$.  
Let $P(j)^{(n)}$ be the projective cover of $L(j)^{(n)}$.  
Then 
\begin{equation*}
\tag{3.8.2}
P(j)^{(n)} = q_j^{\binom {n}{2}} P(j)^{\circ n}.
\end{equation*}
We have $P(j)^{(n)} = (P_j^{(n)}, \f_{nj})$, where 
$P_j^{(n)} = q_j^{\binom {n}{2}}P_j^{\circ n}$ is the projective cover of $L_j^{(n)}$, 
and $\f_{nj} : \s^*P_j^{(n)} \isom P_j^{(n)}$ is the extension of 
$\f_{nj} : \s^*L_j^{(n)} \isom L_j^{(n)}$. 

\para{3.9.}
Assume that $\b \in Q_+^{\s}$ and $j \in J$. 
We define a functor $F_j^{(n)} : \SC_{\b} \to \SC_{\b + n\a_j}$ by
\begin{equation*}
\tag{3.9.1}
F_j^{(n)} : P \mapsto P \circ P(j)^{(n)}.
\end{equation*}
If we write $P = (P_0, \f)$, then the functor is given by 
\begin{equation*}
F_j^{(n)}P 
    = \bigl(R(\b + n\a_j)\otimes_{R(\b)\otimes R(n\a_j)}
         (P_0 \otimes P_j^{(n)}), \f'\bigr),
\end{equation*}
where $\f'$ is obtained from 
$\f\otimes \f_{nj} : \s^*(P_0)\otimes \s^*(P_j^{(n)}) \to P_0 \otimes P_j^{(n)}$. 
By 3.6,  
$F_j^{(n)}$ gives a functor $\SP_{\b} \mapsto \SP_{\b + n\a_j}$. 
It induces an operator $f_j^{(n)}: K(\SP_{\b}) \to K(\SP_{\b + n\a_j})$.
\par
On the other hand, assume that $\b \in Q_+^{\s}$ such that $\b- n\a_j \in Q_+^{\s}$ for 
$j \in J$. 
We define a functor $E_j^{(n)}: \SL_{\b} \to \SL_{\b - n\a_j}$ as follows; 
write $P(j)^{(n)} = (P_j^{(n)}, \f_{nj})$ as in 3.8, $P_j^{(n)}$ is the projective 
$R(n\a_j)$-module. Let $M = (M_0, \f) \in \SL_{\b}$, where 
$M_0 \in R(\b)\gmm$.  Set 
\begin{equation*}
\tag{3.9.3}
E_j^{(n)} : (M_0, \f) \mapsto 
      \bigl((R(\b-n\a_j)\circ P_j^{(n)})^{\psi}\otimes_{R(\b)}M_0, \f'\bigr)
       = (M_1, \f'), 
\end{equation*}
where $\f'$ is induced naturally from $\f$ and $\f_{nj}$. 
As in 2.6,
$M_0 \mapsto M_1$ is an exact functor.
Hence $E_j^{(n)}$ is an exact functor.  It induces an operator 
${e_j'}^{(n)}: K(\SL_{\b}) \to K(\SL_{\b - n\a_j})$. 
\par 
In the case where $n = 1$, 
for $M = (M_0, \f_0) \in \SC_{\b}$, $E_jM = (\wt E_jM_0, \f)$, where $\wt E_jM_0$
 is given by 
\begin{equation*}
\wt E_jM_0 = (R(\b - \a_j)\otimes_{\Bk} P_j^{\psi})\otimes_{R(\b - \a_j)\otimes R(\a_j)}
          e(\b - \a_j, \a_j)M_0.
\end{equation*}
Note that $P_j = R(\a_j)e(\nu)$ with $\nu = (i_1, \dots, i_t)$ by (3.7.3). 
Thus $P_j^{\psi} = e(\nu)R(\a_j)$, and so
\begin{equation*}
\tag{3.9.4}
\wt E_jM_0 = e(\b - \a_j, \a_{i_1}, \dots, \a_{i_t})M_0
       = E_{i_1}\cdots E_{i_t}M_0,
\end{equation*}
where  
$e(\b - \a_j, \a_{i_1}, \dots, \a_{i_t})$ is defined similarly to 
$e(\b,\g)$ in 2.2, and $E_{i_k}$ is the functor defined in (2.6.3)
\par
The following result is a generalization of (2.6.5).

\begin{lem}  
Assume that $j \in J$, $\b \in Q_+^{\s}$.  Then we have
\begin{align*}
\lpp f_j^{(n)}[P], [M] \rpp &= \lpp [P], e_j'^{(n)}[M]\rpp, 
        \qquad  (P \in \SP_{\b}, M \in \SL_{\b + n\a_j}). \\ 
\end{align*}
\end{lem}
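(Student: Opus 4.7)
The plan is to compute both sides of the claimed identity as $q$-graded traces of induced endomorphisms on two graded vector spaces which are canonically isomorphic via the induction-restriction adjunction of [M, Thm.~4.3], and then to verify that this canonical isomorphism intertwines the two $\sigma$-twists. In the special case $\Bn = 1$, the argument collapses to a proof of (2.6.5).

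Write $P = (P_0, \phi)$ with $P_0 \in R(\b)\gpp$ and $M = (M_0, \phi')$ with $M_0 \in R(\b + n\a_j)\gmm$, and denote by $\f_{nj} : \s^*P_j^{(n)} \isom P_j^{(n)}$ the isomorphism from 3.8. Unpacking the pairing (3.5.1) together with the definitions (3.9.1) and (3.9.3), the left-hand side equals the $q$-graded trace of
$$\Phi_L := \phi \otimes \f_{nj} \otimes \phi' \quad \text{on}\quad V := (P_0 \circ P_j^{(n)})^\psi \otimes_{R(\b + n\a_j)} M_0,$$
while the right-hand side equals the $q$-graded trace of
$$\Phi_R := \phi \otimes \bigl((\id_{R(\b)} \otimes \f_{nj}) \otimes \phi'\bigr) \quad \text{on}\quad W := P_0^\psi \otimes_{R(\b)} \bigl((R(\b) \circ P_j^{(n)})^\psi \otimes_{R(\b + n\a_j)} M_0\bigr).$$

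Next, I would invoke Frobenius reciprocity (which underlies [M, Thm.~4.3]) to produce a canonical isomorphism of graded vector spaces $\th : V \isom W$, built from the presentation $P_0 \circ P_j^{(n)} = R(\b + n\a_j) e(\b, n\a_j) \otimes_{R(\b) \otimes R(n\a_j)} (P_0 \otimes P_j^{(n)})$ and the associativity of tensor products. In the non-equivariant setting (all $\phi$'s replaced by identities) this $\th$ is exactly the isomorphism that yields (2.6.5), so the construction is the expected one.

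The main work is to show that $\th$ is $\s$-equivariant, meaning that it conjugates $\Phi_L$ into $\Phi_R$. The key observation is that, because $\b$ and $n\a_j$ are $\s$-stable, the idempotent $e(\b, n\a_j) \in R(\b + n\a_j)$ is $\s$-invariant; hence $\s^*$ commutes with both $\Ind_{\b, n\a_j}$ and $\Res_{\b, n\a_j}$ and with the unit and counit of their adjunction. Spelling this out produces a commutative square
$$\xymatrix@C=36pt@R=26pt{
\s^*V \ar[r]^-{\s^*\th} \ar[d]_-{\Phi_L} & \s^*W \ar[d]^-{\Phi_R} \\
V \ar[r]^-{\th} & W,
}$$
which means that the degree-$d$ part of $\th$ conjugates $(\Phi_L)_d$ to $(\Phi_R)_d$, so $\Tr((\Phi_L)_d, V_d) = \Tr((\Phi_R)_d, W_d)$ for every $d$; summing over $d$ with the $q^d$-weighting gives the desired identity.

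The main obstacle will be the last step: identifying unambiguously the $\s$-equivariant structures on the nested tensor products and checking that the canonical adjunction isomorphism really does fit in the claimed commuting square, being careful that the grading shift by $q_j^{\binom{n}{2}}$ appearing in the definition (3.8.2) of $P(j)^{(n)}$ and the corresponding shift on $L(j)^{(n)}$ are handled consistently on both sides. The non-equivariant backbone (the existence and naturality of $\th$) is a standard Frobenius-reciprocity calculation, so all novelty is isolated to this $\s$-compatibility verification.
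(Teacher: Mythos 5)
Your proposal is correct and follows essentially the same route as the paper: the paper's proof establishes exactly your isomorphism $\th$ (its display (3.10.1), identifying $(\wt F_j^{(n)}P_0)^{\psi}\otimes_{R(\b+n\a_j)}M_0$ with $P_0^{\psi}\otimes_{R(\b)}\wt E_j^{(n)}M_0$ via associativity of the tensor products defining induction) and then observes that the $\s$-twists induced by $\f$, $\f_{nj}$, $\f'$ on the two sides correspond under it, so the graded traces agree. The only difference is one of emphasis: the paper asserts the $\s$-equivariance of the canonical isomorphism in a single sentence, whereas you correctly isolate it as the step requiring verification.
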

\begin{proof}
We define functors
\begin{align*}
 \wt F_j^{(n)} &: R(\b)\gpp \to R(\b + n\a_j)\gpp, \quad 
P_0 \mapsto P_0 \circ P_j^{(n)}, \\
\wt E_j^{(n)} &: R(\b+ n\a_j)\gmm \to R(\b)\gmm, \quad
  M_0 \mapsto (R(\b)\circ P_j^{(n)})^{\psi}\otimes_{R(\b+n\a_j)}M_0.  
\end{align*} 

Let $P = (P_0, \f) \in \SP_{\b}$, $M = (M_0, \f') \in \SL_{\b + n\a_j}$. 
We have
\begin{align*}
\tag{3.10.1}
(\wt F_j^{(n)}&P_0)^{\psi}\otimes_{R(\b + n\a_j)}M_0   \\ 
            &\simeq \biggl(P_0^{\psi}\otimes_{R(\b)}(R(\b)
     \otimes_{\Bk}(P_j^{(n)})^{\psi})_{R(\b)\otimes R(n\a_j)}
        R(\b + n\a_j)\biggr)\otimes_{R(\b + n\a_j)}M_0 \\
              &= P_0^{\psi}\otimes_{R(\b)}\wt E_j^{(n)}M_0.
\end{align*}
Let $Z$ be the left hand side of (3.10.1). 
The isomorphisms $\f : \s^*P_0 \isom P_0, \f': \s^*M_0 \isom M_0$ induce 
the isomorphism $\s^*Z \isom Z$, which coincides with the isomorphism 
$\s^*(P_0^{\psi}\otimes_{R(\b)}\wt E_j^{(n)}M_0) 
    \isom P_0^{\psi}\otimes_{R(\b)}\wt E_j^{(n)}M_0$ induced from $\f$ and $\f'$, 
through the isomorphisms in (3.10.1).  
Thus the lemma is proved. 
\end{proof}

\para{3.11.}
As discussed in 3.6, $K(\SP)$ is an associative algebra over 
$\BZ[\z_{\Bn}, q, q\iv]$ with the basis $\ul\bB$. 
We define $\wt\BA$ as the smallest subring of $\BZ[\z_{\Bn}, q,q\iv]$
containing $\BA$  
such that all the structure constants of the algebra $K(\SP)$ with respect to 
the basis $\ul\bB$ lie in $\wt\BA$. 
\par
Let ${}_{\wt\BA}K(\SP)$ be the $\wt\BA$-submodule of $K(\SP)$ spanned 
by $\ul\bB$. Then ${}_{\wt\BA}K(\SP)$ is the $\wt\BA$-subalgebra of $K(\SP)$. 
Set ${}_{\wt\BA}\ul\BU_q^- = \wt\BA \otimes_{\BA}\ul\BU_q^-$.  
The following result was proved by McNamara [M]. 
\begin{thm}[{[M, Thm. 6.1]}]  
There exists a unique $\wt\BA$-algebra isomorphism 
\begin{equation*}
\tag{3.12.1}
\g_0 : {}_{\wt\BA}\ul\BU_q^- \isom {}_{\wt\BA}K(\SP)
\end{equation*}
such that $\g_0(f_j^{(n)}) = [P(j)^{(n)}]$. 
Moreover, $\g_0$ is an isometry. 
\end{thm}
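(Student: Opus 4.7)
The plan is to adapt the proof of Theorem 2.7 to the equivariant setting of $\SP$. Uniqueness of $\g_0$ is clear, since its values on the divided-power generators $f_j^{(n)}$ are prescribed. For existence, the central task is to verify that the classes $[P(j)^{(n)}]$ satisfy the $q$-Serre relations of $\ul\BU_q^-$ in $K(\SP)$:
\begin{equation*}
\sum_{k+k'=1-\ul a_{jj'}}(-1)^k [P(j)^{(k)}]\,[P(j')]\,[P(j)^{(k')}] = 0,
\end{equation*}
where $\ul a_{jj'}$ is the Cartan entry for $\ul X$. My approach would be to verify this directly in $K(\SP)$ by producing, for each pair $j \ne j' \in J$, an equivariant filtration on the iterated convolutions whose associated graded layers cancel in alternating pairs, or else are traceless in the sense of 3.2. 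The existence of such a filtration can be reduced to the non-equivariant setting of $R$-modules, where the analogous identity for the orbit sums $\wt f_j^{(n)}$ holds in $K_{\gp}$ by Theorem 2.7 together with the commutativity of $f_i^{(n)}$ within each $\s$-orbit. The $\s$-action is then tracked through the filtration, and the residual cyclotomic traces that appear produce coefficients in $\wt\BA$.

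Granting the Serre relations, we obtain a well-defined $\wt\BA$-algebra homomorphism $\g_0 : {}_{\wt\BA}\ul\BU_q^- \to {}_{\wt\BA}K(\SP)$ sending $f_j^{(n)} \mapsto [P(j)^{(n)}]$. For surjectivity, McNamara's construction recalled in 3.4 shows that each element of $\ul\bB$ appears as an indecomposable summand of some iterated convolution $P(j_1)^{(n_1)}\circ\cdots\circ P(j_r)^{(n_r)}$, hence lies in the image of $\g_0$; since $\ul\bB$ is an $\wt\BA$-basis of ${}_{\wt\BA}K(\SP)$, surjectivity follows. For injectivity, I would dualize via the perfect pairing (3.5.3) to obtain a transpose map $K(\SL) \to {}_{\wt\BA}(\ul\BU_q^-)^*$ intertwining the operators $e_j'^{(n)}$; combined with Theorem 1.10 applied to $\ul\BU_q^-$ and McNamara's crystal-theoretic description [M, Thm. 10.8] of $\ul\bB^*$, this transpose is bijective, forcing $\g_0$ to be an isomorphism. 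The isometry is then verified by computing inner products on the generators using (3.5.4) and (1.3.2), and propagating to all of $\ul\BU_q^-$ by induction on weight via the adjunction of Lemma 3.10.

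The main obstacle is the verification of the Serre relations for $[P(j)^{(n)}]$ in $K(\SP)$. While the $K_{\gp}$-level vanishing is accessible via Theorem 2.7 and the orbit-sum combinatorics, promoting it to an equivariant identity requires careful bookkeeping of the $\s$-action through each KLR convolution, and in particular a control on which equivariant summands become traceless in the sense of 3.2. This is precisely the step in which the ring $\wt\BA$ is forced to appear: direct computation of $\z_{\Bn}$-twisted traces in (3.5.1) and (3.5.4) produces structure constants that a priori lie only in $\BZ[\z_{\Bn}, q, q\iv]$, and no integrality over $\BA$ can be assumed during the intermediate computations, which is why the theorem must be stated over $\wt\BA$ rather than $\BA$.
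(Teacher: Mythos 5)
This theorem is not proved in the paper at all: it is imported verbatim from McNamara as [M, Thm.\ 6.1] and used as a black box, so there is no internal proof to compare your sketch against. Judged on its own merits, your overall architecture (well-definedness, then surjectivity via $\ul\bB$, then injectivity via the pairing with $K(\SL)$, then the isometry) is the right shape, but the central step contains a genuine error.

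The gap is in your reduction of the folded Serre relations to the non-equivariant level. You assert that ``the analogous identity for the orbit sums $\wt f_j^{(n)}$ holds in $K_{\gp}$ by Theorem 2.7 together with the commutativity of $f_i^{(n)}$ within each $\s$-orbit.'' This is false. Under the isomorphism $K_{\gp} \simeq {}_{\BA}\BU_q^-$ of Theorem 2.7, the element $\sum_{k+k'=1-\ul a_{jj'}}(-1)^k\wt f_j^{(k)}\wt f_{j'}\wt f_j^{(k')}$ is a $\s$-invariant element of ${}_{\BA}\BU_q^-$ that is in general \emph{nonzero}: the Cartan entry $\ul a_{jj'}$ of the folded datum differs from the entries $a_{ii'}$ of $X$, and the products $\prod_{i\in j}f_i^{(n)}$ do not satisfy the Serre relations of $\ul X$ inside $\BU_q^-$. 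The failure of precisely this identity is the reason the entire quotient construction of 1.6 exists (Theorem 1.7 realizes ${}_{\BA'}\ul\BU_q^-$ only after passing to the quotient $\BV_q$ of ${}_{\BA'}\BU_q^{-,\s}$ by the ideal $\BJ$ of orbit sums, and only over $\BF[q,q\iv]$ with $\BF=\BZ/\ell\BZ$), and likewise the reason the traceless objects must be killed in $K(\SP)$. So there is nothing at the level of $R\gpp$ to which the equivariant filtration can be ``reduced''; the vanishing happens only in $K(\SP)$, where the offending summands are traceless, and establishing that is the actual content of the theorem rather than bookkeeping on top of Theorem 2.7. A further, smaller point: your surjectivity step invokes [M, Thm.\ 10.8] (the crystal-theoretic description of $\ul\bB$), which in McNamara's development is logically downstream of, or intertwined with, Theorem 6.1, so one must be careful about circularity. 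The route actually compatible with the statement ``$\g_0$ is an isometry'' is the Khovanov--Lauda one: compute $([P(j)],[P(j')])$ via (3.5.4) and the Mackey filtration, match it with $(f_j,f_{j'})=\d_{jj'}(1-q_j^2)\iv$ for the folded datum, and use the characterization of $\ul\BU_q^-$ as the quotient of the free algebra by the radical of the form to get well-definedness without ever exhibiting a categorified Serre relation.
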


The following result is a generalization of Proposition 2.9.

\begin{prop}  
\begin{enumerate}
\item 
Consider the actions of $f_j^{(n)}$ on $K(\SP)$ defined in (3.9.1), 
and on $\ul\BU_q^-$ by the left multiplication. 
Then there exists an isomorphism of $\wt\BA$-modules (actually 
anti-algebra isomorphism) 
\begin{equation*}
\tag{3.13.1}
\g : {}_{\wt\BA}\ul\BU_q^- \isom {}_{\wt\BA}K(\SP)
\end{equation*} 
such that $\g$ maps $1 \in {}_{\wt\BA}(\ul\BU_q^-)_0$ to 
$[\Bk, \id] \in {}_{\wt\BA}K(\SP_0)$, commuting with $f_j^{(n)}$.
Moreover, $\g$ is an isometry. 
\item  Consider the actions of $e_j'^{(n)}$ on $K(\SL)$ defined in (3.9.2), 
and on $(\ul\BU_q^-)^*$ as in 1.9.
Then there exists an isomorphism of 
$\wt\BA$-modules,
\begin{equation*}
\tag{3.13.2}
\g^* : {}_{\wt\BA}K(\SL) \isom {}_{\wt\BA}(\ul\BU_q^-)^*
\end{equation*} 
such that $\g^*$ maps $[\Bk,\id] \in {}_{\wt\BA}K(\SL_0)$ to $1^* \in {}_{\wt\BA}(\ul\BU_q^-)^*_0$, 
commuting with $e_j'^{(n)}$. 
\end{enumerate}
\end{prop}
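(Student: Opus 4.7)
The plan is to mimic the proof of Proposition 2.9, with McNamara's isomorphism $\g_0$ of Theorem 3.12 playing the role of $\wt\vT_0$, and working throughout over the subring $\wt\BA$ of $\BZ[\z_{\Bn},q,q\iv]$.

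For (i), I would define an antialgebra automorphism $*$ on ${}_{\wt\BA}\ul\BU_q^-$ as the $\wt\BA$-linear extension of the antialgebra automorphism of ${}_{\BA}\ul\BU_q^-$ fixing every generator $f_j$ ($j\in J$); this is the analog, for the Cartan datum $\ul X$, of the involution introduced in 1.2. Set $\g = \g_0\circ *$. By construction, $\g$ is a $\wt\BA$-module isomorphism and an antialgebra homomorphism, and it sends $1$ to $\g_0(1)=[\Bk,\id]$. Compatibility with $f_j^{(n)}$ is immediate: since $f_j^{(n)}$ acts on $K(\SP)$ by right convolution with $[P(j)^{(n)}]$ (see (3.9.1)) and $(f_j^{(n)})^*=f_j^{(n)}$, we have
\begin{equation*}
\g(f_j^{(n)}x) \;=\; \g_0\bigl(x^*f_j^{(n)}\bigr) \;=\; \g_0(x^*)\cdot\g_0(f_j^{(n)}) \;=\; \g(x)\cdot[P(j)^{(n)}] \;=\; f_j^{(n)}\g(x).
\end{equation*}
The isometry claim follows because the inner product on $\ul\BU_q^-$ is $*$-invariant (a standard feature of the form on the negative half, parallel to the argument used for $\wt\vT$ in 2.9(i)) and $\g_0$ is an isometry by Theorem 3.12.

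For (ii), I would construct $\g^*$ as the transpose of $\g$ under the natural dual pairings. On the categorical side, (3.5.3) says that $\ul\bB$ and $\ul\bB^*$ are dual bases with respect to $\lpp\ ,\ \rpp$; since the pairing values on basis pairs lie in $\BZ\subset\wt\BA$, the pairing restricts to a perfect $\wt\BA$-pairing between ${}_{\wt\BA}K(\SP)$ and ${}_{\wt\BA}K(\SL)$ (with the latter defined as the $\wt\BA$-span of $\ul\bB^*$). On the quantum-group side, the pairing between ${}_{\BA}\ul\BU_q^-$ and ${}_{\BA}(\ul\BU_q^-)^*$ is perfect over $\BA$ by the very definition of the graded dual in 1.9, and hence extends to a perfect $\wt\BA$-pairing after base change. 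Define $\g^*$ to be the $\wt\BA$-transpose of $\g$. The equality $\g^*([\Bk,\id])=1^*$ then forces itself from $\g(1)=[\Bk,\id]$ and the fact that both pairings are graded: $\lpp \g(x),[\Bk,\id]\rpp$ equals $0$ for $x$ of nonzero weight and equals $1$ for $x=1$. Finally, Lemma 3.10 asserts that $e_j'^{(n)}$ on $K(\SL)$ is adjoint to $f_j^{(n)}$ on $K(\SP)$, while 1.9 defines $e_j'^{(n)}$ on $(\ul\BU_q^-)^*$ as the transpose of $f_j^{(n)}$ on $\ul\BU_q^-$; since $\g$ intertwines the two copies of $f_j^{(n)}$, its transpose $\g^*$ automatically intertwines the two copies of $e_j'^{(n)}$.

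The only subtlety I anticipate is ensuring that $e_j'^{(n)}$ preserves the $\wt\BA$-lattice ${}_{\wt\BA}K(\SL)$ (and symmetrically for ${}_{\wt\BA}(\ul\BU_q^-)^*$), so that the transpose construction indeed lands in the claimed modules. For $f_j^{(n)}$ this is built into the definition of $\wt\BA$ through Theorem 3.12; the corresponding stability under $e_j'^{(n)}$ then follows formally from Lemma 3.10 together with the perfection of the $\wt\BA$-pairing on the dual bases. Once these bookkeeping points are in hand, the proof is essentially formal and runs in complete parallel with that of Proposition 2.9.
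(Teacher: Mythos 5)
Your proposal is correct and follows essentially the same route as the paper: the paper also sets $\g = \g_0\circ *$ with $*$ the anti-involution fixing the $f_j$, deduces (i) from Theorem 3.12 and the definition (3.9.1) of the $f_j^{(n)}$-action, and obtains (ii) by taking the transpose of $\g$ and invoking the adjunction of Lemma 3.10. Your write-up merely spells out the intertwining computation and the lattice/pairing bookkeeping that the paper leaves implicit.
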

\begin{proof}
The proof is similar to that of Proposition 2.9.
We define $\g : {}_{\wt\BA}\ul\BU_q^- \to {}_{\wt\BA}K(\SP)$ by 
$\g = \g_0 \circ *$, where $* : {}_{\wt\BA}\ul\BU_q \to {}_{\wt\BA}\ul\BU_q^-$
is the anti-involution. 
Since the action of $f_j^{(n)}$ on $K(\SP)$ is defined by (3.9.1), 
(i) follows from  Theorem 3.12. If we define $\g^*$ as the transpose of $\g$, 
(ii) follows from (i) by Lemma 3.10.
\end{proof}

\para{3.14.}
$f_j^{(n)} \in {}_{\wt\BA}\ul\BU_q^-$ satisfies the relation 
$[n]^!_jf_j^{(n)} = f_j^n$.   Hence by (3.13.1), the operator 
$f_j^{(n)}$ on ${}_{\wt\BA}K(\SP)$ satisfies a similar relation. 
Then by using Lemma 3.10, we obtain the corresponding relation on the operator ${e'}_j^{(n)}$
on ${}_{\wt\BA}K(\SL)$. Thus we have
\begin{equation*}
\tag{3.14.1}
[n]^!_jf_j^{(n)} = f_j^n, \qquad [n]^!_j{e'}_j^{(n)} = {e'}_j^n
\end{equation*} 
as operators on ${}_{\wt\BA}K(\SP)$ and on ${}_{\wt\BA}K(\SL)$. 

\para{3.15.}
In [M, Lemma 11.3], McNamara showed that 
$\wt\BA \subset \BZ[\z_{\Bn} + \z_{\Bn}\iv, q, q\iv]$, and paused a
question whether $\wt \BA$ coincides with $\BA$ or not ([M, Question 11.2]).
In this paper, we show that this certainly holds, namely, 

\begin{thm} 
The structure constants in $K(\SP)$ with respect to the basis 
$\ul\bB$ are all contained in $\BA$.  Thus the isomorphisms 
(3.12.1), (3.13.1) and (3.13.2) can be replaced by the isomorphisms
\begin{equation*}
\g_0, \g  : {}_{\BA}\ul\BU_q^- \isom {}_{\BA}K(\SP) 
\quad \text{ and} \quad \g^* : {}_{\BA}K(\SL) \isom {}_{\BA}(\ul\BU_q^-)^*.
\end{equation*}
\end{thm}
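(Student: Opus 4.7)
The strategy is to compare McNamara's isomorphism $\g_0:{}_{\wt\BA}\ul\BU_q^-\isom{}_{\wt\BA}K(\SP)$ from Theorem~3.12 with the $\z_\Bn$-free categorification ${}_{\BA'}\ul\BU_q^-\isom\CV_q$ provided by composing Theorem~1.7 and Proposition~2.12, in order to identify the basis $\ul\bB$, up to sign, with the image under $\g_0$ of the canonical basis of~${}_\BA\ul\BU_q^-$. Since the canonical basis has structure constants in~$\BA$, this will immediately give $\wt\BA=\BA$.

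I first reduce to $\Bn=\ell^k$ a prime power; orders with $\z_\Bn+\z_\Bn\iv\in\BZ$ (such as $\Bn\in\{1,2,3,4,6\}$) give the claim immediately from the inclusion $\wt\BA\subset\BZ[\z_\Bn+\z_\Bn\iv,q,q\iv]$, and general $\Bn$ can then be handled prime-by-prime. Fix $\ell\mid\Bn$ and set $\BA'=\BA/\ell\BA$. By Theorem~1.7 and Proposition~2.12, $\vT\circ\Phi:{}_{\BA'}\ul\BU_q^-\isom\CV_q$ is an $\BA'$-algebra isomorphism, and by~[MSZ,~Thm.~4.27] combined with the construction of $\ul\bB_\bullet$ in~2.11, it sends the canonical basis of~${}_{\BA'}\ul\BU_q^-$ to $\ul\bB_\bullet$, whose structure constants in $\CV_q$ therefore lie in~$\BA'$. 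On the KLR side, since $X^{\ell^k}-1\equiv(X-1)^{\ell^k}\pmod\ell$, the element $\z_\Bn+\z_\Bn\iv-2$ is nilpotent in $\BZ[\z_\Bn+\z_\Bn\iv]/\ell$; quotienting out yields a surjection $\rho:\BZ[\z_\Bn+\z_\Bn\iv,q,q\iv]\twoheadrightarrow\BA'$ and, by restriction, $\bar\rho:\wt\BA\to\BA'$. Base-changing $\g_0$ along $\bar\rho$ and matching generators $[P(j)^{(n)}]\leftrightarrow g_j^{(n)}$, bar-involutions, and convolution products on both sides, one identifies $\BA'\otimes_{\bar\rho}{}_{\wt\BA}K(\SP)$ with $\CV_q$ as $\BA'$-algebras, so that $\ul\bB$ reduces precisely to $\ul\bB_\bullet$.

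For each $\ul b\in\ul\bB$ the element $x_{\ul b}:=\g_0^{-1}(\ul b)\in{}_{\wt\BA}\ul\BU_q^-$ is then bar-invariant, almost-orthonormal (by the isometry of $\g_0$ and the self-duality of $\ul b$), and satisfies $\bar\rho(x_{\ul b})=\pm\bar b$ for some canonical basis element $b\in\CB$. The main obstacle is to upgrade this congruence to an exact equality $x_{\ul b}=\pm b$ in~${}_\BA\ul\BU_q^-$. Writing $x_{\ul b}=\sum_{b'\in\CB}c_{b'}\,b'$ with $c_{b'}\in\wt\BA\subset\BZ[\z_\Bn+\z_\Bn\iv,q,q\iv]$, bar-invariance forces $\overline{c_{b'}}=c_{b'}$, the almost-orthonormality yields $(x_{\ul b},x_{\ul b})\in 1+q\wt\BA[[q]]$, and the mod-$\ell$ identification fixes $c_{b'}\equiv\pm\d_{b',b}\pmod{\ker\bar\rho}$. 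I expect these three constraints, combined with Lusztig's uniqueness characterization~(1.4.3) of the canonical signed basis of~${}_\BA\ul\BU_q^-$, to force $x_{\ul b}=\pm b$ outright; once this is established, the structure constants of $\ul\bB$ coincide with those of the canonical basis and hence lie in~$\BA$, giving $\wt\BA=\BA$.
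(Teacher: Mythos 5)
Your overall strategy --- compare McNamara's $\wt\BA$-isomorphism with the mod-$\ell$, $\z_{\Bn}$-free categorification coming from Theorem 1.7 and Proposition 2.12 --- is the right starting point, and it is indeed how the paper begins. But there are two genuine gaps.

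First, the reduction to prime-power order is not a triviality that can be dispatched ``prime-by-prime.'' Theorem 1.7 and the whole construction of $\BV_q$ and $\CV_q$ in 1.6 and 2.11 are only available when the order of $\s$ is a power of a single prime $\ell$; for general $\Bn$ the quotient of the $\s$-fixed subalgebra by orbit sums is not isomorphic to ${}_{\BA'}\ul\BU_q^-$. The paper's Section 4 resolves this by writing $\s=\s_1\s_2$ with $\Bn_2$ a prime power and $\Bn_1$ coprime to it, building the iterated category $\SC''_{\b}$ (folding first by $\s_1$, then by $\s_2$), proving $\SC''_{\b}\simeq\SC_{\b}$, and running an induction on $\Bn$: the $\s_1$-folding is controlled by the induction hypothesis ($\wt\BA_1=\BA$), which pins the structure constants inside $\BZ[\z_{\Bn_2},q,q\iv]$ (Lemma 4.8) before the mod-$\ell$ argument is applied to the $\s_2$-layer. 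None of this machinery appears in your plan, and without it your map $\bar\rho$ is only defined on $\BZ[\z_{\Bn}+\z_{\Bn}\iv,q,q\iv]$ for a non-prime-power $\Bn$, where $\ell$ is not totally ramified and the kernel of reduction is too large for any Nakayama-type conclusion.

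Second, and more seriously, your endgame --- forcing $x_{\ul b}=\pm b$ for a canonical basis element $b$ via bar-invariance, almost-orthonormality, and the characterization (1.4.3) --- cannot work in the generality of Theorem 3.16. Almost-orthonormality of $\ul\bB$ is only established in Proposition 5.4, and it depends on Theorem 5.2 (Varagnolo--Vasserot, Rouquier), which requires $R$ to be of symmetric type over a field of characteristic zero; Theorem 3.16 makes no such assumptions, and in general $\g\iv(\ul\bB)$ is \emph{not} the canonical signed basis (that identification is precisely the extra content of Section 5). There is also a circularity: applying the lattice characterization of the signed basis requires knowing $x_{\ul b}\in{}_{\BA}\ul\BU_q^-$, which is essentially what you are trying to prove. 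The paper sidesteps all of this: it never identifies $\g\iv(\ul\bB)$ with the canonical basis, but instead shows that the $\wt\BA$-span and the $\BA$-span of $\g\iv(\ul\bB_{\b})$ coincide after base change to $\BZ_{\ell}[q,q\iv]$ (Nakayama's lemma applied to the finitely generated $\BZ$-module $Y/Y'$ with $\ell(Y/Y')=0$), and then concludes via the ring intersection $\BZ[\z_{\Bn_2},q,q\iv]\cap\BZ_{\ell}[q,q\iv]=\BZ[q,q\iv]$. You should replace your final paragraph with an argument of this kind, and supply the two-step folding needed to justify the prime-power reduction.
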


The theorem will be proved in the next section.

\par\bigskip
\section {The proof of Theorem 3.16 }

\para{4.1.}
We keep the setup in Section 3.  In particular, 
$\Bn$ is the order of $\s : I \to I$, and $\z_{\Bn}$ is a primitive 
$\Bn$-th root of unity in $\BC$. 
If $\Bn = 1$, Theorem 3.16 certainly holds by Theorem 2.7.  
We prove the theorem by induction on $\Bn$.  
Hence we assume that $\Bn > 1$, and
that the theorem holds for $\Bn' < \Bn$. 
We write $\Bn = \Bn_1\Bn_2$, where $\Bn_2 > 1$ is a power of a prime number 
$\ell$, and $\Bn_1$ is prime to $\Bn_2$. Choose $a, b \in \BZ$ such that 
$1 = \Bn_1a + \Bn_2b$, and set $\s_1 = \s^{\Bn_2b}, \s_2 = \s^{\Bn_1a}$. 
Thus $\s = \s_1\s_2$, and the order of $\s_1$ (resp. $\s_2$) is $\Bn_1$ (resp. $\Bn_2$). 
We define $\z_{\Bn_1} = (\z_{\Bn})^{\Bn_2b}$ and $\z_{\Bn_2} = (\z_{\Bn})^{\Bn_1a}$. 
Then $\z_{\Bn_1}$ (resp. $\z_{\Bn_2}$) is a primitive $\Bn_1$-th root 
(resp. $\Bn_2$-th root) of unity in $\BC$.  The ring homomorphism 
$\BZ[\z_{\Bn}] \to \Bk$ restricts to the ring homomorphism $\BZ[\z_{\Bn_1}] \to \Bk$, 
or to $\BZ[\z_{\Bn_2}] \to \Bk$.

\para{4.2.}
Assume that $\b \in Q_+^{\s}$. Let $\SC_{\b}$ be the abelian category 
associated to $\s^* : R(\b) \isom R(\b)$ as  
in 3.3, namely, the objects in $\SC_{\b}$ are $(M, \f)$, where $M$ is a graded 
$R(\b)$-module, and $\f : \s^*M \isom M$ satisfies the conditions in 3.1.   
Since $\s_1$ is an admissible diagram automorphism on $X$, and $\b$ is $\s_1$-invariant, 
one can define the abelian category $\SC'_{\b}$ by replacing $\s$ by $\s_1$. 
\par
For each $(M,\f) \in \SC'_{\b}$, we define an object 
$\s_2^*(M,\f) = (M', \f') \in \SC'_{\b}$ as follows; 
set $M' = \s_2^*M$, then $\s_2^*(\f) : \s_2^*\s_1^*M \isom \s_2^*M$ 
induces an isomorphism 
$\s_1^*M ' = \s_1^*\s_2^*M = \s_2^*\s_1^*M \isom \s_2^*M = M'$, 
which we denote by $\f': \s_1^*M ' \isom M '$.  
Clearly $\f' = \s_2^*(\f)$ satisfies the relation (3.1.1), and 
we obtain $(M',\f') \in \SC'_{\b}$. 
The functor $\s_2^* : \SC'_{\b} \to \SC'_{\b}$ is a periodic functor 
on the linear category $\SC'_{\b}$. 
Thus we define $\SC''_{\b}$ as the category associated to
the periodic functor $\s_2^*$ (see 3.1). 
\par
The category $\SC_{\b}''$ is described as follows;
objects are triples $(M, \f_1,\f_2)$ where $(M, \f_1) \in \SC'_{\b}$, and 
$\f_2 : \s_2^*(M,\f_1) \isom (M, \f_1)$ in $\SC'_{\b}$ satisfies the condition 
that $\f_2\circ \s_2^*\f_2\circ\cdots\circ (\s_2^*)^{\Bn_2-1}\f_2 = \id_{\SC'_{\b}}$.
A morphism from $(M,\f_1, \f_2)$ to $(M',\f_1', \f'_2)$ is defined by using 
a similar diagram as in (3.1.2) by replacing $M$ by $(M,\f_1)$ and $\s$ by 
$\s_2$. 
\par
By 3.3,
$\SC_{\b}$ is equivalent to the category of graded 
representations of the algebra 
$R(\b) \sharp (\BZ/\Bn\BZ)$.
Similarly, $\SC'_{\b}$ is equivalent to the category of graded representations 
of the algebra $R(\b)\sharp(\BZ/\Bn_1\BZ)$. 
By a similar argument, $\SC''_{\b}$ is equivalent to the category of graded 
representations of the algebra $\bigl(R(\b)\sharp(\BZ/\Bn_1\BZ)\bigr)\sharp(\BZ/\Bn_2\BZ)$.
Since $\Bk[\BZ/\Bn\BZ] \simeq \Bk[\BZ/\Bn_1\BZ] \otimes_{\Bk}\Bk[\BZ/\Bn_2\BZ]$, we see that
\begin{equation*}
\tag{4.2.1}
\bigl(R(\b)\sharp (\BZ/\Bn_1\BZ)\bigr)\sharp(\BZ/\Bn_2\BZ)
        \simeq R(\b)\sharp (\BZ/\Bn\BZ).
\end{equation*}
\par   
Assume given $(M, \f_1, \f_2) \in \SC''_{\b}$. 
$\f_2 : \s_2^*(M, \f_1) \isom (M,\f_1)$ induces an isomorphism $\s_2^*M \isom M$,   
which we also denote by $\f_2$. 
Then $\f_2\circ \s_2^*(\f_1): \s_2^*\s_1^* M \isom \s_2^*M  \isom M$ gives 
an isomorphism $\s^*M \isom M$, and  
the correspondence $(M, \f_1, \f_2) \mapsto (M, \f_2 \circ \s_2^*(\f_1))$ gives a 
functor $\SC_{\b}'' \to \SC_{\b}$.  By (4.2.1), this gives an equivalence 
of the categories $\SC_{\b}'' \isom \SC_{\b}$. 
\par
Recall that $\SP_{\b}$ is the full subcategory of finitely generated graded projective 
objects in $\SC_{\b}$, which is equivalent to the subcategory of 
finitely generated projective $R(\b)\sharp(\BZ/\Bn\BZ)$-modules.  Similarly, 
the full subcategory $\SP'_{\b}$ of $\SC'_{\b}$, and the full subcategory 
$\SP_{\b}''$ of $\SC''_{\b}$ are defined.   
Let $\SL_{\b}$ be the full subcategory of finite dimensional graded 
objects in $\SC_{\b}$, namely, the full subcategory of finite dimensional 
graded $R(\b)\sharp(\BZ/\Bn\BZ)$-modules.  The full subcategory $\SL'_{\b}$
of $\SC'_{\b}$, and the full subcategory $\SL_{\b}''$ of $\SC''_{\b}$
are defined similarly. 
\par
Summing up the above discussion, we obtain the following lemma. 
\begin{lem}  
The category $\SC''_{\b}$ (resp. $\SP''_{\b}$, $\SL''_{\b}$) 
is equivalent to the category $\SC_{\b}$ (resp. $\SP_{\b}$, $\SL_{\b}$). 
\end{lem}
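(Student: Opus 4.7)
The plan is to establish the equivalence $\SC''_\b \isom \SC_\b$ first, then deduce the equivalences for the subcategories $\SP$ and $\SL$. The central ingredient is the algebra isomorphism (4.2.1), which comes from the Chinese Remainder decomposition $\Bk[\ZZ/\Bn\ZZ] \simeq \Bk[\ZZ/\Bn_1\ZZ]\otimes_\Bk \Bk[\ZZ/\Bn_2\ZZ]$ (valid since $\gcd(\Bn_1,\Bn_2)=1$), matched with the fact that $\s = \s_1\s_2$ acts as the corresponding tensor product of automorphisms.

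First I would apply 3.3 to the admissible diagram automorphism $\s_1$ of order $\Bn_1$, obtaining $\SC'_\b \simeq R(\b)\sharp(\ZZ/\Bn_1\ZZ)\gmod$. Then, viewing $\s_2^*$ as a periodic functor of period $\Bn_2$ on $\SC'_\b$, a second application of the same construction (together with the commutation $\s_1^*\s_2^* = \s_2^*\s_1^*$) yields $\SC''_\b \simeq \bigl(R(\b)\sharp(\ZZ/\Bn_1\ZZ)\bigr)\sharp(\ZZ/\Bn_2\ZZ)\gmod$. Invoking (4.2.1) identifies the right-hand side with $R(\b)\sharp(\ZZ/\Bn\ZZ)\gmod \simeq \SC_\b$. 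To realise this abstract equivalence concretely, I would check that the explicit functor $\Psi(M,\f_1,\f_2) = (M, \f_2 \circ \s_2^*(\f_1))$ described just before the lemma implements it. The main computation is verifying that $\f := \f_2 \circ \s_2^*(\f_1)$ satisfies the $\Bn$-fold cocycle condition (3.1.1): expanding the iterated composition $\f \circ \s^*\f \circ \cdots \circ (\s^*)^{\Bn-1}\f$ and using $\s^* = \s_1^*\s_2^* = \s_2^*\s_1^*$, one regroups the product into $\Bn_2$ copies of the $\Bn_1$-fold cocycle identity for $\f_1$ and $\Bn_1$ copies of the $\Bn_2$-fold cocycle identity for $\f_2$, all of which collapse to the identity. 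An inverse functor on objects sends $(M,\f)$ to $(M, \f^{[\Bn_2 b]}, \f^{[\Bn_1 a]})$, where $\f^{[k]}$ denotes the natural $k$-fold cocycle composition into a map $(\s^*)^k M \isom M$; the relation $\Bn_1 a + \Bn_2 b = 1$ ensures that the recomposition recovers $\f$, and full faithfulness follows because morphisms in $\SC''_\b$ are precisely $R(\b)$-module maps intertwining both $\f_1$ and $\f_2$, which under the identification coincide with those intertwining $\f$.

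For the subcategory assertions, observe that $\Psi$ acts as the identity on the underlying graded $R(\b)$-module, only repackaging the equivariant structure. Hence finite-dimensional objects correspond to finite-dimensional objects, giving $\SL''_\b \simeq \SL_\b$. For $\SP''_\b \simeq \SP_\b$, projectivity is intrinsic to the corresponding module over the relevant smash-product algebra, and since (4.2.1) is an isomorphism of $\Bk$-algebras, finitely generated projective modules correspond on both sides. The main technical obstacle is the cocycle verification for $\Psi$ in the middle step; once the commutation of $\s_1^*$ with $\s_2^*$ and their interaction with $\f_1, \f_2$ are set up carefully, it reduces to a combinatorial bookkeeping argument using only the individual cocycle relations.
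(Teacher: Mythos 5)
Your proposal follows essentially the same route as the paper: the lemma there is just the summary of the discussion in 4.2, which identifies $\SC''_{\b}$ with graded modules over $\bigl(R(\b)\sharp(\BZ/\Bn_1\BZ)\bigr)\sharp(\BZ/\Bn_2\BZ)$ and invokes the isomorphism (4.2.1) coming from $\Bk[\BZ/\Bn\BZ]\simeq\Bk[\BZ/\Bn_1\BZ]\otimes_{\Bk}\Bk[\BZ/\Bn_2\BZ]$, realized concretely by the functor $(M,\f_1,\f_2)\mapsto(M,\f_2\circ\s_2^*(\f_1))$. Your added verifications (the cocycle check via regrouping, the explicit inverse using $\Bn_1a+\Bn_2b=1$, and the preservation of projectivity and finite-dimensionality) are correct and merely make explicit what the paper leaves implicit.
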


\para{4.4.}
By the discussion in 3.2, 
if $(M, \f) \in \SC_{\b}$, then $(M, \z_{\Bn}\f) \in \SC_{\b}$, which is denoted by
$\z_{\Bn}(M, \f)$. Similarly, $\z_{\Bn_1}(M, \f_1)$ is defined for 
$(M, \f_1) \in \SC'_{\b}$. 
For a given $(M, \f_1, \f_2) \in \SC_{\b}''$, we have 
$(M, \z_{\Bn_1}\f_1, \z_{\Bn_2}\f_2) \in \SC_{\b}''$, which we denote by 
$\z_{\Bn}(M, \f_1, \f_2)$. 
Thus under the category equivalence $\SC_{\b}'' \isom \SC_{\b}$, 
the action of $\z_{\Bn}$ on $(M, \f_1,\f_2)$ corresponds 
to the action of that on $(M, \f_2\circ \s_2^*(\f_1))$. 
\par
The notion of traceless elements in $\SC_{\b}$, and 
the Grothendieck groups $K(\SP_{\b})$ and $K(\SL_{\b})$ were 
defined in 3.2 and 3.3.   This definition works also for 
$\SC'_{\b}$, and the Grothendieck group $K(\SP'_{\b}), K(\SL'_{\b})$
are defined. 
\par
The notion of traceless elements for $\SC''_{\b}$ 
is translated to our situation as follows. 
An object $(A,\f_1, \f_2) \in \SC''_{\b}$ is said to be traceless 
if there exists an object $(M, \f) \in \SC'_{\b}$, an integer $t_2 \ge 1$
which is a divisor of $\Bn_2$ such that $(\s_2^*)^{t_2}(M,\f) \isom (M,\f)$, 
and an isomorphism 
\begin{equation*}
\tag{4.4.1}
(A, \f_1) \simeq (M, \f) \oplus \s_2^*(M,\f) \oplus \cdots 
              \oplus (\s_2^*)^{t_2-1}(M, \f),
\end{equation*}  
where $\f_2 : \s_2^*(A, \f_1) \isom (A, \f_1)$ 
is defined as in 3.2. Furthermore, we assume that 
there exists an $R(\b)$-module $M_1$, an integer $t_1 \ge 1$ which is 
a divisor of $\Bn_1$ such that $(\s_1^*)^{t_1}M_1 \isom M_1$, 
and an isomorphism 
\begin{equation*}
\tag{4.4.2}
M \simeq M_1 \oplus (\s_1^*)M_1 \oplus \cdots\oplus (\s_1^*)^{t_1-1}M_1,
\end{equation*}   
where $\f : \s_1^*M \isom M$ is defined as in 3.2.  
Finally, we assume that $t_1t_2 \ge 2$. 
\par
Under the equivalence $\SC''_{\b} \simeq \SC_{\b}$, the traceless elements 
in $\SC''_{\b}$ correspond to the traceless elements in $\SC_{\b}$. 
We define the Grothendieck group $K(\SC''_{\b})$ in a similar way as in 3.2, 
by using th action of $\z_{\Bn}$, and the traceless elements as above. 
Then $K(\SC''_{\b}) \simeq K(\SC_{\b})$.  The Grothendieck groups 
$K(\SP''_{\b}), K(\SL''_{\b})$ are defined similarly, and we have
\begin{equation*}
\tag{4.4.3}
K(\SP''_{\b}) \simeq K(\SP_{\b}),  \qquad K(\SL''_{\b}) \simeq K(\SL_{\b}).
\end{equation*}

\para{4.5.}
Recall that the functor $\BD : \SC_{\b} \to \SC_{\b}$ 
is given by $\BD (M,\f) = (\BD M, \BD (\f)\iv)$ with respect to 
the contravariant functor $\BD : R(\b)\Mod \to R(\b)\Mod$. 
Under the category equivalence $\SC''_{\b} \simeq \SC_{\b}$, this is given by 
\begin{equation*}
\tag{4.5.1}
\BD : (M, \f_1, \f_2) \mapsto (\BD M, \BD (\f_1)\iv, \BD(\f_2)\iv),
\end{equation*}
where $\BD(\f_2)$ is the dual of $\f_2 : \s_2^*(M,\f_1) \to (M, \f_1)$ in $\SC'_{\b}$, 
and it is also the dual of $\f_2 : \s_2^*M \to M$ in $R(\b)\Mod$. 
Similar results hold also for $\SP''_{\b} \simeq \SP_{\b}$, and $\SL''_{\b} \simeq \SL_{\b}$.
\par
Here we note a lemma.
\begin{lem}  
Assume that $(P,\f) \in \SP_{\b}$ is a projective 
indecomposable object, which is not traceless.  Then $P$ is a 
projective indecomposable $R(\b)$-module. 
\end{lem}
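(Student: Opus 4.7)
The strategy is to pass from $(P,\f)$ to its simple top $(L,\p)\in\SL_{\b}$, use the non-traceless hypothesis together with Clifford theory for the smash-product algebra $R(\b)\sharp(\BZ/\Bn\BZ)$ of (3.3.1) to identify $L$ with a simple $R(\b)$-module, and then lift the whole picture back to projectives. Since $(P,\f)$ is projective indecomposable in $\SP_{\b}$, it is the projective cover of a unique simple $(L,\p)\in\SL_{\b}$; I would first check that $(L,\p)$ is itself non-traceless, for if $L\cong L_0\oplus\s^*L_0\oplus\cdots\oplus(\s^*)^{t-1}L_0$ with $t\ge 2$ and $\p$ the orbit shift, then the $R(\b)$-projective covers of the summands would assemble into an orbit-shift decomposition of $(P,\f)$ of the same form, making it traceless too.

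Next I would apply Clifford theory to the simple $R(\b)\sharp(\BZ/\Bn\BZ)$-module $L$. Because $\ch\Bk\nmid\Bn$ and $\Bk$ is algebraically closed, $L|_{R(\b)}$ is semisimple, and its constituents form a single $\s^*$-orbit of some simple $R(\b)$-module $L_0$; the stabilizer being cyclic with $\Bk$ containing the relevant roots of unity, the Schur multiplier is trivial and each constituent occurs with multiplicity $1$. Thus $L|_{R(\b)}\simeq L_0\oplus\s^*L_0\oplus\cdots\oplus(\s^*)^{t-1}L_0$ with $t=[\BZ/\Bn\BZ:\operatorname{Stab}[L_0]]$, and $\p$ is the induced orbit shift. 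If $t\ge 2$, this exhibits $(L,\p)$ as traceless in the sense of 3.2 (take $B=L_0$), contradicting the previous paragraph. Hence $t=1$, so $\s^*L_0\cong L_0$ and $L=L_0$ is simple as an $R(\b)$-module.

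Finally I would transfer back to projectives. Let $P_0\in R(\b)\gpp$ be the projective cover of $L_0$. Uniqueness of projective covers and $\s^*L_0\cong L_0$ give $\s^*P_0\cong P_0$; I lift $\p:\s^*L_0\isom L_0$ through $P_0\to L_0$ to an isomorphism $\tilde\f:\s^*P_0\isom P_0$. The composition $\tilde\f\circ\s^*\tilde\f\circ\cdots\circ(\s^*)^{\Bn-1}\tilde\f$ is a degree-$0$ endomorphism of $P_0$ lifting $\id_{L_0}$, hence lies in $1+\rad\bigl(\End_{R(\b)}(P_0)_0\bigr)$; since $\Bn$ is invertible in $\Bk$, I can extract an $\Bn$-th root there and rescale $\tilde\f$ so that (3.1.1) holds. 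The resulting $(P_0,\tilde\f)\in\SP_{\b}$ is projective (as $R(\b)\sharp(\BZ/\Bn\BZ)$ is free over $R(\b)$ and $\ch\Bk\nmid\Bn$), surjects onto $(L,\p)$, and has local endomorphism ring (the $\langle\s\rangle$-fixed subring of the local ring $\End_{R(\b)}(P_0)_0$), so it is the projective cover of $(L,\p)$ in $\SP_{\b}$. Uniqueness of projective covers then gives $(P,\f)\cong(P_0,\tilde\f)$, and consequently $P\cong P_0$ is indecomposable in $R(\b)\gpp$.

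The main obstacle is the Clifford-theoretic step: one must verify that the multiplicity in $L|_{R(\b)}$ really is $1$ and that the induced $\s^*$-action is exactly the orbit shift featuring in the definition of tracelessness. Once this is settled, the remaining constructions---extending the $\s^*$-structure to a projective cover, enforcing (3.1.1) by rescaling, and identifying the projective cover in $\SP_{\b}$---are routine given $\ch\Bk\nmid\Bn$.
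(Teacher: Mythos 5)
Your argument is correct, but it runs along a genuinely different track from the paper's. The paper works entirely at the level of projectives: it picks an indecomposable $R(\b)$-direct summand $P_0$ of $P$, lets $t$ be minimal with $(\s^*)^tP_0 \simeq P_0$, observes that the orbit sum $M = P_0 \oplus \s^*P_0 \oplus \cdots \oplus (\s^*)^{t-1}P_0$ together with the restriction of $\f$ is a direct summand of $(P,\f)$ in $\SP_{\b}$, and concludes from indecomposability that $(P,\f) = (M,\f_0)$; tracelessness then forces $t=1$. That is a three-line Krull--Schmidt argument needing nothing beyond the definition in 3.2. You instead descend to the simple top $(L,\p)$, invoke Clifford theory for $R(\b)\sharp(\BZ/\Bn\BZ)$ (semisimplicity of $L|_{R(\b)}$, single $\s^*$-orbit, multiplicity one via triviality of the Schur multiplier of a cyclic stabilizer over an algebraically closed field with $\ch\Bk\nmid\Bn$), rule out orbit length $t\ge 2$ by matching tracelessness of $(L,\p)$ against that of its projective cover, and then climb back up to projectives with a root-extraction argument to enforce (3.1.1). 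All of these steps check out: the multiplicity-one claim you flag as the main obstacle is indeed standard under the stated hypotheses, and the rescaling works because the composite $c=\tilde\f\circ\s^*\tilde\f\circ\cdots\circ(\s^*)^{\Bn-1}\tilde\f$ lies in $1+\rad\End_{R(\b)}(P_0)_0$ and satisfies $\tilde\f\circ\s^*(c)=c\circ\tilde\f$, so an $\Bn$-th root taken inside $\Bk[c]$ does the job. What your route buys is extra information (the precise Clifford decomposition of $L|_{R(\b)}$ and an explicit model of the projective cover in $\SP_{\b}$); what it costs is length and machinery. Note also that your last paragraph can be shortened: since $\Bn$ is invertible, $\rad\bigl(R(\b)\sharp(\BZ/\Bn\BZ)\bigr)=\rad(R(\b))\cdot\bigl(R(\b)\sharp(\BZ/\Bn\BZ)\bigr)$, so $P/\rad_{R(\b)}P = L = L_0$ is already simple over $R(\b)$ and $P$ is indecomposable over $R(\b)$ with no need to construct $\tilde\f$ at all.
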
 

\begin{proof}
Let $(P, \f) \in \SP_{\b}$ be a projective indecomposable object, 
where $\f : \s^*P\isom P$. We assume that $(P,\f)$ is not traceless.  
Then $P$ is a projective $R(\b)$-module. 
Let $P_0$ be an indecomposable direct summand of $P$.  
Let $t$ be the smallest integer such that $(\s^*)^tP_0 \isom P_0$. 
Then $f\circ \s^*(\f)\circ \cdots\circ (\s^*)^{t-1}\f : (\s^*)^tP \isom P$ 
induces an isomorphism $(\s^*)^tP_0 \isom P_0$, which we denote by $\f'_0$.  
Set $M = P_0 \oplus \s^*P_0 \oplus\cdots \oplus (\s^*)^{t-1}P_0$.  
Then $\s^*M \isom M$, and the restriction of $\f : \s^*P \isom P$ 
on $\s^*M$ gives an isomorphism $\f_0 : \s^*M \isom M$, where 
$\f_0$ is defined by $\id : (\s^*)^kP_0 \isom (\s^*)^kP_0$ 
for $k = 1, \dots, t-1$, and $\f_0': (\s^*)^tP_0 \isom P_0$. 
Hence $(M, \f_0) \in \SP_{\b}$ is a direct summand of $(P, \f)$.  Since 
$(P,\f)$ is indecomposable, we have $(P,\f) = (M, \f_0)$. 
If $t \ge 2$, then $(M,\f_0)$ is traceless. Hence by assumption, 
$t = 1$, i.e., $P = P_0$ is indecomposable. The lemma holds. 
\end{proof}

\para{4.7.}
For a given $\b \in Q_+^{\s}$, 
let $\ul\bB_{\b}$ (resp. $\ul\bB_{\b}^*$) be the $\BZ[\z_{\Bn}, q,q\iv]$-basis of 
$K(\SP_{\b})$ (resp. $K(\SL_{\b})$) given in 3.4.  
Similarly, for a given $\b \in Q_+^{\s_1}$, 
$\BZ[\z_{\Bn_1}, q,q\iv]$-basis 
of $K(\SP'_{\b})$ (resp. $K(\SL'_{\b})$) 
is defined, which we denote by $\bB'_{\b}$ (resp. $\bB'^*_{\b}$).   
\par
Let $(P, \f, \f') \in \SP''_{\b}$ be a self-dual projective indecomposable object 
corresponding to an element in $\ul\bB_{\b}$ under the equivalence 
$\SP''_{\b} \simeq \SP_{\b}$.  Then $P \in R(\b)\gpp$ is projective indecomposable
by Lemma 4.6, and  $(P, \f) \in \SP'_{\b}$ is self-dual projective 
indecomposable by (4.5.1).  Thus we have  
\par\medskip\noindent
(4.7.1) \ The projection $(P,\f_1, \f_2) \mapsto (P,\f_1)$ 
induces a map $\ul\bB_{\b} \to \pm  \bB'_{\b}$. 
\par\medskip
Let $\b, \g \in Q_+^{\s}$. For
$(M, \f) \in \SC_{\b}, (N, \f') \in \SC_{\g}$, the convolution 
product is defined as 
$(M,\f) \circ (N,\f') = (M \circ N, \f \circ \f')$ as in (3.6.1), 
by using the convolution product $M \circ N \in R(\b+\g)\Mod$.
Under the equivalence $\SC''_{\b} \simeq \SC_{\b}$, this convolution 
product is expressed as
\begin{equation*}
\tag{4.7.2}
(M, \f_1, \f_2) \circ (N, \f_1', \f_2') 
     = (M\circ N, \f_1\circ \f_1',  \f_2\circ \f_2'),  
\end{equation*}
where $(M\circ N, \f_1\circ \f_1') = (M, \f_1) \circ (N, \f_1')$ applied 
for the category $\SC'$, and $\f_2\circ \f_2': \s_2^*(M\circ N) \isom M\circ N$
is defined similarly to $\f_1\circ \f_1': \s_1^*(M \circ N) \isom (M\circ N)$.  
\par
Similarly to $\wt\BA$ defined in 3.11, 
we define $\wt\BA_1$ as the smallest subring 
of $\BZ[\z_{\Bn_1}, q,q\iv]$ containing $\BA$ such that the structure constants with respect to 
$\bB'$ are contained in $\wt\BA_1$. 
Since $\Bn_1 < \Bn$, by induction hypothesis, we have $\wt \BA_1 = \BA$. 
\par
By (4.7.1) and (4.7.2), we have the following. 
\begin{lem}  
The structure constants 
of $\ul\bB$ in ${}_{\wt\BA}K(\SP)$ lie  in 
$\BZ[\z_{\Bn_2}, q, q\iv]$. 
\end{lem}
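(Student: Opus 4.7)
The plan is to compute the structure constants in $K(\SP)$ by reducing through $K(\SP')$ via the equivalence $\SP \simeq \SP''$ of Lemma 4.3 together with the forgetful assignment $(M, \f_1, \f_2) \mapsto (M, \f_1)$. Fix $b_1 \in \ul\bB_\b$ and $b_2 \in \ul\bB_\g$, with representatives $(P_i, \f_1^{(i)}, \f_2^{(i)}) \in \SP''$. By (4.7.2) their convolution product is
\[
(P_1 \circ P_2,\ \f_1^{(1)} \circ \f_1^{(2)},\ \f_2^{(1)} \circ \f_2^{(2)}) \in \SP''_{\b+\g},
\]
whose image in $\SP'_{\b+\g}$ is the $\SP'$-convolution of $(P_i, \f_1^{(i)})$. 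By (4.7.1) each $(P_i, \f_1^{(i)}) = \ve_i b_i'$ with $b_i' \in \bB'$ and $\ve_i \in \{\pm 1\}$, and since $\Bn_1 < \Bn$ the inductive hypothesis yields $\wt\BA_1 = \BA$. Hence in $K(\SP'_{\b+\g})$ we obtain a decomposition
\[
[P_1 \circ P_2,\ \f_1^{(1)} \circ \f_1^{(2)}] \ =\ \sum_{b' \in \bB'} d^{b'}\, b', \qquad d^{b'} \in \BA.
\]

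Next I would lift this decomposition back to $\SP''$. Write $\f_1 = \f_1^{(1)} \circ \f_1^{(2)}$ and $\f_2 = \f_2^{(1)} \circ \f_2^{(2)}$. For each $b' \in \bB'$ with representative $(B, \f_{1,B}) \in \SP'$, let $I_{b'}$ be the $(B, \f_{1,B})$-isotypic part of $(P_1 \circ P_2, \f_1)$ inside $\SP'_{\b+\g}$. Then $\f_2$ restricts to an automorphism of $I_{b'}$ satisfying $\f_2^{\Bn_2} = \id$, and since $\ch(\Bk) \nmid \Bn_2$ this automorphism diagonalises with eigenvalues that are $\Bn_2$-th roots of unity in $\Bk$. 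Each eigenspace, equipped with its induced $\f_1$ and the corresponding $\f_2$-scalar, is an object of $\SP''$ that (up to sign) represents a basis element of $\ul\bB$ projecting to $\pm b'$. Any $\s_2^*$-orbit of length $t_2 \ge 2$ among non-fixed summands of the $\SP'$-decomposition assembles, together with the induced $\f_2$, into a traceless object of $\SP''$ in the precise sense of (4.4.1)--(4.4.2), hence contributes $0$ in $K(\SP'')$.

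Combining the two steps, the coefficient of any $b \in \ul\bB$ in $b_1 \cdot b_2 \in K(\SP)$ is a $\BZ$-linear combination of products of the $d^{b'} \in \BA$ with $\Bn_2$-th roots of unity in $\Bk$. Such an expression lies in $\BA \cdot \BZ[\z_{\Bn_2}] \subset \BZ[\z_{\Bn_2}, q, q\iv]$, which is precisely the conclusion of Lemma 4.8.

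The main obstacle is making the lifting step rigorous. Concretely, I must verify that the $\f_2$-eigenspace decomposition of each isotypic component $I_{b'}$ genuinely produces classes in $K(\SP'')$ that are exact $\BZ[\z_{\Bn_2}]$-multiples of elements of $\ul\bB$, and that orbits of length $\ge 2$ really give traceless objects with no residual $\z_{\Bn_1}$-dependent terms leaking in through the $\f_1$-structure. This should rest on Lemma 4.6 applied to $\s_1$ (so that each $(B, \f_{1,B})$ has $B$ projective indecomposable as an $R(\b+\g)$-module) combined with the two-layer form of tracelessness built into (4.4.1)--(4.4.2), which was tailored precisely for this kind of bookkeeping.
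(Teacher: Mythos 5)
Your route is the one the paper intends: its entire proof of Lemma 4.8 is the single sentence ``By (4.7.1) and (4.7.2), we have the following,'' resting on the induction hypothesis $\wt\BA_1 = \BA$, and your first step (push the product into $\SP'$ via (4.7.2), identify the two factors with elements of $\pm\bB'$ via (4.7.1), and invoke induction to get a decomposition with coefficients $d^{b'} \in \BA$) is exactly what is meant there.

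The problem is in the lifting step, and it sits precisely where you said the main obstacle lies. You decompose $(P_1\circ P_2,\f_1)$ into isotypic pieces $I_{b'}$ indexed by $b' \in \bB'$ plus $\s_2^*$-orbits of length $\ge 2$, but the indecomposable summands of an object of $\SP'_{\b+\g}$ need not all be of the form $q^a(B,\f_{1,B})$ with $(B,\f_{1,B})$ representing an element of $\pm\bB'$: they can be the twists $\z_{\Bn_1}^j q^a(B,\f_{1,B})$ with $j \not\equiv 0$. Such twisted summands do occur in general --- they are the only possible source of the failure of positivity recorded in Remark 5.13(ii), since a coefficient such as $-1 = \z_3 + \z_3^2$ can only come from a block $\z_3(B,\f_{1,B}) \oplus \z_3^2(B,\f_{1,B})$ --- and the induction hypothesis $d^{b'} \in \BA$ only constrains the \emph{sum} $\sum_j N_j\z_{\Bn_1}^j$ of their multiplicities, not the individual $N_j$. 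An indecomposable $\SP''$-summand living inside the $\z_{\Bn_1}^j(B,\f_{1,B})$-isotypic piece contributes $\z_{\Bn_1}^j\z_{\Bn_2}^m q^a$ to a structure constant, which is not in $\BZ[\z_{\Bn_2},q,q\iv]$ individually; to conclude one must show that the graded $\f_2$-eigenvalue multiplicities on the $j$-th and $j'$-th twisted pieces agree for all $j,j' \not\equiv 0$, i.e.\ that the relation among the $N_j$ forced by $d^{b'}\in\BA$ persists after refining by $\z_{\Bn_2}$-eigenvalues. Your closing sentence (``the coefficient \dots is a $\BZ$-linear combination of products of the $d^{b'}\in\BA$ with $\Bn_2$-th roots of unity'') tacitly assumes every component is untwisted --- in which case, note, the induction hypothesis would not even be needed, since non-negative eigenspace dimensions weighted by $\Bn_2$-th roots of unity land in $\BZ[\z_{\Bn_2},q,q\iv]$ automatically. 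The paper is equally silent on this point, but since your write-up commits to explicit bookkeeping, this is the step that does not yet close.
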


\para{4.9.}
Recall that 
$\ul X = (J, (\ ,\ ))$ is the Cartan datum induced from $(X, \s)$, and  
$\ul\BU_q^-$ is the quantum group associated to $\ul X$. 
Here $\s = \s_1\s_2$, with $\s_2$ : a power of $\ell$. 
Let $J_1$ be the set of $\s_1$-orbits in $I$, and 
$X _1 = (J_1, (\ ,\ ))$  the Cartan datum induced from $(X, \s_1)$.    
$\s_2$ acts on $J_1$, and induces an admissible diagram automorphism on $X_1$.
The set of $\s_2$-orbits on $J_1$ coincides with $J$, and the Cartan datum 
induced from $(X_1, \s_2)$ is isomorphic to $\ul X$.
We denote by $\BU_q'^-$ the quantum group associated to $X_1$.  
$\s_2$ acts on $\BU_q'^-$ as an automorphism. 
\par
Let ${}_{\BA}K(\SP')$ be the $\BA$-submodule of $K(\SP')$ spanned by 
$\bB'$.  Then by the induction hypothesis, ${}_{\BA}K(\SP')$ is 
the $\BA$-subalgebra of $K(\SP')$. 
By applying Theorem 3.16 to $K(\SP')$, we have an isomorphism of 
$\BA$-modules
\begin{equation*}
\tag{4.9.1}
\g_1 : {}_{\BA}\BU_q'^- \isom {}_{\BA}K(\SP'), 
\end{equation*}
which maps $1 \in {}_{\BA}(\BU_q'^-)_0$ to $[\Bk, \id] \in {}_{\BA}K(\SP'_0)$, 
and commutes with the actions of $f_{j_1}^{(n)}$ ($j_1 \in J_1, n \in \BN$).
\par
Let ${}_{\BA}K(\SL')$ be the $\BA$-submodule of $K(\SL')$ spanned by 
$\bB'^*$.  Then ${}_{\BA}K(\SL')$ is $\BA$-dual to ${}_{\BA}K(\SP')$, 
and $\bB'$ and $\bB'^*$ are dual to each other.  
Let $(\BU_q'^-)^*$ be the dual space of $\BU_q'^-$. 
By taking the transpose of $\g_1$, we have an isomorphism of $\BA$-modules,
\begin{equation*}
\tag{4.9.2}
\g_1^* : {}_{\BA}K(\SL') \isom {}_{\BA}(\BU_q'^-)^*,
\end{equation*}
which maps $[\Bk, \id] \in {}_{\BA}K(\SL'_0)$ to $1^* \in {}_{\BA}(\BU_q'^-)^*_0$, and 
commutes with the actions of $e_{j_1}'^{(n)}$ ($j \in J_1, n \in \BN$).  

\para{4.10.}
We consider the functor $\t = (\s_2\iv)^* : \SC'_{\b} \to \SC'_{\s_2(\b)}$ 
for any $\b \in Q_+^{\s_1}$. 
Note that the order of $\tau$ is equal to $\Bn_2$, which is a power of $\ell$.
Let $\BA' = \BF[q,q\iv]$ with $\BF = \BZ/\ell\BZ$. 
The discussion in Section 2 can be applied to our situation, by replacing 
$K_{\gp}$ (resp. $K_{\gm}$) by ${}_{\BA}K(\SP')$ (resp. ${}_{\BA}K(\SL')$) as follows.
Set ${}_{\BA'}K(\SP') = \BA' \otimes_{\BA}{}_{\BA}K(\SP')$, and 
define ${}_{\BA'}K(\SL')$ similarly. 
$\bB'$ (resp. $\bB'^*$) gives an $\BA'$-basis of $_{\BA'}K(\SP')$ 
(resp. ${}_{\BA'}K(\SL')$). 
Let ${}_{\BA'}K(\SP')^{\t}$ be the set of $\tau$-fixed elements in 
${}_{\BA'}K(\SP')$. Then 
$\{ O(b) \mid b \in \bB'\}$ gives an $\BA'$-basis of ${}_{\BA'}K(\SP')^{\tau}$.
Let $\CJ'$ be the subset of ${}_{\BA'}K(\SP')^{\tau}$ consisting of 
$O(x)$ for $x \in {}_{\BA'}K(\SP')$ such that $O(x) \ne x$.  
Then $\CJ'$ is an $\BA'$-submodule of ${}_{\BA'}K(\SP')^{\tau}$.
Similarly, $\BA'$-submodule $\CJ'^*$ of ${}_{\BA'}K(\SL')^{\tau}$ is defined.
We define the quotient modules $\CV_q'$ and $\CV'^*_q$ by 
\begin{equation*}
\tag{4.10.1}
\CV_q' = {}_{\BA'}K(\SP')^{\tau}/\CJ', \qquad 
    \CV'^*_q = {}_{\BA'}K(\SL')^{\tau}/\CJ'^*.
\end{equation*} 

\par
A $\s$-orbit $j$ in $I$ is regarded as a $\s_2$-orbit in $J_1$. 
Thus for each $j \in J$, one can write as $j = \{ j_1, \dots, j_t\}$ with $j_k \in J_1$. 
We define a functor $\wt F_j^{(n)}: \SP' \to \SP'$ by 
$\wt F_j^{(n)} = F_{j_t}^{(n)}\cdots F_{j_1}^{(n)}$. 
This induces an $\BA'$-homomorphism $\wt f_j^{(n)} = f_{j_t}^{(n)}\cdots f_{j_1}^{(n)}$
on ${}_{\BA'}K(\SP')$. 
As in 2.10, $\wt f_j^{(n)}$ commutes with $\tau$, and acts on $\CV'_q$.
\par
Similarly, for each $j \in J$, we define a functor 
$\wt E_j^{(n)} : \SL' \to \SL'$ by $\wt E_j^{(n)} = E_{j_1}^{(n)}\cdots E_{j_t}^{(n)}$.
This induces an $\BA'$-homomorphism $\wt e_j'^{(n)} = e_{j_1}^{(n)}\cdots e_{j_t}^{(n)}$
commuting with $\tau$. Thus we have an action of $\wt e_j'^{(n)}$ on $\CV_q'^*$.  
The pairing $\lp\ ,\ \rp : {}_{\BA}K(\SP') \times {}_{\BA}K(\SL') \to \BA$ 
induces a perfect pairing 
$\lp\ ,\ \rp : \CV_q' \times \CV_q'^* \to \BA'$, and $\wt e_j'^{(n)}$ 
coincides with the transpose of $\wt f_j^{(n)}$. 
\par
The quotient algebra $\BV_q' = {}_{\BA'}(\BU'^-_q)^{\s_2}/\BJ'$ 
of ${}_{\BA'}(\BU'^-_q)^{\s_2}$ is defined as in (1.6.1)  
by replacing $\s: \BU_q^- \to \BU_q^-$ by $\s_2 : \BU'^-_q \to \BU'^-_q$.   
Also the quotient module $\BV'^*_q = {}_{\BA'}(\BU'^-_q)^{*,\s_2}/\BJ'^*$ 
is defined as in (1.11.1). ($\BJ', \BJ'^*$ are the objects corresponding to 
$\BJ, \BJ^*$, respectively.) 
Note that the Cartan datum induced from $(X_1, \s_2)$ coincides with $\ul X$. 
Thus by Theorem 1.7 and Proposition 1.12, 
we see that there exist isomorphisms
\begin{equation*}
\tag{4.10.2}
\Phi_1 : {}_{\BA'}\ul\BU^-_q \isom \BV'_q, \qquad
\Phi_1^* : \BV'^*_q \isom {}_{\BA'}(\ul\BU^-_q)^*, 
\end{equation*}  
where under these isomorphisms, the actions of 
$\wt f^{(n)}_j$ (resp. $\wt e_j'^{(n)}$) 
corresponds to the actions of 
$f_j^{(n)}$ (resp. $e_j'^{(n)}$).  

\par
Since the isomorphism $\g_1 : {}_{\BA}\BU_q'^- \isom K(\SP')$ 
is compatible with the action of $\s_2$ and $\tau$, it induces an isomorphism 
${}_{\BA'}(\BU_q'^-)^{\s_2} \isom {}_{\BA'}K(\SP')^{\tau}$, which maps 
$\BJ'$ onto $\CJ'$.  
Similarly, the isomorphism 
$\g_1^* : K(\SL') \isom {}_{\BA}(\BU_q'^-)^*$ induces   
an isomorphism ${}_{\BA'}K(\SL')^{\tau} \isom {}_{\BA'}(\BU_q'^-)^{*,\s_2}$,
which maps $\CJ'^*$ onto $\BJ'^*$.
It follows that $\g_1$ and $\g_1^*$ induce isomorphisms 
\begin{equation*}
\tag{4.10.3}
\vT_1 : \BV_q' \isom \CV'_q, \qquad \vT_1^* : \CV_q'^* \isom \BV_q'^*, 
\end{equation*}  
where $\vT_1$ (resp. $\vT_1^*$ ) 
commutes with the actions of $\wt f_j^{(n)}$ 
(resp. $\wt e_j'^{(n)}$). 

\para{4.11.}
Let $\wt\BA$ be as in 3.11.  By Lemma 4.8, 
$\BA \subset \wt\BA \subset \BZ[\z_{\Bn_2}, q, q\iv]$.  We define 
\begin{equation*}
\wt\BA' = \BA'\otimes_{\BA}\wt\BA 
        \simeq \wt\BA /\ell \wt\BA. 
\end{equation*}
Then $\BA'$ is a subalgebra of $\wt\BA'$. 
\par
We compare the structure of ${}_{\wt\BA}K(\SL'')$ (resp. ${}_{\wt\BA}K(\SP'')$)
and $\CV'^*_q$ (resp. $\CV'_q$) by changing the base ring to $\wt\BA'$. 
Set ${}_{\wt\BA'}K(\SP') = \wt\BA'\otimes_{\BA'}{}_{\BA'}K(\SP')$, and 
${}_{\wt\BA'}K(\SL') = \wt\BA'\otimes_{\BA'}{}_{\BA'}K(\SL')$. 
The definition of $\CV'_q$ and $\CV'^*_q$ in (4.8.1) can be applied 
by replacing $\BA'$ by $\wt\BA'$, and one can define 
${}_{\wt\BA'}\CV'_q$ and ${}_{\wt\BA'}\CV_q'^*$ as the quotients of 
${}_{\wt\BA'}K(\SP')^{\tau}$ and ${}_{\wt\BA'}K(\SL')^{\tau}$, respectively, 
where $\CJ' \subset {}_{\wt\BA'}K(\SP')^{\tau}, \CJ'^* \subset {}_{\wt\BA'}K(\SL')^{\tau}$
are defined similarly.  
\par
Assume that $(M, \f_1, \f_2) \in \SL''_{\b}$ is a traceless element.
Then by 4.4, $(M,\f_1) \in \SL'_{\b}$ is traceless, or there exists 
$(M',\f') \in \SL'_{\b}$, and an integer $t \ge 2$ which is a divisor of $\Bn_2$, 
such that $(\s_2^*)^t(M',\f') \simeq (M',\f')$ and that  
\begin{equation*}
(M,\f_1) \simeq (M ',\f') \oplus \s_2^*(M ',\f') 
   \oplus \cdots \oplus (\s_2^*)^{t-1}(M ',\f'),
\end{equation*}
where $\f_2 : \s_2^*(M, \f_1) \isom (M,\f_1)$ is defined as in (4.4.1).
It follows that $[M,\f_1] = 0$ in $K(\SL'_{\b})$, or 
\begin{equation*}
[M, \f_1] = [M ',\f'] + \tau [M ', \f'] + \cdots + \tau^{t-1}[M ',\f'] \in \CJ'^*. 
\end{equation*} 
In particular, 
\par\medskip\noindent
(4.11.1) \ 
If $(M,\f_1, \f_2)$ is traceless, then $[M,\f_1] \in \CJ'^*$. 
\par\medskip

Let $(L, \f_1, \f_2) \in \SL''_{\b}$ be a simple object in $\SL''_{\b}$, 
which is not traceless, such that $[(L,\f_1, \f_2)] \in {}_{\wt\BA}K(\SL'')$. 
By [M, Thm. 3.6], $(L, \f_1)$ 
is a simple object in $\SL'_{\b}$ such that 
$\s_2^*(L,\f_1)  \isom  (L,\f_1)$. Here $L$ is a simple $R(\b)$-module, and 
is isomorphic to a self-dual simple module, up to degree shift. 
 By using the category equivalence
$\SL'' \simeq \SL$, 
there exist self-dual simple objects $(L, \f_1^{\bullet}, \f_2^{\bullet})$ in $\SL_{\b}''$.
Then $(L, \f_1^{\bullet})$ is self-dual in $\SL'_{\b}$, and there exists
a power $\z_1$ of $\z_{\Bn_1}$, with degree shift $q^a$, 
 such that $(L, \f_1^{\bullet}) = \z_1q^a(L, \f_1)$.  
By our assumption in 4.1, we may only consider the case where 
$\f_1^{\bullet} = \pm \f_1$, up to degree shift. 
Now if $\Bn_2$ is odd, 
$\f^{\bullet}_2 : \s_2^*(L,\f^{\bullet}_1) \isom (L,\f^{\bullet}_1)$ is unique, while 
if $\Bn_2$ is even, there exist exactly two $\pm \f^{\bullet}_2$ such that 
$(L, \f_1^{\bullet}, \f_2^{\bullet})$ is self-dual.  
Thus for $(L,\f_1, \f_2)$ as above, there exist $\z$ which is a power of $\z_{\Bn_2}$, 
with some degree shift $q^a$,
such that $(L, \f_1, \f_2) = \z q^a (L, \f_1^{\bullet}, \f_2^{\bullet})$. 
We attach $\z q^a[L, \f_1] \in {}_{\wt\BA'}K(\SL'_{\b})$ to 
$(L, \f_1, \f_2) \in \SL''_{\b}$. 
This gives a well-defined map since we are working on $\BF = \BZ/\ell\BZ$. 

\par
Take $(M,\f_1, \f_2) \in \SL_{\b}''$, and consider the composition series 
of $(M,\f_1,\f_2)$, where each composition factor is isomorphic to 
a simple object $(L,\f'_1, \f'_2) \in \SL''_{\b}$ 
such that $\s_2^*(L, \f'_1) \isom (L,\f'_1)$, 
or a traceless element. 
Accordingly, $(M,\f_1) \in \SC'_{\b}$ has a composition series, 
where each composition factor is isomorphic to  a simple object $(L, \f_1')$ 
such that $\s_2^*L \isom L$, or a traceless element in $\SL'_{\b}$.    
\par
In view of (4.11.1), we obtain  a well-defined $\wt\BA'$-module 
homomorphism 
\begin{equation*}
\tag{4.11.2}
\Psi^* : {}_{\wt\BA'}K(\SL'') \to {}_{\wt\BA'}\CV_q'^*,
\end{equation*}
where $[(L,\f_1,\f_2)]$ goes to $[\z q^a(L,\f_1)]$ for a simple module $L \in R(\b)\gmm$ 
such that $\s^*L \isom L$.  
By considering the transpose of (4.11.2), one can define an $\wt\BA'$-module 
homomorphism 
\begin{equation*}
\tag{4.11.3}
\Psi : {}_{\wt\BA'}\CV_q' \to {}_{\wt\BA'}K(\SP'').
\end{equation*}

Under the isomorphisms $K(\SL) \simeq K(\SL'')$, 
$K(\SP) \simeq L(\SP'')$, $e_j'^{(n)}$ (resp, $f_j^{(n)}$ ) 
acts on ${}_{\wt\BA'}K(\SL'')$ (resp. on  ${}_{\wt\BA'}K(\SP'')$).  
\begin{lem}  
\begin{enumerate}
\item The map $\Psi^*$ commutes with the action of
$e_j'^{(n)}$ on ${}_{\wt\BA'}K(\SL'')$, 
and that of $\wt e_j'^{(n)}$ on ${}_{\wt \BA'}\CV_q'^*$.
\item The map $\Psi$ commutes with the action of $f_j^{(n)}$ on 
${}_{\wt\BA'}K(\SP'')$, and that of $\wt f_j^{(n)}$ 
on ${}_{\wt\BA'}\CV_q'$.  
\end{enumerate}
\end{lem}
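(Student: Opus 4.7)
The plan is to prove (i) first; assertion (ii) then follows by duality. The pairings $\lpp\ ,\ \rpp$ on ${}_{\wt\BA'}K(\SP'') \times {}_{\wt\BA'}K(\SL'')$ (from (3.5.3)) and $\lp\ ,\ \rp$ on ${}_{\wt\BA'}\CV_q' \times {}_{\wt\BA'}\CV_q'^*$ (from 4.10) are perfect, and by Lemma 3.10 the operators $f_j^{(n)}$ and $e_j'^{(n)}$ are mutually adjoint, with the analogous adjunction for $\wt f_j^{(n)}$ and $\wt e_j'^{(n)}$ built into 4.10. Since $\Psi$ is the transpose of $\Psi^*$ under these pairings, the commutations in (i) and (ii) are equivalent, so it suffices to prove (i).

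By $\wt\BA'$-linearity and the definition of $\Psi^*$ via composition series from non-traceless simples (see (4.11.1)), it is enough to verify
\[
\Psi^*\bigl(e_j'^{(n)}[L,\f_1,\f_2]\bigr) \;=\; \wt e_j'^{(n)}\,\Psi^*[L,\f_1,\f_2]
\]
on a class coming from a simple object $(L,\f_1,\f_2) \in \SL''_{\b}$ with $\s^*L \isom L$. Write $j \in J$ as a $\s_2$-orbit $j=\{j^{(1)},\dots,j^{(s)}\}$ in $J_1$, so each $j^{(k)}$ is a $\s_1$-orbit in $I$. The key geometric input is the identification in $\SP'_{n\a_j}$: the underlying $\s_1$-equivariant object of $P(j)^{(n)}\in\SP_{n\a_j}\simeq\SP''_{n\a_j}$, obtained by forgetting the $\s_2$-structure under the decomposition $\s=\s_1\s_2$, is isomorphic to $P(j^{(1)})^{(n)} \circ \cdots \circ P(j^{(s)})^{(n)}$. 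Indeed, the underlying graded $R$-modules agree after grouping the tensor factors $P_i$ (for $i \in j$) by $\s_1$-orbit; the degree shifts match because $d_j = \sum_k d_{j^{(k)}}$ (both equal $d_i\cdot|j|$ for $i\in j$); and the $\s_1$-equivariant structures coincide, since $\s_1$ permutes each $\s_1$-orbit $j^{(k)}$ setwise. Substituting into (3.9.3) and iterating for $\wt E_j^{(n)} = E_{j^{(1)}}^{(n)}\circ\cdots\circ E_{j^{(s)}}^{(n)}$, this identification shows that the forgetful functor $\SC''\to\SC'$ intertwines $E_j^{(n)}$ with $\wt E_j^{(n)}$.

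To conclude, write $(L,\f_1,\f_2) = \z q^a(L,\f_1^{\blacklozenge},\f_2^{\blacklozenge})$ with $(L,\f_1^{\blacklozenge},\f_2^{\blacklozenge})$ self-dual and $\z$ a power of $\z_{\Bn_2}$, so that $\Psi^*[L,\f_1,\f_2] = \z q^a[L,\f_1^{\blacklozenge}]$ in ${}_{\wt\BA'}\CV_q'^*$. The functor $E_j^{(n)}$ is linear in the $\s_2$-equivariant structure of its argument: rescaling $\f_2$ by $\z$ multiplies the induced $\s_2$-structure on $E_j^{(n)}(L,\f_1,\f_2)$ by $\z$, because the convolution factor $P(j)^{(n)}$ carries a fixed $\s_2$-structure independent of the argument. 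Combining this with the forgetful-functor intertwining yields
\[
\Psi^*\bigl(e_j'^{(n)}[L,\f_1,\f_2]\bigr) \;=\; \z q^a\,\bigl[\wt E_j^{(n)}(L,\f_1^{\blacklozenge})\bigr] \;=\; \wt e_j'^{(n)}\,\Psi^*[L,\f_1,\f_2]
\]
in ${}_{\wt\BA'}\CV_q'^*$, which is the desired identity.

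The main obstacle I expect lies in the final scalar-propagation step: the simple composition factors of $E_j^{(n)}(L,\f_1,\f_2)$ in $\SL''_{\b - n\a_j}$ need not themselves be self-dual, so one must verify that the scalar $\z$ attached to each factor via comparison with its canonical self-dual lift agrees with the scalar $\z$ attached to $(L,\f_1,\f_2)$. This will reduce to the functoriality of the $\BZ[\z_{\Bn_2}]$-action on $K(\SL'')$ and to the fact that this action commutes with convolution against the fixed object $P(j)^{(n)}$.
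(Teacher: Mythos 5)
Your reduction of (ii) to (i) by transposition, and your treatment of the case $n=1$ (where $E_j$ applied to a $\SC''$-object is, on the underlying $\SC'$-object, exactly $\wt E_j = E_{j^{(1)}}\cdots E_{j^{(s)}}$ with the $\s_2$-structure uniquely induced), both agree with the paper. The gap is in your passage to general $n$: the asserted isomorphism between the underlying $\s_1$-equivariant object of $P(j)^{(n)}$ and $P(j^{(1)})^{(n)}\circ\cdots\circ P(j^{(s)})^{(n)}$ is false for $n\ge 2$. These divided-power objects are the \emph{indecomposable} projective covers of $L(j)^{(n)}$, resp. $L(j^{(k)})^{(n)}$ (this is forced by Theorem 2.7 and Theorem 3.12, which send $f^{(n)}$ to their classes), so $P(j)^{\circ n}$ contains $P(j)^{(n)}$ with graded multiplicity $[n]^!_j=[n]^!_{q^{d_j}}$, while $P(j^{(1)})^{\circ n}\circ\cdots\circ P(j^{(s)})^{\circ n}$ contains your candidate with multiplicity $\prod_k[n]^!_{j^{(k)}}$. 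Since $[n]^!_{q^{d_j}}\ne\bigl([n]^!_{q^{d_{j^{(k)}}}}\bigr)^{s}$ in $\BZ[q,q\iv]$, the two objects are not isomorphic, and even their classes differ; they agree only after reduction modulo $\ell$, because $s\mid\Bn_2$ is a power of $\ell$ and $([m]_{q^c})^{\ell^r}\equiv[m]_{q^{c\ell^r}} \pmod\ell$. Consequently your "key geometric input" cannot be established at the level of modules or functors, and with it collapses the claim that the forgetful functor intertwines $E_j^{(n)}$ with $\wt E_j^{(n)}$.

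The paper closes exactly this hole by a Grothendieck-group argument: having proved the $n=1$ case, it uses $[n]^!_j\,e'^{(n)}_j=e'^{\,n}_j$ on ${}_{\wt\BA'}K(\SL'')$ and $\bigl([n]^!_{j_k}\bigr)^{t}\,\wt e'^{(n)}_j=\wt e'^{\,n}_j$ on ${}_{\wt\BA'}\CV_q'^*$, invokes the mod-$\ell$ identity $\bigl([n]^!_{j_k}\bigr)^{t}=[n]^!_j$ in $\wt\BA'$, and then cancels the non-zero-divisor $[n]^!_j$ using that ${}_{\wt\BA'}\CV_q'^*$ is a free $\wt\BA'$-module. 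Some such arithmetic step is unavoidable --- it is the only place where the hypothesis that $\Bn_2$ is a prime power and the base change to $\BF=\BZ/\ell\BZ$ actually enter --- and your proposal never uses either hypothesis, which is a reliable sign that the argument cannot be complete. (Your closing "obstacle" about propagating the scalar $\z$ through composition factors is comparatively minor: it is absorbed into the well-definedness of $\Psi^*$ established in 4.11 via (4.11.1) and the composition-series discussion, so nothing further is needed there once the divided-power issue is repaired.)
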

\begin{proof}
(ii) follows from (i) by the discussion in 4.10, and by Lemma 3.10.
\par
We prove (i). First consider the case where $n = 1$.  Assume that $\b \in Q_+^{\s}$ 
such that $\b - \a_j \in Q_+^{\s}$.  
Take $(M,\f) \in \SL_{\b}$, which corresponds to $(M, \f_1, \f_2) \in \SL''_{\b}$
under the equivalence $\SL'' _{\b}\simeq \SL_{\b}$. 
By the discussion in 3.9, $E_j(M,\f)$ is written as 
$E_j(M, \f) = (\wt E_jM, \f')$, where $\wt E_j M$ is given as in (3.9.4), and 
$\f'$ is determined uniquely from $\f$.  Thus $E_j(M, \f_1, \f_2)$ is written as
$E_j(M, \f_1, \f_2) = (\wt E_j(M, \f_1), \f_2')$, where 
$\wt E_j(M,\f_1) \in \SL'_{\b - \a_j}$ 
is defined as in 4.10, and $\f_2'$ is determined uniquely from 
$\f_1, \f_2$.  
The functor $(M,\f_1) \mapsto \wt E_j(M,\f_1)$ induces the operation 
$\wt e'_j : K(\SL'_{\b}) \to K(\SL'_{\b-\a_j})$, and $\wt e'_j([M,\f_1])$ is 
$\tau$-invariant. Hence it induces the operation $\wt e'_j$ on ${}_{\wt\BA'}\CV_q'^*$. 
This shows that the map $\Psi^*$ commutes with the action of $e'_j$ on 
${}_{\wt\BA'}K(\SL'')$ and that of $\wt e'_j$ on ${}_{\wt\BA'}\CV_q'^*$. 
\par
We have $[n]^!_j{e'}^{(n)}_j = {e'}^n_j$ on ${}_{\wt\BA'}K(\SL'')$ by (3.14.1). 
On the other hand,  we have 
$\wt{e'}^{(n)}_j = {e'}^{(n)}_{j_t}\cdots {e'}^{(n)}_{j_1}$ on ${}_{\wt\BA'}K(\SL')$.
Note that $[n]^!_{j_k}{e'}^{(n)}_{j_k} = {e'}^n_{j_k}$,  
where $q_j = q^{(\a_j,\a_j)/2} = q^{t(\a_i,\a_i)/2} = q_i^t$ for $i = j_k$. 
Hence we have 
\begin{equation*}
([n]^!_i)^t\wt{e'}^{(n)}_j = \wt{e'}_j^n. 
\end{equation*} 
Since $([n]^!_i)^t = [n]^!_j$ in $\wt\BA'$, and ${}_{\wt\BA'}\CV_q'^*$ is 
a free $\wt\BA'$-module, the assertion holds for any $n$. Thus (i) is proved. 
\end{proof}

\begin{prop}  
\begin{enumerate}
\item
The map $\Psi: {}_{\wt\BA'}\CV_q' \to {}_{\wt\BA'}K(\SP'')$ 
gives an isomorphism  of $\wt\BA'$-modules.
Furthermore the isomorphism $\g : {}_{\wt\BA'}\ul\BU_q^- \isom {}_{\wt\BA'}K(\SP)$ 
factors through isomorphisms
\begin{equation*}
\tag{4.13.1}
\xymatrix@C=36pt@ R=30pt@ M =8pt{
\g : {}_{\wt\BA'}\ul\BU_q^-   \ar[r]^-{\Phi_1} 
       &  {}_{\wt\BA'}\BV'_q \ar[r]^-{\vT_1}  
     & {}_{\wt\BA'}\CV'_q \ar[r]^-{\Psi}  &  {}_{\wt\BA'}K(\SP), 
}
\end{equation*}
where $\Phi_1, \vT_1, \Psi$ commute with the action of $f_j^{(n)}$
or $\wt f_j^{(n)}$. 
\item
The map $\Psi^* : {}_{\wt\BA'}K(\SL'') \to {}_{\wt\BA'}\CV_q'^*$ gives 
an isomorphism of $\wt\BA'$-modules. 
Furthermore the isomorphism $\g^* : {}_{\wt\BA'}K(\SL) \isom {}_{\wt\BA'}(\ul\BU_q^-)^*$
factors through isomorphisms
\begin{equation*}
\tag{4.13.2}
\xymatrix@C=36pt@ R=30pt@ M =8pt{
\g^* : {}_{\wt\BA'}K(\SL'')   \ar[r]^-{\Psi^*} 
       &  {}_{\wt\BA'}\CV'^*_q \ar[r]^-{\vT^*_1}  
     & {}_{\wt\BA'}\BV'^*_q \ar[r]^-{\Phi_1^*}  &  {}_{\wt\BA'}(\ul\BU_q^-)^*, 
}
\end{equation*}
where $\Psi^*, \vT_1^*, \Phi_1^*$ commute with the action of 
$e_j'^{(n)}$ or $\wt e_j'^{(n)}$.
\end{enumerate}
\end{prop}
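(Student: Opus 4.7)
I will treat (i) and (ii) in parallel: the core content is that $\Psi$ and $\Psi^*$ are isomorphisms of $\wt\BA'$-modules, after which the factorizations (4.13.1), (4.13.2) are forced by compatibility with the operator actions and a cyclic-generation argument. All the remaining ingredients $\Phi_1, \Phi_1^*, \vT_1, \vT_1^*$ are already known isomorphisms by (4.10.2) and (4.10.3), and the critical intertwining property of $\Psi, \Psi^*$ with $\wt f_j^{(n)}, \wt e_j'^{(n)}$ and with $f_j^{(n)}, e_j'^{(n)}$ is the content of Lemma 4.12.

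\textbf{Step 1 (bijectivity of $\Psi^*$ and $\Psi$).} The $\wt\BA'$-basis $\ul\bB^*$ of ${}_{\wt\BA'}K(\SL'')$ consists of self-dual simple objects $(L, \f_1^{\bullet}, \f_2^{\bullet})$. By [M, Thm.~3.6] as recalled in 4.11, $L$ is a simple $R(\b)$-module with $\s^*L \simeq L$, and the projection $(L, \f_1^{\bullet}) \in \SL'_{\b}$ is a self-dual simple with $\s_2^*(L, \f_1^{\bullet}) \simeq (L, \f_1^{\bullet})$. Conversely, every such $\s_2$-fixed self-dual simple in $\SL'_{\b}$ lifts to a self-dual object in $\SL''_{\b}$, uniquely if $\Bn_2$ is odd and up to the sign $\f_2^{\bullet} \leftrightarrow -\f_2^{\bullet}$ if $\Bn_2$ is even. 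Consequently, the assignment in (4.11.2) carries $\ul\bB^*$ bijectively, up to a unit factor $\z q^a$ of $\wt\BA'$, onto the image in ${}_{\wt\BA'}\CV_q'^*$ of $(\bB'^*)^{\tau}$, which is an $\wt\BA'$-basis by the induction hypothesis. Hence $\Psi^*$ is an isomorphism. Dualizing under the pairing $\lpp\ ,\ \rpp$ from (3.5.3) and its counterpart $\CV_q' \times \CV_q'^* \to \wt\BA'$ from 4.10, and invoking Lemma 4.12 to transfer the adjointness, one deduces that $\Psi$ is an $\wt\BA'$-module isomorphism as well.

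\textbf{Step 2 (matching with $\g, \g^*$).} Form the composite
\[
\g' = \Psi \circ \vT_1 \circ \Phi_1 : {}_{\wt\BA'}\ul\BU_q^- \longrightarrow {}_{\wt\BA'}K(\SP)
\]
(using $K(\SP'') \simeq K(\SP)$). Each factor is a $\wt\BA'$-module isomorphism and intertwines the appropriate actions: $\Phi_1$ identifies $f_j^{(n)}$ on ${}_{\wt\BA'}\ul\BU_q^-$ with $\wt f_j^{(n)}$ on ${}_{\wt\BA'}\BV_q'$ (Theorem 1.7 applied to the pair $(X_1,\s_2)$, whose induced Cartan datum is $\ul X$); $\vT_1$ matches the $\wt f_j^{(n)}$-action between ${}_{\wt\BA'}\BV_q'$ and ${}_{\wt\BA'}\CV_q'$ by (4.10.3); and $\Psi$ intertwines $\wt f_j^{(n)}$ with $f_j^{(n)}$ by Lemma 4.12(ii). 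Furthermore, inspecting each map on the degree-zero component shows that $\g'(1) = [\Bk, \id]$. Since ${}_{\wt\BA'}\ul\BU_q^-$ is generated as an $\wt\BA'$-module by products $f_{j_1}^{(n_1)} \cdots f_{j_k}^{(n_k)}\cdot 1$ (via the generation of $\ul\BU_q^-$ by the $f_j$ together with the relation $[n]_j^! f_j^{(n)} = f_j^n$), any $\wt\BA'$-module map satisfying the above two properties is uniquely determined. The map $\g$ of Proposition 3.13(i) satisfies them as well, so $\g = \g'$, which establishes (4.13.1). The factorization (4.13.2) in (ii) now follows by transposing (4.13.1) under the perfect pairings and invoking Lemma 4.12(i) for the $e_j'^{(n)}/\wt e_j'^{(n)}$-intertwining.

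\textbf{Main obstacle.} The delicate point is Step 1 in the even case $\ell = 2$: both the sign ambiguity in the lift $(L,\f_1^{\bullet}) \leadsto (L, \f_1^{\bullet}, \pm \f_2^{\bullet})$ and the possible contribution of traceless summands must be controlled. The device (4.11.1), combined with the normalization $\z q^a [L, \f_1]$ built into the definition of $\Psi^*$ and the fact that one works modulo $\ell$ (so that both the sign and the root-of-unity ambiguities collapse inside the quotient $\CV_q'^*$), is precisely what renders the assignment well-defined on the quotient. A secondary bookkeeping concern is tracking the grading shifts $q^a$ arising from the convolution products that define the $\wt F_j^{(n)}$-functors and reconciling them with the shifts $q_j^{\binom n 2}$ appearing in the definitions of $P(j)^{(n)}$ and $P_j^{(n)}$; this is a routine degree computation but should be carried out carefully so that the intertwining in Lemma 4.12 holds on the nose and Step 2 goes through without stray $q$-powers.
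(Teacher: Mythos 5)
Your Step 2 is essentially the paper's entire proof: the paper defines $\wt\g = \Psi\circ\vT_1\circ\Phi_1$, checks via (4.10.2), (4.10.3) and Lemma 4.12 that it commutes with the $f_j^{(n)}$ and sends $1$ to $[(\Bk,\id,\id)]$, and concludes $\wt\g=\g$ because any $\wt\BA'$-module map with these two properties is determined by them (monomials in the $f_j^{(n)}$ applied to $1$ span ${}_{\wt\BA'}\ul\BU_q^-$); part (ii) is then obtained by transposition, exactly as you do. The difference is your Step 1. The paper never proves bijectivity of $\Psi$ (or $\Psi^*$) in advance: the uniqueness argument needs $\wt\g$ only to be a well-defined module map, not an isomorphism, so once $\g=\wt\g$ is known one simply reads off $\Psi=\g\circ\Phi_1\iv\circ\vT_1\iv$ as a composite of isomorphisms. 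Your direct basis-matching argument for $\Psi^*$ is therefore logically redundant, and it is also the least secure part of your write-up: the assertion that $\Psi^*$ carries the crystal-theoretically chosen basis $\ul\bB^*$ of [M, Thm.\ 10.8] bijectively, up to units, onto the image of $(\bB'^*)^{\tau}$ is precisely the statement the paper \emph{deduces} in 4.14 as a corollary of Proposition 4.13; establishing it up front requires reconciling the sign/root-of-unity normalizations in the two crystal-theoretic choices of basis, which you gesture at but do not carry out. I would recommend either deleting Step 1 and running Step 2 first (obtaining bijectivity of $\Psi$ as a consequence, as the paper does), or, if you keep Step 1, supplying the surjectivity argument onto $(\bB'^*)^{\tau}$ in detail. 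Your concluding remarks on the well-definedness of $\Psi^*$ in the even case correctly identify the content of paragraph 4.11 and (4.11.1), which the Proposition takes as already established.
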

\begin{proof}
We define a map $\wt\g : {}_{\wt\BA'}\ul\BU_q^- \to {}_{\wt\BA'}K(\SP'')$
as the composite of $\Phi_1, \vT_1$ and $\Psi$, 
\begin{equation*}
\xymatrix@C=36pt@ R=30pt@ M =8pt{
\wt\g : {}_{\wt\BA'}\ul\BU_q^-   \ar[r]^-{\Phi_1} 
       &  {}_{\wt\BA'}\BV'_q \ar[r]^-{\vT_1}  
     & {}_{\wt\BA'}\CV'_q \ar[r]^-{\Psi}  &  {}_{\wt\BA'}K(\SP''). 
}
\end{equation*}
By (4.10.2), the action of $f_j^{(n)}$ on ${}_{\wt\BA'}\ul\BU_q^-$ and that of
$\wt f_j^{(n)}$ on ${}_{\wt\BA'}\BV_q'$ are compatible with $\Phi_1$. 
By (4.10.3), the action of $\wt f_j^{(n)}$ is compatible with $\vT_1$.  
By Lemma 4.12, the action of $\wt f_j^{(n)}$ and that of 
$f_j^{(n)}$ are 
compatible with $\Psi$. It follows that $\wt\g$ is compatible with the action 
of $f_j^{(n)}$.  Moreover, $\wt\g$ maps $1 \in {}_{\wt\BA'}\ul\BU_q^-$ to 
$[(\Bk, \id, \id)] \in {}_{\wt\BA}K(\SP''_0)$.
On the other hand, by Proposition 3.13, we have an anti-algebra isomorphism 
$\g : {}_{\wt\BA'}\ul\BU_q^- \isom {}_{\wt\BA'}K(\SP)$, which commutes with 
the action of $f_j^{(n)}$,
under the identification $K(\SP) \simeq K(\SP'')$.  
Since $\g$ is determined uniquely by the action of $f_j^{(n)}$, we conclude that 
$\g = \wt\g$. 
In particular, $\Psi$ is an isomoprhism, and (i) is proved. 
(ii) follows from (i).  The proposition is proved. 
\end{proof}

\para{4.14.}
We are now ready to prove Theorem 3.16. 
Let $\ul\bB$ be the basis of ${}_{\wt\BA}K(\SP)$ given in 3.4, 
and $\ul\bB^*$ be the dual basis of ${}_{\wt\BA}K(\SL)$. 
Similarly, we obtain the basis $\bB'$ of $K(\SP')$, and its dual basis 
$\bB'^*$ of $K(\SL')$ over a suitable extension of $\BA$.   
But by our assumption in 4.1, we may assume that $\bB'$ is a basis of 
${}_{\BA}K(\SP')$, and $\bB'^*$ is the dual basis of ${}_{\BA}K(\SL')$. 
Let $(B')^{\tau}$ (resp. $(\bB'^*)^{\tau}$ be the set of $\tau$-fixed elements in 
$\bB'$ (resp. in $\bB'^*$).  Then the image $\ul\bB'$ of $(\bB')^{\tau}$ in $\CV_q$ 
gives an $\BA'$-basis of $\CV_q'$, and the image $\ul\bB'^*$ 
of $(\bB'^*)^{\tau}$ gives an $\BA'$-basis of $\CV_q'^*$.   
Now by the construction of the map 
$\Psi^* : {}_{\wt\BA'}K(\SL'') \to {}_{\wt\BA'}\CV_q'^*$, and by Proposition 4.13, 
$\Psi^*(\ul\bB^*)$ coincides with $\ul \bB'^*$.  
Hence $\Psi\iv (\ul\bB)$ coincides with $\ul\bB'$. 
In the diagram (4.13.1), the maps $\Phi_1, \vT_1$ are defined over $\BA'$. 
Thus $\g\iv(\ul\bB)$ gives an $\BA'$-basis of ${}_{\BA'}\ul\BU_q^-$. 
\par
On the other hand, since originally $\g$ is an isomorphism 
${}_{\wt\BA}\ul\BU_q^- \isom {}_{\wt\BA}K(\SP)$, we see that 
$\g\iv (\ul\bB)$ is an $\wt\BA$-basis of ${}_{\wt\BA}\ul\BU_q^-$.  
Fix $\b \in Q_-$, and let $Y$ (resp. $Y'$) 
be the $\wt\BA$-submodule (resp. $\BA$-submodule) of $\ul\BU_q^-$
spanned by $\g\iv(\ul\bB_{\b})$.  Then $Y/Y'$ is a finitely generated $\BZ$-module, 
and by the above discussion, $\ell(Y/Y') = 0$. Hence by 
Nakayama's lemma, $Y$ coincides with $Y'$ if the base ring is extended 
from $\BA = \BZ[q,q\iv]$ to $\BZ_{\ell}[q,q\iv]$, 
where $\BZ_{\ell}$ is the ring of $\ell$-adic integers. 
It follows that, for $b,b' \in \g\iv(\ul\bB)$, we have
$bb' = \sum_{b'' \in \g\iv(\ul\bB)}a_{bb'}^{b''}b''$, where
\begin{equation*}
a_{bb'}^{b''} \in \BZ[\z_{\Bn_2}, q, q\iv] \cap \BZ_{\ell}[q,q\iv] = \BZ[q, q\iv].
\end{equation*} 
Thus $\g\iv(\ul\bB)$ is an $\BA$-basis of ${}_{\BA}\BU_q^-$.  Since 
$\g$ is an anti-algebra isomorphism, we see that the structure constants 
of the basis $\ul\bB$ all lie in $\BA$, namely we have $\wt\BA = \BA$.
Theorem 3.16 is proved.  
\par\medskip
As a corollary to Theorem 3.16, a refinement of Proposition 4.13 
is obtained. 

\begin{cor}  
\begin{enumerate}
\item \ There exist isomorphisms of $\BA$-modules, 
\begin{equation*}
\g : {}_{\BA}\ul\BU_q^- \isom {}_{\BA}K(\SP), \qquad
\g^* : {}_{\BA}K(\SL) \isom {}_{\BA}(\ul\BU_q^-)^*. 
\end{equation*}
\item  \ The isomorphisms $\g, \g^*$ can be factored, as $\BA'$-modules, through  
\begin{align*}
\tag{4.15.1}
&\xymatrix@C=36pt@ R=30pt@ M =8pt{
\g : {}_{\BA'}\ul\BU_q^-   \ar[r]^-{\Phi_1} 
       &  {}_{\BA'}\BV'_q \ar[r]^-{\vT_1}  
     & {}_{\BA'}\CV'_q \ar[r]^-{\Psi}  &  {}_{\BA'}K(\SP), 
}  \\
\tag{4.15.2}
&\xymatrix@C=36pt@ R=30pt@ M =8pt{
\g^* : {}_{\BA'}K(\SL)   \ar[r]^-{\Psi^*} &  {}_{\BA'}\CV_q'^* \ar[r]^-{\vT_1^*}  
     & {}_{\BA'}\BV_q'^* \ar[r]^-{\Phi_1^*}  &  {}_{\BA'}(\ul\BU_q^-)^*. 
}
\end{align*}
\end{enumerate}
\end{cor}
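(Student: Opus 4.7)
The plan is to obtain the corollary as a direct specialization of Proposition 3.13 and Proposition 4.13 via the identity $\wt\BA = \BA$ just established in Theorem 3.16. No new categorical or combinatorial input is required; the work is purely book-keeping about the base ring.

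For (i), Proposition 3.13 gives isomorphisms $\g : {}_{\wt\BA}\ul\BU_q^- \isom {}_{\wt\BA}K(\SP)$ and $\g^* : {}_{\wt\BA}K(\SL) \isom {}_{\wt\BA}(\ul\BU_q^-)^*$ of $\wt\BA$-modules, and Theorem 3.16 asserts $\wt\BA = \BA$. Reading the same maps with this identification immediately yields the stated isomorphisms of $\BA$-modules.

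For (ii), with $\wt\BA = \BA$ we have
\begin{equation*}
\wt\BA' \;=\; \BA'\otimes_{\BA}\wt\BA \;=\; \BA'\otimes_{\BA}\BA \;=\; \BA',
\end{equation*}
so the factorization (4.13.1) of Proposition 4.13, stated over $\wt\BA'$, becomes exactly the chain (4.15.1) over $\BA'$, and similarly (4.13.2) specializes to (4.15.2). All constituent maps already make sense on the smaller ring: $\Phi_1, \Phi_1^*$ are defined over $\BA'$ by (4.10.2) (a direct consequence of Theorem 1.7 and Proposition 1.12), the maps $\vT_1, \vT_1^*$ are defined over $\BA'$ by (4.10.3) (using the induction hypothesis $\wt\BA_1 = \BA$ from 4.7 together with $\BA'$-linearity of the quotient construction in (4.10.1)), and finally $\Psi, \Psi^*$ were constructed in (4.11.2)–(4.11.3) over $\wt\BA'$, which now coincides with $\BA'$. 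The compatibility of each arrow with $f_j^{(n)}$, $\wt f_j^{(n)}$ (resp. $e_j'^{(n)}$, $\wt e_j'^{(n)}$) has already been recorded in the proof of Proposition 4.13 and survives the specialization unchanged.

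There is essentially no obstacle: the only thing to check is that the descent of $\Psi$ and $\Psi^*$ from $\wt\BA'$ to $\BA'$ is well-defined, and this is automatic once $\wt\BA' = \BA'$. The substantive work — verifying that the structure constants of $\ul\bB$ lie in $\BA$, which is what allows this descent — is precisely the content of Theorem 3.16 carried out in Section 4.
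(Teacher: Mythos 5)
Your proposal is correct and matches the paper's (implicit) argument exactly: the paper derives Corollary 4.15 simply by combining Theorem 3.16 ($\wt\BA = \BA$, hence $\wt\BA' = \BA'$) with Propositions 3.13 and 4.13, which is precisely the specialization you carry out. The only remark is that $\Phi_1, \Phi_1^*$ and $\vT_1, \vT_1^*$ were already defined over $\BA'$ in (4.10.2)--(4.10.3), so the descent really only concerns $\Psi, \Psi^*$, as you correctly note.
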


\par\bigskip
\section{ KLR algebras and global bases }

\para{5.1.}
In this section, we assume that $R = \bigoplus_{\b \in Q_+}R(\b)$ is symmetric type, 
namely, the Cartan matrix $A = (a_{ij})$ is symmetric, and 
$Q_{i,j}(u,v)$ is a polynomial in $u - v$.  Furthermore, we assume that 
$\Bk$ is an algebraically closed field of characteristic 0. 
We consider the isomorphisms  $\g : {}_{\BA}\ul\BU_q^- \isom {}_{\BA}K(\SP)$
and $\g^*: {}_{\BA}K(\SL) \isom {}_{\BA}\ul\BU_q^-$ as in Corollary 4.15. 
In the last paragraph of [M], he states a result that $\g\iv(\ul\bB)$ coincides with 
the canonical basis, and also coincides with the lower global basis of $\ul\BU_q^-$, 
with a brief indication for the proof, which relies on the geometric argument. 
In this section we give a simple proof of this fact, without using the geometry.
\par
Let $\wt \vT_0: {}_{\BA}\BU_q^- \isom K_{\gp}$ be the isomorphism of 
$\BA$-algebras as given in Theorem 2.7. 
Recall that $\bB$ is the basis of $K_{\gp}$, and $\bB^*$ is the basis of $K_{\gm}$ 
given in 2.4.  
Let $\CB^*$ be the upper global basis of ${}_{\BA}(\BU_q^-)^*$ as given 
in Theorem 1.10, and $\CB$ the lower global basis of ${}_{\BA}\BU_q^-$.  
Also let $\CB\nat$ be the canonical basis of ${}_{\BA}\BU_q^-$.  
In the case where $X$ is symmetric, it is known by Grojnowski-Lusztig [GL]
that $\CB$ coincides with $\CB\nat$.  
\par 
The following result was proved by Varagnolo-Vasserot [VV], and Rouquier [R2], 
by using the Ext algebra obtained from  Lusztig's category $\CQ_V$
related to the geometry of quivers (see Introduction). 

\begin{thm} [{[VV], [R2]}]  
Under the setup in 5.1, the isomorphisms $\wt\vT_0 : {}_{\BA}\BU_q^- \isom K_{\gp}$  
sends $\CB\nat$ to $\bB$. 
\end{thm}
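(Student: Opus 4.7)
The plan is to realize $R(\b)$ geometrically as a $G_\b$-equivariant Ext algebra, and to transport Lusztig's geometric identification of $\CB\nat$ with simple perverse sheaves to the algebraic side through this realization.

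After fixing an orientation, form the representation variety $E_\b$ of the resulting quiver $\overrightarrow{Q}$, acted on by $G_\b = \prod_i \operatorname{GL}(V_i)$. For each $\nu \in I^\b$ consider the flag variety $F_\nu$ of complete $I$-graded flags of type $\nu$ with its proper $G_\b$-equivariant projection $\pi_\nu : F_\nu \to E_\b$, and set $L_\nu = (\pi_\nu)_* \underline{\Ql}[\dim F_\nu]$. Lusztig's category $\CQ_{E_\b}$ is the additive subcategory of $D^b_{G_\b}(E_\b)$ generated by shifts of simple perverse summands of $\bigoplus_\nu L_\nu$; by [L1], its Grothendieck group is canonically identified with $({}_{\BA}\BU_q^-)_{-\b}$, with the classes of simple perverse sheaves constituting $\CB\nat_{-\b}$. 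The central ingredient from [VV], [R2] is a graded $\Bk$-algebra isomorphism
\[
R(\b) \isom \operatorname{Ext}^\bullet_{D^b_{G_\b}(E_\b)}\Bigl(\bigoplus_\nu L_\nu, \bigoplus_\nu L_\nu\Bigr),
\]
in which $e(\nu)$ is the projection onto $L_\nu$, $x_k$ is a Chern class of a tautological line bundle on $F_\nu$, and $\tau_k$ is convolution with a natural fundamental class on $F_\nu \times_{E_\b} F_{s_k\nu}$.

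Granting this isomorphism, the functor $P \mapsto P \otimes_{R(\b)} \bigoplus_\nu L_\nu$ induces an equivalence between $R(\b)\gpp$ and the additive subcategory of $D^b_{G_\b}(E_\b)$ generated by shifts of the $L_\nu$. By the Beilinson--Bernstein--Deligne--Gabber decomposition theorem, the indecomposable objects in this subcategory are (shifts of) simple perverse sheaves in $\CQ_{E_\b}$, and the self-duality conditions match on both sides. Passing to Grothendieck groups, the basis $\bB_\b$ of self-dual indecomposable projectives therefore corresponds bijectively to the set of simple perverse sheaves in $\CQ_{E_\b}$, which is $\CB\nat_{-\b}$ under Lusztig's identification, yielding $\wt\vT_0(\CB\nat) = \bB$.

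The main obstacle is establishing the geometric realization of $R(\b)$ as an Ext algebra: one must verify every KLR relation from 2.1 in equivariant cohomology, the most delicate being the quadratic relation $\tau_k^2 e(\nu) = Q_{\nu_k,\nu_{k+1}}(x_k,x_{k+1}) e(\nu)$ and the modified braid relation (6). Both reduce to computing pullback--pushforward along the correspondences $F_\nu \times_{E_\b} F_{s_k\nu}$, and the polynomials $Q_{i,i'}$ must be identified with Euler classes of appropriate normal bundles. Once this is in place, the identification $\wt\vT_0(\CB\nat) = \bB$ follows formally from the decomposition theorem and Lusztig's original construction in [L1].
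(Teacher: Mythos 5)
The paper does not actually prove this statement: it is imported wholesale from [VV] and [R2], and the strategy you describe --- realizing $R(\b)$ as the Yoneda algebra of $\bigoplus_\nu L_\nu$ in $D^b_{G_\b}(E_\b)$ and then invoking the decomposition theorem to match indecomposable self-dual projectives with simple perverse sheaves in $\CQ_{E_\b}$ --- is precisely the strategy of those papers. So at the level of approach you are aligned with the source the paper relies on.

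As a proof, however, your write-up has genuine gaps beyond the relation-checking you flag as the ``main obstacle.'' First, verifying the KLR relations in equivariant cohomology only produces an algebra homomorphism $R(\b) \to \operatorname{Ext}^\bullet\bigl(\bigoplus_\nu L_\nu, \bigoplus_\nu L_\nu\bigr)$; the substantive content of [VV] is that this map is an \emph{isomorphism}, which requires comparing the basis theorem for $R(\b)$ (the free $\BP[x_1,\dots,x_n]$-basis $\{\tau_w\}$ as in (3.7.2)) with a computation of the graded dimension of the Ext algebra via the Borel--Moore homology of $F_\nu \times_{E_\b} F_{\nu'}$, using properness of $\pi_\nu$ and purity. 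Without this, the claimed equivalence between $R(\b)\gpp$ and the additive category generated by shifts of the $L_\nu$ does not follow. Second, even granting that equivalence, you must still check that the induced isomorphism of Grothendieck groups coincides with the specific map $\wt\vT_0$ of Theorem 2.7, i.e.\ that the convolution product $\circ$ on KLR modules corresponds to Lusztig's induction functor and that $[P_i^{(n)}]$ matches the class of the object representing $f_i^{(n)}$; otherwise you have only shown that \emph{some} isomorphism ${}_{\BA}\BU_q^- \simeq K_{\gp}$ carries $\CB\nat$ to $\bB$. Third, the normalization is not automatic: $\bB$ consists of \emph{self-dual} indecomposable projectives, and matching these with the \emph{unshifted} simple perverse summands requires tracking the duality $\BD$ of 2.4 against Verdier duality through the Ext-algebra equivalence, which you assert but do not argue. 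All of this is carried out in [VV] and [R2]; it is the actual content of the theorem rather than a formal consequence of the outline.
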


\para{5.3.}
Since $\CB\nat$ is invariant under $*$, $\CB\nat$ corresponds to $\bB$ 
under the isomorphism $\wt\vT : {}_{\BA}\BU_q^- \isom K_{\gp}$ 
in Proposition 2.9.  Since the basis $\CB\nat$ is almost orthonormal, 
$\bB$ is also almost orthonormal in the sense of 1.4, i.e., 
for $b, b' \in \bB$, 
\begin{equation*}
\tag{5.3.1}
(b,b') \in \begin{cases}
              1 + q\BZ[[q]]  &\quad\text{ if $b = b'$, } \\
              q\BZ[[q]]      &\quad\text{ if $b \ne b'$.}
           \end{cases}
\end{equation*}     

We want to show that $\ul\bB$ has a similar property. 

\begin{prop}  
Assume that $b \in \ul\bB$.  Then we have $\BD b = b$, and 
\begin{equation*}
\tag{5.4.1}
(b,b') \in \begin{cases}
              1 + q\BZ[[q]] &\quad\text{ if $b = b'$, }  \\
              q\BZ[[q]]     &\quad\text{ if $b \ne b'$. }
           \end{cases}
\end{equation*}
\end{prop}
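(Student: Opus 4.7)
The plan is to reduce the computation of the pairing $(b,b')$, via formula (3.5.4), to the almost orthonormality of the basis $\bB \subset K_{\gp}$ supplied by Theorem 5.2 together with the observation (5.3.1). The self-duality $\BD b = b$ is immediate from the definition of $\ul\bB$ in 3.4 as the set of isomorphism classes of self-dual finitely generated projective indecomposable objects in $\SP_{\b}$, so only the inner-product assertion requires work.

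Write $b = [(P,\f)]$ and $b' = [(Q,\f')]$ in $\ul\bB$. By Lemma 4.6 the underlying $R(\b)$-modules $P,Q$ are projective indecomposable, and self-duality of $(P,\f),(Q,\f')$ in $\SP_{\b}$ forces $\BD P \simeq P$ and $\BD Q \simeq Q$ in $R(\b)\gpp$, so $[P],[Q] \in \bB$. Formula (3.5.4) gives
\begin{equation*}
(b,b') \;=\; \sum_{d\in\BZ} \Tr\bigl(\f\otimes\f',\,(P^{\psi}\otimes_{R(\b)}Q)_d\bigr)\,q^d.
\end{equation*}
Since $\ch\Bk = 0$ and $\f,\f'$ have finite order, $\f\otimes\f'$ is semisimple with roots-of-unity eigenvalues, so each trace coefficient is an algebraic integer in $\BZ[\z_{\Bn}]$ whose complex absolute value is bounded by $\dim(P^{\psi}\otimes_{R(\b)}Q)_d$. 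By Corollary 4.15 the isometry $\g$ is defined over $\BA$ (Theorem 3.12), so $(b,b') \in \BQ(q) \cap \BZ[\z_{\Bn}]((q))$, which pins each trace coefficient into $\BZ$.

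If $b \ne b'$, then $P$ and $Q$ cannot be isomorphic in $R(\b)\gpp$ up to grading shift: otherwise $[(P,\f)]$ and $[(Q,\f')]$ would differ by a root of unity in $K(\SP)$, contradicting linear independence of $\ul\bB$. Hence $[P] \ne [Q]$ in $\bB$, and (5.3.1) yields $\dim(P^{\psi}\otimes_{R(\b)}Q)_0 = 0$, so the degree-zero coefficient of $(b,b')$ vanishes and $(b,b') \in q\BZ[[q]]$. If $b = b'$, then $\dim(P^{\psi}\otimes_{R(\b)}P)_0 = 1$ and the degree-zero trace of $\f\otimes\f$ is an integer root of unity, hence $\pm 1$. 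The main obstacle is to rule out the sign $-1$: one should normalize a self-duality isomorphism $\vP : \BD P \isom P$ so that $\BD\vP = \vP$ and show, using the compatibility of $\vP$ with $\f$ arising from self-duality in $\SP_{\b}$, that $\f\otimes\f$ acts as the identity on the canonical line $(P^{\psi}\otimes_{R(\b)}P)_0 \simeq \Bk\cdot\vP$. Once this sign is controlled, the integer bound established above yields $(b,b) \in 1 + q\BZ[[q]]$, completing the proof.
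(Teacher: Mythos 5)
Your strategy coincides with the paper's: reduce to the almost orthonormality (5.3.1) of $\bB$ via Lemma 4.6, compute the pairing by (3.5.4) (equivalently through (3.5.2)), and use the isometry of Corollary 4.15 to intersect $\BZ[\z_{\Bn}]((q))$ with $\BQ((q))$ and land in $\BZ((q))$. The treatment of $\BD b = b$ and of the case $b \ne b'$ is fine. The one genuine gap is precisely the point you yourself flag as ``the main obstacle'': for $b = b'$ you establish only that the degree-zero trace is $\pm 1$, and you defer the elimination of the sign $-1$ to a normalization of a self-duality isomorphism $\vP : \BD P \isom P$ that you do not carry out. Since the whole content of (5.4.1) in the diagonal case is that the constant term is $+1$ rather than $-1$ (or a root of unity), the proof is not complete as written.

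The paper closes this gap with a short argument that needs no normalization of $\vP$: by (3.5.2) (which also holds for projective $M$) together with $\BD P_0 \simeq P_0$, the one-dimensional space $(P_0^{\psi}\otimes_{R(\b)}P_0)_0$ is identified with $\Hom_{R(\b)}(P_0,P_0)_0 = \Bk\cdot \id$, and under this identification $\f\otimes\f$ acts by conjugation $f \mapsto \f\iv\circ f\circ \f$; here the self-duality of the object $(P_0,\f)$ in $\SP_{\b}$ (not merely of the module $P_0$) is what makes the identification compatible with the two $\f$-actions. Conjugation fixes the identity, so the degree-zero trace is $1$ on the nose, with no case analysis on roots of unity. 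You could complete your argument the same way; the normalization of $\vP$ you propose is a heavier route to the same fact.
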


\begin{proof}
Take $b, b' \in \ul\bB$, and write it as $b = [P], b' = [Q]$ for 
$P = (P_0,\f), Q = (Q_0,\f') \in \SP_{\b}$. The relation 
$\BD b = b$ is clear from the definition.
We show (5.4.1).  By Lemma 4.6, $P_0$ is an indecomposable projective 
$R(\b)$-module.  Since $(P_0,\f)$ is self-dual, $P_0$ is also self-dual. 
Thus $[P_0]$ corresponds to some element in $\bB$.  Hence by (5.3.1), 
$P_0$ satisfies the relation $([P_0], [P_0]) \in 1 + q\BZ[[q]]$.
We compare the inner product (2.5.3) for $K_{\gp}(\b)$ and the inner product 
(3.5.4) for $K(\SP_{\b})$. We show that
\begin{equation*}
\tag{5.4.2}
([P], [Q]) \in \begin{cases}
                1 + q\BZ[\z_{\Bn}][[q]] &\quad\text{ if $[P] = [Q]$, } \\
                q\BZ[\z_{\Bn}][[q]]      &\quad\text{ if $[P] \ne [Q]$. }
               \end{cases}
\end{equation*}
Since $\BD P_0 \simeq P_0$, by using the relation (3.5.2) (it also holds 
if $M$ is projective), in the definition of 
the inner product on $K_{\gp}(\b)$ and on $K(\SP_{\b})$, one can replace 
the tensor product $P_0^{\psi}\otimes_{R(\b)}P_0$ by the Hom space 
$\Hom_{R(\b)}(P_0,P_0)$. Then (5.3.1) shows that 
$\dim_{\Bk} \Hom_{R(\b)}(P_0, P_0)_0 = 1$, and 
$\dim_{\Bk} \Hom_{R(\b)}(P_0, q^kP_0)_0 = 0$ for
$k > 0$. In particular, the grading preserving homomorphisms $P_0 \to P_0$ are 
only the scalar multiplications.  
The action of $\f\otimes\f$ on $(P_0^{\psi}\otimes_{R(\b)}P_0)_0$ corresponds 
to the action $f \mapsto \f\iv \circ f \circ \f$ for $\Hom_{\BR}(P_0,P_0)_0$, 
hence it induces the identity map on $\Hom_{R(\b)}(P_0,P_0)_0$.  
This shows the first formula in (5.4.2).  
The second formula is obtained similarly from the relation 
$\dim \Hom_{R(\b)}(P_0, q^kQ_0) = 0$ for $k > 0$.   
\par
Now by Corollary 4.15 (i), we have an isometry
$\g: {}_{\BA}\ul\BU_q^- \isom  {}_{\BA}K(\SP)$. 
Hence $([P], [Q]) \in \BQ((q))$.  Combined with (5.4.2), 
we obtain (5.4.1). The proposition is proved.  
\end{proof}

\begin{cor}  
Let $\ul\CB$ be the lower global basis of ${}_{\BA}\ul\BU_q^-$, and 
$\ul\CB^*$ the upper global basis of ${}_{\BA}(\ul\BU_q^-)^*$.
Then $\g\iv(\ul\bB)$ coincides with $\ul\CB$ or with $\ul\CB\nat$, up to sign, 
and $\g^*(\ul\bB^*)$ coincides with $\ul\CB^*$, up to sign.
\end{cor}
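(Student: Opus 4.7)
The plan is to apply Lusztig's characterization of the canonical signed basis recalled in 1.4: an $\BA$-basis $\{b\}$ of ${}_{\BA}\ul\BU_q^-$ satisfying $\ol b = b$ together with $(b,b) \in 1 + q\BZ[[q]]$ and $(b,b') \in q\BZ[[q]]$ for $b \ne b'$ must coincide with $\pm\ul\CB\nat$. Under the symmetric-type hypothesis of Section 5, the Grojnowski--Lusztig identification [GL] (or, equivalently, [K1, Prop. 5.1.2]) gives $\ul\CB\nat = \ul\CB$. Proposition 5.4 already provides almost orthonormality and self-duality of $\ul\bB$ inside ${}_{\BA}K(\SP)$, and since $\g$ is an isometry by Corollary 4.15 (i), the almost orthonormality transfers immediately to $\g\iv(\ul\bB)$ inside ${}_{\BA}\ul\BU_q^-$.

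The remaining ingredient is bar-invariance: $\ol{\g\iv(b)} = \g\iv(b)$ for every $b \in \ul\bB$. I would verify this by checking that $\g$ intertwines the bar involution on ${}_{\BA}\ul\BU_q^-$ with the duality $\BD$ on ${}_{\BA}K(\SP)$. On generators this is immediate: $\g_0$ sends $f_j^{(n)}$ (which is bar-fixed because each quantum integer $[k]_j$ is bar-invariant) to $[P(j)^{(n)}]$, and $P(j)^{(n)}$ is self-dual by [M, Lemma 7.2]. Both operations invert $q$: bar by definition, $\BD$ via the grading-shift convention $\BD(qM) = q\iv\BD M$. The bar involution is an algebra automorphism while $\BD$ reverses convolution, but $\g = \g_0 \circ *$ is an anti-algebra isomorphism, so the two anti-structures cancel and the intertwining on generators propagates to all of ${}_{\BA}\ul\BU_q^-$. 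Combined with $\BD b = b$ from Proposition 5.4, this gives $\g(\ol{\g\iv(b)}) = \BD b = b$, hence $\ol{\g\iv(b)} = \g\iv(b)$.

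With bar-invariance and almost orthonormality secured, the characterization of 1.4 yields $\g\iv(\ul\bB) = \pm\ul\CB\nat = \pm\ul\CB$, proving the first assertion. For the dual statement $\g^*(\ul\bB^*) = \pm\ul\CB^*$, observe that $\g^*$ is by its very construction in Proposition 3.13 the transpose of $\g$ with respect to the perfect pairings $\lpp\ ,\ \rpp$ on $K(\SP)\times K(\SL)$ and $\lp\ ,\ \rp$ on $\ul\BU_q^- \times (\ul\BU_q^-)^*$. Since $\ul\bB$ and $\ul\bB^*$ are dual bases by (3.5.3) and $\ul\CB$, $\ul\CB^*$ are dual bases by the definition of the upper global basis, the image $\g^*(\ul\bB^*)$ is forced to be the dual basis of $\g\iv(\ul\bB) = \pm\ul\CB$, and hence coincides with $\pm\ul\CB^*$. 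The hard part is the bar-versus-$\BD$ intertwining: one must carefully reconcile the anti-algebra nature of $\g$, the contravariance of $\BD$, and the grading-shift conventions so that compatibility checked on generators really does extend to the whole algebra; everything else is a bookkeeping consequence of Proposition 5.4 and Corollary 4.15.
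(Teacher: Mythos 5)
Your argument follows the paper's proof: Proposition 5.4 gives self-duality ($\BD b = b$) and almost orthonormality of $\ul\bB$, the isometry of Corollary 4.15 transports this to $\g\iv(\ul\bB)$, and the characterization (1.4.3) of the canonical signed basis $\wt\CB$ then forces $\wt\CB = \g\iv(\ul\bB)\sqcup -\g\iv(\ul\bB)$; the dual statement follows by transposing with respect to the perfect pairings. The paper leaves the bar-invariance of $\g\iv(\ul\bB)$ implicit in the inclusion $\g\iv(\ul\bB)\subset\wt\CB$, and your plan to deduce it from $\BD b = b$ by checking that $\g$ intertwines the bar involution with $\BD$ is the right way to fill that in --- though the variance of $\BD$ with respect to $\circ$ and the attendant degree shifts in $\BD(P\circ Q)$ deserve an actual verification rather than the assertion that ``the two anti-structures cancel''; as written that bookkeeping is asserted, not proved.

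One genuine misstatement: the equality $\ul\CB\nat = \ul\CB$ does \emph{not} follow from [GL] here. The symmetric-type hypothesis of Section 5 concerns $X$ and $R$, not the folded datum $\ul X$; the algebra $\ul\BU_q^-$ is in general of non-symmetric type, and the coincidence $\ul\CB = \ul\CB\nat$ is precisely the content of Theorem 5.16, proved later by a separate crystal-theoretic argument (compare Remark 5.13(ii) on the failure of positivity in non-symmetric type --- if [GL] applied directly, Theorem 5.16 would be vacuous). Fortunately your proof of the corollary never actually uses this equality: once $\g\iv(\ul\bB)$ is shown to lie in $\wt\CB$ and to be an almost orthonormal $\BA$-basis, it agrees up to sign with whichever of $\ul\CB$, $\ul\CB\nat$ one prefers, which is all the corollary claims.
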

\begin{proof}
By Proposition 5.4, $\g\iv(\ul\bB)$ gives an almost orthonormal basis 
in ${}_{\BA}\ul\BU_q^-$. Since $\g\iv(\ul\bB) \subset \wt\CB$, 
we have $\wt\CB = \g\iv(\ul\bB) \sqcup - \g\iv(\ul\bB)$. 
Hence $\g\iv(\ul\bB)$ coincides with $\ul\CB$ or with $\ul\CB\nat$, up to sign. 
The second statement 
follows easily from this.  
\end{proof}

\para{5.6.}
Take $j \in J$. Consider the operator $E_j : \SL_{\b} \to \SL_{\b -\a_j}$, 
for $\b \in Q_+^{\s}$ such that $\b-\a_j \in Q_+^{\s}$, defined 
as in (3.9.3) ($E_j = E_j^{(n)}$ with $n = 1$). 
$E_j$ induces an operator $e_j': K(\SL_{\b}) \to K(\SL_{\b - \a_j})$.
For $\b \in Q_+^{\s}$, we define a functor 
$F_j': \SL_{\b} \to \SL_{\b + \a_j}$ by 
\begin{equation*}
F_j' : M \mapsto M \circ L_j.
\end{equation*}
Since the convolution product is an exact functor, $F_j'$ is an exact 
functor. Thus it induces an operator $f_j : K(\SL_{\b}) \to K(\SL_{\b + \a_j})$.  
\par
For a simple object $M \in \SL_{\b}$, we set 

\begin{equation*}
\tag{5.6.1}
\ve_j(M) = \max\{ k \ge 0 \mid E_j^kM \ne 0\}.
\end{equation*} 
We define crystal operators 
$\wt\BE_j M \in \SL_{\b-\a_j}$, $\wt\BF_j M \in \SL_{\b + \a_j}$ 
by
\begin{equation*}
\tag{5.6.2}
\begin{aligned}
\wt\BE_j M &= q_j^{1 - \ve_j(M)}\soc E_jM, \\
\wt\BF_j M &= q_j^{\ve_j(M)}\head F'_jM.
\end{aligned}
\end{equation*}

If we use the restriction functor defined in 3.6, $E_jM$ is written as 
\begin{equation*}
E_jM \simeq \Hom_{R(\a_j)}(L(j), \Res_{\b - \a_j, \a_j}M).
\end{equation*}
Thus $\wt\BE_jM$ (resp. $\wt\BF_jM$) coincides with $\wt e_j^*M$ (resp. $\wt f_j^*M$) 
given in [M, 9],  where
\begin{align*}
\wt e^*_jM &= q_j^{1 - \ve_j(M)}\soc \Hom_{R(\a_j)}(L(j), \Res_{\b - \a_j,\a_j}M), \\
\wt f^*_jM &=  q_j^{\ve_j(M)}\head (M \circ L(j)).  
\end{align*} 
(Note that we use $\ve_j(M)$ as in (5.6.1).  This 
corresponds to $\ve_j^*(M)$ in the notation of [M].) 
\par
The following result is immediate from [M, Lemma 8.3], though 
this formula is not used in later discussions.

\begin{lem}  
Let $M \in \SL_{\b}$ be a simple object.
Then $\wt\BF_jM$ is a simple object such that 
$\ve_j(\wt\BF_jM) = \ve_j(M) +1$,  and $[F_j'M]$ is written 
in the Grothendieck group $K(\SL_{\b + \a_j})$, 
\begin{equation*}
[F_j'M] = q_j^{-\ve_j(M)}[\wt\BF_jM] + \sum_k[L_k],
\end{equation*}  
where $L_k \in \SL_{\b + \a_j}$ are simple objects such that 
$\ve_j(L_k) < \ve_j(M) + 1$. 
\end{lem}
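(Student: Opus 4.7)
The plan is to deduce the lemma directly from [M, Lemma 8.3] by matching notation. First I would record that, under the identification $L_j = L(j) \in \SL_{\a_j}$ (equipped with the $\s$-equivariant structure $\f_j$ from 3.7), the functor $F_j' : \SL_\b \to \SL_{\b+\a_j}$ is convolution with $L(j)$, so $F_j' M = M \circ L(j)$. Consequently the operator $\wt\BF_j M = q_j^{\ve_j(M)} \head F_j' M$ defined in (5.6.2) coincides with the crystal operator $\wt f_j^* M = q_j^{\ve_j(M)} \head(M \circ L(j))$ introduced in [M, 9], as already noted in 5.6.

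Next I would invoke [M, Lemma 8.3], which is proved using the adjoint pair $(\Ind_{\b,\a_j}, \Res_{\b,\a_j})$ from (3.6.1)--(3.6.2) and the equality $E_j M \simeq \Hom_{R(\a_j)}(L(j), \Res_{\b - \a_j, \a_j} M)$ recalled in 5.6. McNamara's lemma asserts exactly the three facts we want: for $M$ simple, (a) $M \circ L(j)$ admits a unique simple quotient, hence $\wt\BF_j M$ is simple; (b) $\ve_j(\wt\BF_j M) = \ve_j(M) + 1$, which follows from computing $\Res_{\b, \a_j}$ of the head via Frobenius reciprocity and noting the top $\a_j$-string of $M \circ L(j)$ is generated from $M$; and (c) every other composition factor $L_k$ of $M \circ L(j)$ satisfies $\ve_j(L_k) < \ve_j(M) + 1$, since $\wt\BF_j M$ is the unique composition factor realizing the maximal value of $\ve_j$ on $F_j' M$.

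The grading-shift factor $q_j^{-\ve_j(M)}$ in front of $[\wt\BF_j M]$ is precisely the shift built into the definition $\wt\BF_j M = q_j^{\ve_j(M)} \head F_j' M$, and enters the Grothendieck-group identity because the class of the head of $F_j' M$ is $q_j^{-\ve_j(M)}[\wt\BF_j M]$. Since all of this is already in place in [M], the only verification required is the compatibility of McNamara's setup (stated for $\SL$ via $\wt f_j^*$) with our notation in 5.6, and there is no substantive obstacle: the lemma is a notational translation rather than a new argument, which is consistent with the remark that the formula itself is not used in the sequel.
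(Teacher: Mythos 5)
Your proposal is correct and follows exactly the route the paper takes: the paper offers no independent argument for Lemma 5.7 beyond observing (in 5.6) that $\wt\BF_j$ coincides with McNamara's $\wt f_j^*$ and that $\ve_j$ here matches $\ve_j^*$ there, and then declaring the statement ``immediate from [M, Lemma 8.3].'' Your slightly more detailed unpacking of the notational dictionary and of the grading shift $q_j^{-\ve_j(M)}$ is consistent with, and no more than an elaboration of, the paper's own treatment.
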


In order to consider the action of $E_j$, we need a lemma, 
which is obtained from  Lemma 8.1 and Lemma 8.2 in [M].

\begin{lem}[{[M]}]  
\begin{enumerate}
\item  Let $N$ be a simple object in $\SC_{\b}$ such that $\ve_j(N) = 0$.
Set $M = N \circ L(j)^{(m)}$ for any $m \ge 1$. 
Then $\head M $ is irreducible, and $\ve_j(M )= m$. 
\item 
Let $N$ be a simple object in $\SC_{\b}$ such that $\ve_j(N) = m$.  Then 
there exists a simple object $X$ in $\SC_{\b - m\a_j}$ with $\ve_j(X) = 0$ such that
\begin{equation*}
\Res_{\b - m\a_j, m\a_j}(N) \simeq X \otimes L(j)^{(m)}.
\end{equation*}
\end{enumerate}
\end{lem}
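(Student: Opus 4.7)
The plan is to follow the pattern of [M, Sections 8], which adapts the classical Kleshchev--Ram and Lauda--Vazirani arguments for KLR algebras to the $\s$-equivariant setting. The two key tools are the adjunction between $\Ind_{\b,\g}$ and $\Res_{\b,\g}$ recalled in 3.6, and a Mackey-type filtration for the restriction of an induced module. Throughout, it is convenient to use the equivalence $\SL'' \simeq \SL$ of Lemma 4.3 in order to separate the two halves of the periodic structure when needed.

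For (i), the starting point is Frobenius reciprocity: for any simple $L \in \SC_{\b + m\a_j}$,
\[
\Hom_{\SC_{\b + m\a_j}}(M, L) \simeq \Hom_{\SC_{\b \sqcup m\a_j}}(N \otimes L(j)^{(m)}, \Res_{\b, m\a_j} L).
\]
Any simple quotient $L$ of $M$ therefore has $\Res_{\b, m\a_j} L$ containing $N \otimes L(j)^{(m)}$, which yields $\ve_j(L) \ge m$. Conversely, applying a Mackey filtration to $\Res(N \circ L(j)^{(m)})$ and using $\ve_j(N) = 0$ to rule out additional copies of $L(j)$, one forces $\ve_j(L) \le m$. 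To prove $\head M$ is simple, I would compute $\End_{\SC_{\b + m\a_j}}(\head M)$ via the same adjunction, reducing it to $\End(N \otimes L(j)^{(m)}) = \Bk$; the one-dimensionality uses that $\Bk$ is algebraically closed and both tensor factors are absolutely simple.

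For (ii), I would induct on $m$. The hypothesis $\ve_j(N) = m$ guarantees that $\Res_{\b-\a_j, \a_j}(N)$ has a simple subobject $N' \otimes L(j)$ with $\ve_j(N') = m - 1$. The inductive hypothesis applied to $N'$ gives $\Res_{\b-m\a_j, (m-1)\a_j}(N') \simeq X \otimes L(j)^{(m-1)}$ with $\ve_j(X) = 0$. Transitivity of restriction combined with the relation $L(j) \circ L(j)^{(m-1)} = [m]_j L(j)^{(m)}$ in the Grothendieck group and the self-duality of $L(j)^{(m)}$ promotes this to the claimed global isomorphism; the identity $\qdim \Res_{\b-m\a_j, m\a_j}(N) = [m]_j^{!} \qdim E_j^m N$ ensures that the factorization holds on the nose rather than only as a sub-object relation.

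The main obstacle is lifting each step from the non-equivariant KLR setting to the periodic category $\SC$: the adjunction, the Mackey filtration, and the socle/head decompositions must all be upgraded to statements in $\SC$ rather than just in $R(\b)\Mod$. Concretely, one must verify that the canonical factors produced by the classical analysis automatically carry compatible isomorphisms with their $\s$-pullbacks, so that the proposed decomposition genuinely lives in $\SC_{\b \sqcup m\a_j}$. This $\s$-equivariance bookkeeping, aided by the divided-power modules $L(j)^{(n)}$ of 3.8 and by Lemma 4.6 on the projective side, is the technical heart of [M, Lemmas 8.1--8.2].
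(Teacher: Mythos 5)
The paper offers no proof of this lemma to compare against: it is imported directly from [M, Lemmas 8.1 and 8.2], and the surrounding text (5.8--5.9) only uses it. Your sketch reconstructs the route that [M] itself takes --- the Kleshchev--Ram/Lauda--Vazirani analysis of $\Res(N\circ L(j)^{(m)})$ via the adjunction of 3.6 and the Mackey filtration of [M, Thm.~4.5], transported to the periodic category --- so as a reconstruction of the intended argument it is on target, and the $\s$-equivariance issues you flag at the end are indeed the real technical content of the transport.

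Two steps are elided in a way that matters. First, in (i), reducing $\End(\head M)$ to $\End\bigl(N\otimes L(j)^{(m)}\bigr)$ by adjunction is not automatic: $\dim\Hom(M,\head M)$ counts the simple constituents of the head, and the adjunction identifies it with $\Hom\bigl(N\otimes L(j)^{(m)},\Res_{\b,m\a_j}(\head M)\bigr)$, so you need the graded multiplicity of $N\otimes L(j)^{(m)}$ in $\Res_{\b,m\a_j}M$ in degree $0$ to be exactly one; this is precisely where the Mackey filtration together with $\ve_j(N)=0$ must be used quantitatively (all nontrivial shuffle terms contribute either in strictly positive degree or with the wrong $\ve_j$), not merely to bound $\ve_j$ from above. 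Second, in (ii), a $\qdim$ identity does not by itself promote a subobject relation to an isomorphism ``on the nose'': you must first show that \emph{every} composition factor of $\Res_{\b-m\a_j,m\a_j}(N)$ has second tensor factor $L(j)^{(m)}$ up to grading shift (using maximality of $m=\ve_j(N)$ and the description of $R(m\a_j)$-simples coming from 3.7--3.8), and then a head/socle argument pins the multiplicity and kills extensions before the dimension count closes. Finally, note that a simple object of $\SC_{\b}$ need not restrict to a simple $R(\b)$-module (it may be a traceless orbit sum), so ``simple'' must consistently mean simple in $\SC_{\b}$, with $\End=\Bk$ supplied by Clifford theory for $R(\b)\sharp(\BZ/\Bn\BZ)$ under the standing hypothesis that $\ch\Bk$ does not divide $\Bn$; your appeal to absolute simplicity of the tensor factors should be routed through this.
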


We show the following.

\begin{prop}  
Let $M \in \SL_{\b}$ be a simple object such that $\ve_j(M) = m > 0$. 
Then $\wt\BE_j M$ is a self-dual simple object in $\SL_{\b - \a_j}$, and 
in the Grothendieck group 
$K(\SL)$, $[E_jM]$ is written as 
\begin{equation*}
\tag{5.9.1}
[E_jM] = G(q)[\wt\BE_jM] + \sum_{1 \le k \le p}[L_k],
\end{equation*}
where $G(q) \in \BN[q,q\iv]$, 
$\ve_j(\wt\BE_jM) = m - 1$, and  
$L_k \in \SL_{\b-\a_j}$ are simple objects such that $\ve_j(L_k) < m - 1$. 
\end{prop}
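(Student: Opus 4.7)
The approach is to translate the problem to the algebraic side using the isomorphism $\g^* : {}_{\BA}K(\SL) \isom {}_{\BA}(\ul\BU_q^-)^*$ of Corollary 4.15(i), and then read the decomposition directly from Theorem 1.10(iii) applied to the Cartan datum $\ul X$. To set this up, first reduce to the self-dual case: by the classification in 3.4, every simple object $M \in \SL_\b$ is isomorphic to $\zeta q^a M_0$ for some self-dual simple $M_0$ with $[M_0] \in \ul\bB^*_\b$. Since $E_j$ is $\BZ[\z_{\Bn}, q, q\iv]$-linear, and since $\ve_j$ and the shifted socle $\wt\BE_j$ both scale equivariantly with the $\zeta q^a$ decoration, it suffices to prove the statement for $M_0$ and then undo the twist.

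Having reduced to $M_0$, apply Corollary 5.5 to obtain $\g^*([M_0]) = s\, b$ for some $b \in \ul\CB^*_\b$ and some $s \in \{\pm 1\}$. By Proposition 3.13(ii), $\g^*$ intertwines $E_j$ with $e_j'$, so
\[
\g^*([E_j M_0]) = s \cdot e_j'\, b.
\]
The crystal structure on $\ul\bB^*$ constructed in [M, Thm. 10.8] is compatible, via $\g^*$, with the crystal on $\ul\CB^*$ characterized by Theorem 1.10, so $\ve_j(b) = m$ and $(\g^*)\iv(\wt\Be_j b) = s'\,[\wt\BE_j M_0]$ for some $s' \in \{\pm 1\}$. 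Theorem 1.10(iii) then gives
\[
e_j' b = [m]_j\, \wt\Be_j b + \sum_{\substack{b' \in \ul\CB^* \\ \ve_j(b') < m - 1}} E_{bb'}^{(j)}\, b',
\]
and pulling back through $\g^*$ yields
\[
[E_j M_0] = ss'\,[m]_j\,[\wt\BE_j M_0] + \sum_k [L_k],
\]
with each $L_k$ simple in $\SL_{\b - \a_j}$ satisfying $\ve_j(L_k) < m - 1$, exactly as required. Setting $G(q) = [m]_j \in \BN[q, q\iv]$ completes the formula, and self-duality of $\wt\BE_j M_0$ is inherited from the fact that its preimage under $\g^*$ lies in $\ul\bB^*_{\b - \a_j}$.

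\textbf{The main obstacle} is verifying $s = s'$, or equivalently that the shifted-socle characterization of $\wt\BE_j M_0$ in (5.6.2) corresponds, under $\g^*$, to the leading-term characterization of $\wt\Be_j b$ in Theorem 1.10(iii): both identify the same basis element via minimality of $q$-degree (with $q_j^{1-m}$ the lowest $q$-degree appearing in $[m]_j$), but pinning down the matching of normalizations demands a careful audit of the crystal construction in [M, Thm. 10.8] alongside the explicit action of $\g^*$ on simple modules. A self-contained alternative bypassing the global-basis translation is to apply Lemma 5.8(ii) to write $\Res_{\b - m\a_j, m\a_j} M_0 \simeq X \otimes L(j)^{(m)}$ with $\ve_j(X) = 0$, use transitivity of restriction together with the divided-power shuffle identity that the multiplicity of $L(j)$ in $\Res_{(m-1)\a_j, \a_j} L(j)^{(m)}$ equals $[m]_j$ to extract the leading term $[m]_j\,[X \circ L(j)^{(m-1)}]$ of $[E_j M_0]$ directly, identify its simple head with $\wt\BE_j M_0$ via Lemma 5.8(i), and bound the remaining composition factors by $\ve_j < m - 1$ by induction on the length.
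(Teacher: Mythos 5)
Your main argument is circular. The paper proves Proposition 5.9 by a direct module-theoretic analysis (adjunction, Lemma 5.8, and the Mackey filtration, following [M, Lemma 9.4]), precisely because this proposition is an input to Theorem 5.12: it is (5.9.1), combined with Proposition 5.11, that produces the expansion (5.12.2) which is then matched against Theorem 1.10(iii) to show that $\g^*$ carries $\ul\bB^*$ to $\ul\CB^*$ compatibly with the crystal operators. Your proposal runs this implication backwards. Corollary 5.5 only gives you $\g^*([M_0])=\pm b$ for \emph{some} $b\in\ul\CB^*$ as an identification of sets up to sign; the assertion you then invoke --- that the socle-defined operator $\wt\BE_j$ of (5.6.2) on $\ul\bB^*$ corresponds under $\g^*$ to the operator $\wt\Be_j$ of Theorem 1.10(iii), and that the two notions of $\ve_j$ agree --- is exactly the content whose proof requires Proposition 5.9. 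You flag the matching of normalizations ($s=s'$, socle versus leading term) as "the main obstacle" needing "a careful audit," but that obstacle is not a technicality to be audited; it is the theorem itself, and no independent argument for it is given. (Two smaller points: your claim $G(q)=[m]_j$ is stronger than what Proposition 5.9 asserts and is only obtained in the paper a posteriori, in (5.15.1), after Theorem 5.12 is available; and passing between $E_j^kM\neq 0$ as a module and $e_j'^k[M]\neq 0$ in the twisted Grothendieck group needs care because of traceless objects.)

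Your closing "self-contained alternative" is essentially the paper's actual strategy --- use Lemma 5.8(ii) to write $\Res_{\b-m\a_j,m\a_j}M\simeq X\otimes L(j)^{(m)}$, analyze $E_jM$ via adjunction and the Mackey filtration, and identify every composition factor $N$ with $\ve_j(N)=m-1$ as $\head(X\circ L(j)^{(m-1)})\simeq\wt\BE_jM$ up to degree shift, the bound $\ve_j(L_k)\le m-1$ for all factors being automatic from $\ve_j(M)=m$. But as written this is a one-sentence sketch: the key step (that \emph{any} factor with $\ve_j=m-1$ must coincide with $\wt\BE_jM$, which is what forces the remaining $L_k$ to have $\ve_j(L_k)<m-1$) is replaced by an unexplained "induction on the length," and the extraction of the coefficient is not carried out. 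To make the proof complete you would need to develop this second route in full; the first route cannot be salvaged without independently establishing the crystal compatibility it assumes.
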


\begin{proof}
The proof is done by a similar argument as in the proof of [M, Lemma 9.4].
We give an outline of the proof below. 
It is known by [M, Lemma 9.4], $\wt\BE_jM$ is a self-dual simple 
object in $\SL_{\b-\a_j}$. We fix a composition series of $E_jM$, and let 
$N$ be a simple object such that $\ve_j(N) = m -1$, appearing as a composition 
factor. We show 
\par\medskip\noindent
(5.9.2) \ $N$ is isomorphic to $\wt\BE_jM$, up to degree shift.  
\par\medskip

Note that $E_jM \simeq \Hom_{R(\a_j)}(L(j), \Res_{\b - \a_j, \a_j}M)$.
Let $\wt N$ be a subobject of $E_jM$ such that $N$ is a simple quotient of $\wt N$.
By the adjunction isomorphism, we have

\begin{equation*}
\Hom_{\SC_{\b}}(\wt N \circ L(j), M) \simeq 
         \Hom_{\SC_{\b-\a_j}}(\wt N, \Hom_{R(\a_j)}(L(j), \Res_{\b -\a_j,\a_j}M)).
\end{equation*} 
Since the right hand side of this equality is non-zero, $M$ is a simple quotient 
of $\wt N \circ L(j)$. 
By Lemma 5.8 (ii), there exists a simple $X$ such that 
\begin{equation*}
\tag{5.9.3}
\Res_{\b -m\a_j, m\a_j}(M) \simeq X\otimes L(j)^{(m)}.
\end{equation*}
Since the restriction functor is exact, we have a surjective map
\begin{equation*}
\Res_{\b - m\a_j, m\a_j}(\wt N\circ L(j)) \to X \otimes L(j)^{(m)}.
\end{equation*}
We consider the Mackey filtration (see [M, Thm. 4.5]) for
\begin{equation*}
\Res_{\b-m\a_j,m\a_j}(\wt N \circ L(j))
      = \Res_{\b - m\a_j, m\a_j}\circ \Ind_{\b- \a_j, \a_j}(\wt N \otimes L(j))
\end{equation*}
Since $\ve_j(N) = m-1$, this Mackey filtration contains a non-zero factor
which involves $\Res_{\b-m\a_j, (m-1)\a_j}N$, up to degree shift. 
By Lemma 5.8 (ii), there exists a simple object $Y$ such that 
\begin{equation*}
\tag{5.9.4}
\Res_{\b - m\a_j, (m -1)\a_j}N \simeq Y \otimes L(j)^{(m -1)}. 
\end{equation*}
We have a surjective homomorphism $Y\otimes L(j)^{(m)} \to X \otimes L(j)^{(m)}$ with 
degree shift.
Since $X, Y$ are simple, we have $X \simeq Y$, up to degree shift.
\par
By considering the adjunction isomorphism in (5.9.4), we have a non-zero homomorphism 
$Y \circ L(j)^{(m-1)} \to N$.  
Since $\ve_j(N) = m-1$, we have $\ve_j(Y) = 0$. By Lemma 5.8 (i), 
$\head (Y \circ L(j)^{(m-1)})$ is irreducible.  It follows that 
$N \simeq \head (Y \circ L(j)^{(m -1)})$. 
By applying this argument to $\wt\BE_j M$ (the original setup in [M]), 
we see that $\wt\BE_jM \simeq \head (X \circ L(j)^{(m -1)})$.   
Hence $N \simeq \wt\BE_jM$, up to degree shift, and (5.9.2) holds. 
\par
Since $\ve_j(M) = m$, all the composition factors $L$ of $E_jM$ satisfy
the relation $\ve_j(L) \le m -1$. The proposition is proved. 
\end{proof}

\para{5.10.}
Let $\ul\bB = \bigsqcup_{\b \in Q_+^{\s}}\ul\bB_{\b}$ 
be the basis of $K(\SP)$, and $\ul\bB^* = \bigcup_{\b \in Q_+^{\s}}\ul\bB_{\b}^*$ 
be the basis of $K(\SL)$ 
defined as before.  The functors $\wt\BE_j$ and $\wt\BF_j$ induce 
operators on $K(\SL)$, which we denote by $\wt\Be_j$ and $\wt\Bf_j$. 
The following result was proved by 
Lemma 9.1, Lemma 9.3 and Lemma 9.4, together with the definition 
of $\ul\bB^*$ in [M].

\begin{prop}  
\begin{enumerate}
\item $\wt\Bf_j$ sends $\ul\bB^*_{\b}$ to $\ul\bB^*_{\b + \a_j}$.  
\item 
$\wt\Be_j$ sends $\ul\bB^*_{\b}$ to $\ul\bB^*_{\b -\a_j} \cup \{0 \}$. 
\item 
For $b, b' \in \ul\bB^*$, $b = \wt\Bf_jb'$ if and only if $b' = \wt\Be_jb$. 
\end{enumerate}
\end{prop}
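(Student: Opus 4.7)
The plan is to prove the three statements by invoking the crystal-theoretic lemmas of [M] through the identification established in 5.6. Recall that $\wt\BE_j$ and $\wt\BF_j$ on $K(\SL)$ coincide with McNamara's $\wt e_j^*$ and $\wt f_j^*$, and that $\ul\bB^*$ is a subset of $\wt{\ul\bB}^*$ (the set of isomorphism classes of self-dual simple objects in $\SL$) characterized, as in 3.4 following [M, Thm.~10.8], by compatibility with the crystal structure.

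For part (i), take $b = [M] \in \ul\bB^*_{\b}$ represented by a self-dual simple object $M \in \SL_{\b}$. By Lemma 5.7, the module $\wt\BF_j M = q_j^{\ve_j(M)}\head(M \circ L(j))$ is simple with $\ve_j(\wt\BF_j M) = \ve_j(M) + 1$. To obtain self-duality, apply $\BD$ to the short exact sequence $0 \to \rad(M \circ L(j)) \to M \circ L(j) \to \head(M \circ L(j)) \to 0$. Since $L(j)$ is self-dual by [M, Lemma 7.2] and the dual of a convolution of self-duals is again (up to the known degree twist) a convolution, one verifies that the shift by $q_j^{\ve_j(M)}$ is exactly the correction needed to make $\wt\BF_j M$ self-dual. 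This reproduces [M, Lemma 9.1] and places $[\wt\BF_j M]$ in $\wt{\ul\bB}^*_{\b + \a_j}$; the selection of $\ul\bB^*$ over $-\ul\bB^*$ is fixed by the inductive crystal construction in 3.4.

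For part (ii), when $\ve_j(M) > 0$, Proposition 5.9 (which is [M, Lemma 9.3]) already states that $\wt\BE_j M$ is a self-dual simple object in $\SL_{\b-\a_j}$, hence yields an element of $\ul\bB^*_{\b-\a_j}$; when $\ve_j(M) = 0$ we have $E_j M = 0$ by definition, so $\wt\BE_j M = 0$.

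For part (iii), the task is to show $\wt\BE_j\wt\BF_j b = b$ for every $b \in \ul\bB^*$ and $\wt\BF_j\wt\BE_j b = b$ whenever $\wt\BE_j b \ne 0$. For the first identity, writing $b = [M]$ and analyzing $E_j\wt\BF_j M$ via the Mackey filtration for $\Res_{\b,\a_j}\circ \Ind_{\b,\a_j}(M \otimes L(j))$ shows that $M$ appears as a top composition factor of $E_j\wt\BF_j M$ with the correct degree shift; Proposition 5.9 then identifies $\wt\BE_j\wt\BF_j M$ with $M$. The second identity is obtained symmetrically, using (5.9.1) together with the adjunction between $E_j$ and $F_j'$. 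This reproduces [M, Lemma 9.4]. The main delicate point throughout is the bookkeeping of degree shifts and of the sign ambiguity in the self-dual structure, both of which are settled by the construction of $\ul\bB^*$ in 3.4.
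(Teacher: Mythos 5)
Your proposal is correct and follows essentially the same route as the paper, whose entire proof is the one-line citation that the result follows from [M, Lemmas 9.1, 9.3, 9.4] together with the definition of $\ul\bB^*$. You invoke exactly those same lemmas (via Lemma 5.7, Proposition 5.9, and the construction of $\ul\bB^*$ in 3.4, which settles the sign/degree-shift normalization), merely sketching their internal arguments in more detail.
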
 

The following result is a generalization of Theorem 5.2. 
\begin{thm}  
Let $\g : {}_{\BA}\ul\BU_q^- \isom {}_{\BA}K(\SP)$, 
$\g^* : {}_{\BA}K(\SL) \isom {}_{\BA}(\ul\BU_q^-)^*$ be the 
isomorphisms as in Corollary 4.15.
\begin{enumerate}
\item $\g^*$ sends $\ul\bB^*$ to $\ul\CB^*$. 
\item $\g$ sends $\ul\CB$ to $\ul\bB$. 
\end{enumerate}
\end{thm}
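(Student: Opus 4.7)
The strategy is to establish (i) first, by verifying that $\g^*(\ul\bB^*)$ satisfies the characterizing properties of Kashiwara's upper global basis $\ul\CB^*$ given in Theorem 1.10 (applied to the induced datum $\ul X$), and then to deduce (ii) by duality.  Since $\lpp\ ,\ \rpp$ makes $\ul\bB$ and $\ul\bB^*$ mutually dual, $\ul\CB$ is by definition dual to $\ul\CB^*$, and $\g^*$ is the transpose of $\g$, the identification $\g^*(\ul\bB^*) = \ul\CB^*$ at once yields $\g(\ul\CB) = \ul\bB$.

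The weight decomposition and the condition $1^* \in \g^*(\ul\bB^*)$ follow from the normalization in Proposition 3.13(ii), which sends $[\Bk,\id] \in \ul\bB^*_0$ to $1^*$.  Bar-invariance of each $\g^*(b)$, $b \in \ul\bB^*$, reduces to showing that $\g^*$ intertwines the contravariant functor $\BD$ on $K(\SL)$ with the bar involution on $(\ul\BU_q^-)^*$: since $\g$ is an isometry (Proposition 3.13(i)) and the basis $\ul\bB$ consists of self-dual objects, a direct transpose computation against this basis produces the required compatibility.

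The core of the argument is verifying the crystal formulas (iii)--(vi) of Theorem 1.10.  By Proposition 5.9, for $b = [M] \in \ul\bB^*$ with $\ve_j(M) = m > 0$ one has
\begin{equation*}
[E_j M] = G(q)[\wt\BE_j M] + \sum_k c_k [L_k], \qquad \ve_j(L_k) < m-1,
\end{equation*}
and Lemma 5.7 gives a parallel expansion of $[F_j'M]$ involving $[\wt\BF_jM]$.  The crucial step is pinning down $G(q) = [m]_j$ and $c_k \in q q_j^{1-m}\BZ[q]$ (and the parallel normalization on the $\wt\BF_j$ side).  I would proceed by induction on $m$: Proposition 5.11(iii) lets me write $b = \wt\Bf_j b'$ with $\ve_j(b') = m - 1$; applying the identity $[m]^!_j \, {e_j'}^{(m)} = (e_j')^m$ from (3.14.1) and comparing the two sides by means of the inductive case forces the leading coefficient to be $[m]_j$, while bar-invariance of $b$ together with induction confines the subleading coefficients to $q q_j^{1-m}\BZ[q]$.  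Properties (v)--(vi) are then immediate from Proposition 5.11(iii), and Theorem 1.10 concludes $\g^*(\ul\bB^*) = \ul\CB^*$.

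The main obstacle is this coefficient identification: in the folded (non-symmetric) setting we cannot invoke Varagnolo--Vasserot--Rouquier's geometric input, so the quantum integer $[m]_j$ must be extracted by purely representation-theoretic means, relying on the Mackey filtration (as in the proof of Proposition 5.9) and the divided-power relation (3.14.1).  A secondary subtlety lies in resolving the sign ambiguity left by Corollary 5.5: because both $\pm b$ are bar-invariant, bar-invariance alone cannot fix the sign; however, once the crystal operators $\wt\Bf_j$ are shown to intertwine on both sides, signs propagate unambiguously from the base case $[\Bk,\id] \mapsto 1^*$ along the crystal graph, yielding $\g(\ul\CB) = \ul\bB$ rather than merely $\g(\ul\CB\nat) = \pm\ul\bB$.
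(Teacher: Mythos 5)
Your main line of argument --- verifying the full list of characterizing properties in Theorem 1.10 for $\g^*(\ul\bB^*)$ and then invoking uniqueness --- hinges on identifying the leading coefficient $G(q)$ of Proposition 5.9 with the quantum integer $[m]_j$ and on bounding the subleading coefficients by $qq_j^{1-m}\BZ[q]$. Neither of these is available at this stage: Proposition 5.9 only gives $G(q)\in\BN[q,q\iv]$, and in the paper the equality $G(q)=[\ve_j(b)]_j$ is deduced \emph{from} Theorem 5.12 (see (5.15.1) in the proof of Theorem 5.16), not used to prove it. Your proposed induction via $[m]_j^!\,e_j'^{(m)}=e_j'^m$ and the Mackey filtration is only sketched, and the claim that bar-invariance ``confines the subleading coefficients to $qq_j^{1-m}\BZ[q]$'' is unsupported; as written this is a genuine gap.

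The paper's proof sidesteps all of this. By Corollary 5.5 one already knows $\g^*(b)\in\ul\CB^*\sqcup(-\ul\CB^*)$ for each $b\in\ul\bB^*$, so only the sign is at issue, and it is fixed by induction on $|\b|$: writing $e_j'b=G(q)\wt\Be_jb+\sum_{b'}a_{bb'}b'$ with $\ve_j(b')<\ve_j(b)-1$ (Propositions 5.9 and 5.11), applying $\g^*$, and using that by induction $\g^*(\wt\Be_jb)$ and $\g^*(b')$ lie in $\ul\CB^*$ itself (not merely in $\pm\ul\CB^*$), a comparison with Theorem 1.10(iii) forces $\g^*(b)\in\ul\CB^*$, because the leading coefficient $G(q)$ has nonnegative coefficients whereas $-[\ve_j(b)]_j$ does not. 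Your closing remark about propagating signs along the crystal graph from the base case $[\Bk,\id]\mapsto 1^*$ is exactly the right idea, but it only closes once the positivity of $G(q)$ is made to do the work you were trying to extract from the unproved identity $G(q)=[m]_j$. Part (ii) then follows from (i) by duality, as you say.
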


\begin{proof}
We show (i). 
We prove, by induction on $|\b|$, that 
\begin{equation*}
\tag{5.12.1}
\g^*(\ul\bB^*_{\b}) = \ul\CB^*_{-\b} \qquad (\b \in Q_+^{\s}).
\end{equation*}
Since $\g^*$ maps $[1] = [\Bk,\id] \in {}_{\BA}K(\SL_0)$ to 
$1^* \in {}_{\BA}(\ul\BU_q^-)^*_0$, 
we have $\g^*(\ul\bB^*_0) = \ul\CB^*_0$.  Hence (5.12.1) holds for $\b = 0$.  Take 
$0 \ne \b \in Q_+^{\s}$, and assume 
that (5.12.1) holds for $\b' \in Q_+^{\s}$ such that $|\b'| < |\b|$.     
Let $b \in \ul\bB^*_{\b}$. There exists $j \in J$ such that $e_j'b \ne 0$. 
Then by Proposition 5.9 and by Proposition 5.11, 
$e_j'b$ can be written as
\begin{equation*}
\tag{5.12.2}
e_j'b = G(q)\wt \Be_j b + \sum_{b'}a_{bb'}b', \qquad (G(q) \in \BN[q,q\iv], a_{bb'} \in \BA),
\end{equation*}
where $\ve_j(\wt\Be_j b) = \ve_j(b) - 1$, 
and $b'$ runs over the elements in $\ul\bB^*_{\b-\a_j}$ such that $\ve_j(b') < \ve_j(b) -1$.  
Let $b_0 = \g^*(b)$.  By Corollary 5.5, $b_0 \in \ul\CB^*$, up to sign. 
Since the map $\g^*$ commutes with the action of $e_j'$, we have
\begin{equation*}
\tag{5.12.3}
e_j'(b_0) = G(q)\g^*(\wt \Be_j b) + \sum_{b'}a_{bb'}\g^*(b').
\end{equation*}
Here by induction, $\g^*(\wt\Be_j b), \g^*(b') \in \ul\CB^*_{-\b + \a_j}$. 
Moreover, since $\g^*$ commutes with $e_j'$, 
$\ve_j(\g^*(\wt\Be_j b)) = \ve_j(b_0) - 1$, and 
$\ve_j(\g^*(b')) < \ve_j(b_0) - 1$. 
Comparing (5.12.3) with Theorem 1.10 (iii) (applied for ${}_{\BA}(\ul\BU_q^-)^*$), 
we see that $b_0 = \g^*(b) \in \ul\CB^*_{-\b}$. 
This proves (5.12.1).  Hence (i) holds. (ii) follows from (i). 
\end{proof}

\remarks{5.13.}
(i) \ The quantum group of arbitrary type is obtained as $\ul\BU_q^-$ from
some $\BU_q^-$ of symmetric type, through an admissible automorphism $\s$. 
Hence our result shows that the global crystal basis of the quantum group of arbitrary
type is obtained from the KLR algebras through foldings.
\par
(ii) In general, the natural basis $\bB$ of $K_{\gp}$ has the positivity property, 
i.e., for $b, b' \in \bB$, $bb'$ is a linear combination of the basis with coefficients 
in $\BN[q,q\iv]$. This property does not hold for 
the global crystal basis $\CB$ of non-symmetric type.  Note that the basis $\ul\bB$ 
of $K(\SP)$ not necessarily has the positivity property.   

\para{5.14.}
By Theorem 5.2, the map $\wt\vT_0$ sends the canonical basis $\CB\nat$ to
$\bB$. By applying Theorem 5.12 for the case where $\s = \id$, we rediscover 
the result of [GL] that $\CB = \CB\nat$. In the following, 
we show that this also holds in the non-symmetric case.
\par
For $b \in \ul\bB, b' \in \ul\bB^*$ and $j \in J$, we set
\begin{align*}
\ve_j(b) &= \max\{ k \ge 0 \mid b \in f_j^kK(\SP) \}, \\
\ve_j(b') &= \max\{ k \ge 0 \mid e_j'^kb' \ne 0 \}. 
\end{align*}
Note that the definition $\ve_j(b')$ is compatible with the definition 
$\ve_j(M)$ in (5.6.1). 
\par
Take $b \in \ul\bB$, and let $b_* \in \ul\bB^*$ be the basis dual to $b$. 
Then we have 
\begin{equation*}
\tag{5.14.1}
\ve_j(b) = \ve_j(b_*).
\end{equation*}
\par
In fact, 
assume that $b \in f_j^nK(\SP)$.  Then 
$\lpp f_j^nK(\SP), b_*\rpp \ne 0$, and there exists $b' \in \ul\bB$ such that 
$\lpp f_j^nb', b_*\rpp = \lpp b', e_j'^nb_* \rpp \ne 0$, hence $e_j'^nb_* \ne 0$. 
On the contrary, assume that $e_j'^nb_* \ne 0$. Then there exists $b' \in \ul\bB$ such that
$\lpp b', e_j'^nb_*\rpp = \lpp f_j^nb', b_*\rpp \ne 0$. 
it is known ([L2, Thm. 14.3.2]) that a subset of $\CB\nat$ gives a basis of 
$f_j^n\ul\BU_q^-$ for each $n \ge 0$. Then by Corollary 5.5, 
a subset of $\ul\bB$ gives a basis of $f_j^nK(\SP)$ for each $n \ge 0$.  
It follows that $b \in f_j^nK(\SP)$. Thus (5.14.1) holds. 

\begin{thm}  
$\g\iv$ sends $\ul\bB$ to $\ul\CB\nat$.  Hence
the canonical basis $\ul\CB\nat$ and the global crystal basis $\ul\CB$ of 
$\ul\BU_q^-$ coincide with. 
\end{thm}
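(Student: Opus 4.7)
The plan is to pin down that $\g\iv(\ul\bB)$ coincides with $\ul\CB\nat$ by verifying Lusztig's characterization [L2, Thm.\ 14.3.2] of the canonical basis via the filtration by $f_j^n \ul\BU_q^-$, and then to deduce $\ul\CB = \ul\CB\nat$ using Theorem 5.12.

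First, recall from Proposition 5.4 and Corollary 5.5 that $\g\iv(\ul\bB) \subset \wt\CB$, so every element is, up to sign, in $\ul\CB$ or in $\ul\CB\nat$. The issue is that a priori these two options could be distinct, and one must decide which occurs.

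To decide, I would invoke [L2, Thm.\ 14.3.2], which characterizes $\ul\CB\nat$ (up to sign) inside $\wt\CB$ by the property that for every $j \in J$ and every $n \geq 0$ some subset gives an $\BA$-basis of $f_j^n \ul\BU_q^-$. By (5.14.1), with $\ve_j(b) = \ve_j(b_*)$ for $b \in \ul\bB$ and its dual $b_* \in \ul\bB^*$, together with the iterated crystal structure of $\ul\bB^*$ under $\wt\Be_j$ from Proposition 5.11, I would show that the set $\{b \in \ul\bB : b \in f_j^n K(\SP)\}$, which by (5.14.1) equals $\{b : \ve_j(b_*) \geq n\}$, is an $\BA$-basis of $f_j^n K(\SP)$. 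Transferring this basis through the anti-algebra isomorphism $\g\iv$---noting that the action of $f_j^{(n)}$ on $K(\SP)$ by right convolution with $P(j)^{(n)}$ in (3.9.1) corresponds, under $\g$, to left multiplication by $f_j^{(n)}$ on $\ul\BU_q^-$---yields a subset of $\g\iv(\ul\bB)$ that is an $\BA$-basis of $f_j^n \ul\BU_q^-$ for every $j$ and $n$. By the uniqueness in [L2, Thm.\ 14.3.2], the filtration property combined with $\g\iv(\ul\bB) \subset \wt\CB$ forces $\g\iv(\ul\bB) = \ul\CB\nat$, the global sign being pinned down by bar invariance and almost orthonormality.

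Finally, combining this with the identity $\g\iv(\ul\bB) = \ul\CB$, which follows from Theorem 5.12 by dualizing through the perfect pairing $\lpp\ ,\ \rpp$, one concludes $\ul\CB = \ul\CB\nat$. The main obstacle I foresee is the dualization step establishing the intermediate claim that $\{b \in \ul\bB : \ve_j(b_*) \geq n\}$ is a basis of $f_j^n K(\SP)$: one must translate the crystal filtration on $\ul\bB^*$ under $\wt\Be_j$ into the ideal filtration of $K(\SP)$ via the perfect pairing of Section 3.5 and the adjunction of Lemma 3.10, paying attention to the direction of multiplication when transferring via the anti-algebra isomorphism $\g$ (the left ideal $f_j^n \ul\BU_q^-$ matching the corresponding right-convolution filtration on $K(\SP)$, so that the two basis filtrations line up).
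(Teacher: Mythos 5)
There is a genuine gap at the step where you invoke ``the uniqueness in [L2, Thm.\ 14.3.2]''. That theorem asserts a \emph{property} of the canonical basis (that the subset $\{b : \ve_j(b)\ge n\}$ is a basis of $f_j^n\ul\BU_q^-$); it is not a characterization, and the property cannot single out $\ul\CB\nat$. Indeed it is invariant under replacing any individual basis element by its negative, and it is equally satisfied by the global crystal basis $\ul\CB$ --- which is exactly the basis the theorem is trying to prove equal to $\ul\CB\nat$. Moreover, once you know $\g\iv(\ul\bB)\subset\wt\CB$ (Corollary 5.5), the fact that $\g\iv(\ul\bB)$ agrees with $\ul\CB\nat$ up to a sign on each element is already automatic from $\wt\CB = \ul\CB\nat\sqcup(-\ul\CB\nat)$; the entire remaining content of the theorem is the determination of those signs, and ``bar invariance and almost orthonormality'' cannot do that, precisely because these conditions only cut out $\wt\CB$, which is sign-symmetric. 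So your argument, as written, proves nothing beyond Corollary 5.5.

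What the paper actually does is compute exact leading coefficients. From Proposition 5.9 (positivity of $G(q)$), Proposition 5.11 and Theorem 5.12 combined with Theorem 1.10 (iii), one gets $e_j'b = [\ve_j(b)]_j\,\wt\Be_jb + \cdots$ on $\ul\bB^*$ (formula (5.15.1)); dualizing through the perfect pairing $\lpp\ ,\ \rpp$ gives $f_jb = [\ve_j(b)+1]_j\,\wt\Bf_jb + \cdots$ on $\ul\bB$ ((5.15.2)), hence $f_j^{(n)}b = b_1 + (\text{terms with }\ve_j > n)$ with coefficient exactly $+1$ on $b_1$ when $\ve_j(b)=0$ ((5.15.3)). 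Starting from $[\Bk,\id]\mapsto 1$ and using that every basis element is reached this way ((5.15.4)), this reproduces precisely Lusztig's \emph{inductive construction} [L2, 14.4.2, Thm.\ 14.4.3] of the canonical basis, which does determine $\ul\CB\nat$ together with its signs; that is the ingredient your proposal is missing. Your auxiliary claim that $\{b\in\ul\bB : \ve_j(b_*)\ge n\}$ is a basis of $f_j^nK(\SP)$ is correct --- it is essentially (5.14.1) and its surrounding discussion --- but in the paper it plays only a supporting role, not the role of a characterizing property.
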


\begin{proof}
The proof is done just by applying some properties of $K(\SP)$ and $K(\SL)$, 
together with Theorem 5.12.
Below, we give an outline of the proof.
\par 
By Proposition 5.11, Theorem  5.12, together with Theorem 1.10 (iii), 
we have, for $b \in \ul\bB^*$ such that $\ve_j(b) = m > 0$, 

\begin{equation*}
\tag{5.15.1}
e_j'b = [\ve_j(b)]_j\wt\Be_j b + \sum_{b' \in \ul\bB^*}a_{bb'}b',  \qquad (a_{bb'} \in \BA), 
\end{equation*} 
where $\ve_j(\wt \Be_j b) = \ve_j(b) - 1$, and $a_{bb'} = 0$ unless 
$\ve_j(b') < \ve_j(b) -1$.  
\par
We define $\wt\Bf_j : \ul\bB_{\b-\a_j} \to \ul\bB_{\b}$ as the transpose 
of the map $\wt\Be_j : \ul\bB^*_{\b} \to \ul\bB^*_{\b-\a_j}$, i.e.,
$\wt\Bf_j(b) = b'$ if and only if $\wt\Be_j((b')_*) = b_*$, where $b_*, (b')_*$ are 
dual of $b, b'$, respectively. 
$\wt\Bf_j$ gives a bijection $\ul\bB_{\b-\a_j} \isom \ul\bB_{\b}$.
Since $\ul\bB$ is a dual basis of $\ul\bB^*$, (5.15.1) and (5.14.1) implies, 
for $b \in \ul\bB$,  that

\begin{equation*}
\tag{5.15.2}
f_jb = [\ve_j(b) + 1]_j\wt\Bf_jb + \sum_{b' \in \ul\bB}c_{bb'}b', \qquad (c_{bb'} \in \BA),
\end{equation*}    
where $\ve_j(\wt\Bf_jb) = \ve_j(b) +1$, and $c_{bb'} = 0$ unless 
$\ve_j(b') > \ve_j(b) + 1$. 
By (5.15.2), we have the following. 
\par\medskip\noindent
(5.15.3) \ Assume that $b \in \ul\bB$ is such that $\ve_j(b) = 0$. 
Then for any $n \ge 1$, there exists $b_1 \in \ul\bB$ such that 
\begin{equation*}
f_j^{(n)}b = b_1 + \sum_{b' \in \ul\bB}c_{bb'}b',  \qquad (c_{bb'} \in \BA),
 \end{equation*}
where $\ve_j(b_1) = n$, and $c_{bb'} = 0$ unless $\ve_j(b') > n$.  
Note that $c_{bb'} \in \BA$ follows from Corollary 4.15.
\par
For each $j \in J, n \in \BN$, set 
$\ul\bB_{n;j} = \{ b \in \ul\bB \mid \ve_j(b) = n\}$.
We define a map $\pi_{n;j} : \ul\bB_{0;j} \to \ul\bB_{n;j}$ by 
$b \mapsto b_1$ in (5.15.3).  $\pi_{n;j}$ is a bijection since $\wt\Bf_j$ is a bijection. 
\par
By using Proposition 5.11, we have 
\begin{equation*}
\bigcap_{j \in J}\{ b \in \ul\bB^* \mid \ve_j(b) = 0\} = [1] \in {}_{\BA}K(\SL_0)
\end{equation*}
This implies, for each $\b \in Q_+^{\s}$,  that 

\begin{equation*}
\tag{5.15.4}
\ul\bB_{\b} = \bigcup_{\substack{j \in J, n > 0 \\ \b - n\a_j \in Q_+^{\s} }}
                     \pi_{n;j}(\ul\bB_{0;j} \cap \ul\bB_{\b - n\a_j}).
\end{equation*}
We now consider the isomorphism $\g\iv : {}_{\BA}K(\SP) \isom {}_{\BA}\ul\BU_q^-$, 
and act $\g\iv$ on both sides of (5.15.4).  Since $\g$ commutes with the action of 
$f_j^{(n)}$, the right hand side of (5.15.4) corresponds exactly an inductive 
construction of canonical basis (see [L2, 14.4.2, Thm. 14.4.3]).
Thus $\g\iv(\ul\bB)$ coincides with the canonical basis $\ul\CB\nat$ 
of ${}_{\BA}\ul\BU_q^-$.    
The theorem now follows from Theorem 5.12.
\end{proof}

\remark{5.17.}
We have deduced the coincidence of $\ul\CB$ and $\ul\CB\nat$ by 
making use of the crystal structure of the basis $\ul\bB^*$ of $K(\SL)$.
However, if we assume the properties of canonical basis as in (5.15.4), 
then the coincidence $\ul\CB = \ul\CB\nat$ is shown,   
directly without appealing KLR algebras, by using a similar formula as
(5.15.3) for $\ul\CB$.   
   
\par\bigskip

\par\vspace{1.5cm}
\noindent
Y. Ma \\
School of Mathematical Sciences, Tongji University \\ 
1239 Siping Road, Shanghai 200092, P.R. China  \\
E-mail: \verb|1631861@tongji.edu.cn|

\par\vspace{0.5cm}
\noindent
T. Shoji \\
School of Mathematica Sciences, Tongji University \\ 
1239 Siping Road, Shanghai 200092, P.R. China  \\
E-mail: \verb|shoji@tongji.edu.cn|

\par\vspace{0.5cm}
\noindent
Z. Zhou \\
School of Mathematical Sciences, Tongji University \\ 
1239 Siping Road, Shanghai 200092, P.R. China  \\
E-mail: \verb|forza2p2h0u@163.com|

\end{document}